\newcounter{thm}[section]
\newcounter{appen}
\newtheorem{theor}[thm]{Theorem}
\newtheorem{cor}[thm]{Corollary}
\newtheorem{proposition}[thm]{Proposition}
\newtheorem{assumption}[appen]{Assumption}
\newtheorem{lemma}[appen]{Lemma}
\newenvironment{proof}[1][Proof]{\textbf{#1.} }{\ \rule{0.5em}{0.5em}}
\begin{document}

\title{A semiparametric single-index estimator for a class of estimating equation models}
\author{Marian Hristache\thanks{ CREST (Ensai), email: marian.hristache@ensai.fr} \ \ \
Weiyu Li\thanks{Corresponding author. CREST (Ensai), email: liweiyu84@gmail.com} \ \ \
 Valentin Patilea\thanks{CREST (Ensai), email: valentin.patilea@ensai.fr. Valentin Patilea gratefully acknowledges support from
the research program New Challenges for New Data of Genes, LCL and Fondation de Risque.}}
\date{}
\maketitle

\begin{abstract}
We propose a two-step pseudo-maximum likelihood procedure for semiparametric single-index regression models where the conditional variance is a known function of the regression and an
additional parameter. The Poisson single-index regression with multiplicative unobserved heterogeneity is an example of such models. Our procedure is based on linear exponential densities with nuisance parameter. The pseudo-likelihood criterion we use contains a nonparametric estimate of the index regression and therefore a rule for choosing the smoothing parameter is needed. We propose an automatic and natural rule based on the joint maximization of the pseudo-likelihood with respect to the index parameter and the smoothing parameter. We derive the asymptotic properties of the semiparametric estimator of the index parameter and the asymptotic behavior of our `optimal' smoothing parameter. The finite sample performances of our methodology are analyzed using simulated and real data.

\textbf{Keywords:} semiparametric pseudo-maximum likelihood, single-index model, linear exponential densities, bandwidth selection.

\end{abstract}

\vskip -1.5cm

\vskip -1.5cm

\newpage

\section{Introduction}

\setcounter{equation}{0}


In this paper we consider semiparametric models defined by conditional mean and conditional variance estimating equations. Models defined by estimating equations for the first and second order conditional moments are widely used in applications. See, for instance, Ziegler (2011) for a recent reference. Here we consider a model that extends the framework considered by Cui, H\"{a}rdle and Zhu (2011).

To provide some insight on the type of models we study, consider the following semiparametric extension of the classical Poisson regression model with unobserved heterogeneity: the observed variables are $\left( Y,Z^{T}\right) ^{T}$ where $Y$ denotes the count variable and $Z$ is
the vector of $d$ explanatory variables. Let $r\left( t;\theta \right) =E \left( Y\mid Z^{T}\theta =t\right) .$ We assume that there exists $\theta_{0}\in \mathbb{R}^{d}$ such that
\begin{equation*}
E\left( Y\mid Z\right) =E\left( Y\mid Z^{T}\theta _{0}\right) = r\left( Z^{T}\theta _{0};\theta _{0}\right) .
\end{equation*}
The parameter $\theta _{0}$ and the function $r$ are unknown. Given $Z$ and an unobserved error term $\varepsilon ,$ the variable $Y$ has a Poisson law of mean $r\left( Z^{T}\theta _{0};\theta _{0}\right) \varepsilon .$ If $E \left(  \varepsilon \mid Z\right)  =1$ and $Var\left(  \varepsilon \mid Z \right) =\sigma ^{2},$ then
\begin{align}
Var\left( Y\mid Z\right) &=Var\left( E\left(  Y\mid Z,\varepsilon \right) \mid Z\right) +E\left( Var\left( Y\mid Z,\varepsilon \right)  \mid Z\right)
\notag \\
&=r\left( Z^{T}\theta _{0};\theta _{0}\right) \left[ 1+\sigma ^{2}r\left( Z^{T}\theta _{0};\theta _{0}\right) \right] .  \label{mom2}
\end{align}
This model is a semiparametric single-index regression model (e.g., Powell, Stock and Stoker (1989), Ichimura (1993), H\"{a}rdle, Hall and Ichimura (1993), Sherman (1994b)) where a second order conditional moment is specified as  a nonlinear function of the conditional mean and an additional unknown parameter. This extends the framework of Cui, H\"{a}rdle and Zhu (2011) where the conditional variance of the response is proportional to a given function of the conditional mean.

Our first contribution is to propose a new semiparametric estimation procedure for single-index regression which  incorporates the additional information on the conditional variance of $Y$.
For this we extend the
quasi-generalized pseudo maximum likelihood method introduced by Gouri\'{e}roux, Monfort and Trognon (1984a, 1984b) to a semiparametric framework. More precisely, we propose to estimate $\theta _{0}$ and the function $r(\cdot)$ through a two-step pseudo-maximum likelihood (PML) procedure based on \emph{linear exponential families} \emph{with nuisance parameter} densities. Such densities are parameterized by the mean $r$ and a nuisance parameter that can be recovered from the variance. Although we use a likelihood type criterion, no conditional distribution assumption on $Y$ given $Z$ is required for deriving the asymptotic results.

As an example of application of our procedure consider the case where $Y$ is a count variable. First, write the Poisson likelihood where the function $r(\cdot)$ is replaced by a kernel estimator and maximize this likelihood with respect to $\theta $\ to obtain a semiparametric PML estimator of $\theta _{0}$. Use this estimate and the variance formula (\ref{mom2}) to deduce a consistent moment estimator of $\sigma ^{2}.$ In a second step, estimate $\theta _{0}$ through a semiparametric Negative Binomial PML where $r$ is again replaced by a kernel estimator and the
variance parameter of the Negative Binomial is set equal to the estimate of $\sigma ^{2}.$ Finally, given the second step estimate of $\theta _{0}$, build a kernel estimator for the regression $r(\cdot)$. For simplicity, we use a Nadaraya-Watson estimator to estimate $r(\cdot)$. Other smoothers like local polynomials could be used at the expense of more intricate technical
arguments.

The occurrence of a nonparametric estimator in a pseudo-likelihood criterion requires a rule for the smoothing parameter. While the semiparametric index regression literature contains a large amount of contributions on how to estimate an index, there are much less results and practical solutions on the choice of the smoothing parameter. Even if the smoothing parameter does
not influence the asymptotic variance of a semiparametric estimator of $\theta_0$, in practice the estimate of $\theta_0$ and of the regression function may be sensitive to the choice of the smoothing parameter.

Another contribution of this paper is to propose an automatic and natural choice of the smoothing parameter used to define the semiparametric estimator. For this, we extend the approach introduced by H\"{a}rdle, Hall and Ichimura (1993) (see also Xia and Li (1999), Xia, Tong and Li (1999) and Delecroix, Hristache and Patilea (2006)). The idea is to maximize the pseudo-likelihood simultaneously in $\theta $ and the smoothing parameter, that is the bandwidth of the kernel estimator. The bandwidth is allowed to belong to a large range between $n^{-1/4}$ and $n^{-1/8}$. In some sense, this approach considers the bandwidth an auxiliary parameter for which the pseudo-likelihood may provide an estimate. Using a suitable decomposition of the pseudo-log-likelihood we show that such a joint maximization is asymptotically equivalent to separate maximization of a purely parametric (nonlinear) term with respect to $\theta $ and minimization of a weighted (mean-squared) cross-validation function with respect to the bandwidth. The weights of this cross-validation function are given by the second order derivatives of the pseudo-log-likelihood with respect to $r$. We show that the rate of our `optimal' bandwidth is $n^{-1/5}$, as expected for twice differentiable regression functions.

The paper is organized as follows. In section \ref{metoda} we introduce a class of semiparametric PML estimators based on linear exponential densities with nuisance parameter and we provide a natural bandwidth choice. Moreover, we present the general methodology used for the asymptotics. Section \ref{rezultat} contains the asymptotic results. A bound for the variance of our semiparametric PML estimators is also derived.
In section \ref{twostep} we use the semiparametric PML estimators to define a two-step procedure that can be applied in single-index regression models where an additional variance condition like (\ref{mom2}) is specified. Section \ref{simulsec} examines the finite-sample properties of our procedure via Monte Carlo simulations. We compare the performances of a two-step generalized
least-squares with those of a Negative Binomial PML in a Poisson single-index regression model with multiplicative unobserved heterogeneity. Even if the two procedures considered lead to asymptotically equivalent estimates, the latter procedure seems preferable in finite samples. An
application to real data on the frequency of recreational trips (see Cameron and Trivedi (2013), page 246) is also provided.  Section \ref{concl} concludes the paper. The technical proofs are postponed to the Appendix.

\section{Semiparametric PML with nuisance parameter}

\setcounter{equation}{0} \label{metoda}

Consider that the observations $\left( Y_{1},Z_{1}^{T}\right)^{T},...,\left( Y_{n}, Z_{n}^{T} \right) ^{T}$ are independent copies of the random vector $\left( Y,Z^{T}\right) ^{T}\in \mathbb{R} \times \mathbb{R}^{d}.$ Assume that there exists $\theta _{0}\in \mathbb{R}^{d}$, unique up to a scale normalization factor,{\Large \ }such that the single-index model (SIM)
condition
\begin{equation}
E\left(  Y\mid Z \right)  =E\left(  Y\mid Z^{T}\theta _{0}\right)  =r\left( Z^{T}\theta _{0};\theta _{0}\right)   \label{sim}
\end{equation}
holds. In this paper, we focus on single-index models where the conditional second order moment of $Y$ given $Z$ is a known function of $E\left[ Y\mid Z \right] $ and of a nuisance parameter. To be more precise, in the model we consider,
\begin{equation} \label{var1}
Var\left( Y\mid Z\right) =g\left( E\left( Y\mid Z\right) ,\alpha _{0}\right) =g\left( r\left( Z^{T}\theta _{0};\theta _{0}\right) ,\alpha _{0}\right) ,
\end{equation}
for some real value $\alpha _{0}$. The function $g\left( \cdot ,\cdot \right) $ is known and, for each $r,$ the map $\alpha \rightarrow g\left( r,\alpha \right) $ is one-to-one. Our framework is slightly more general that the one considered by Cui, H\"{a}rdle and Zhu (2011) where the conditional variance of $Y$ given $Z$ is a given function of the conditional mean of $Y$ given $Z$ multiplied by  an unknown constant.

To estimate the parameter of interest $\theta _{0}$ in a model like (\ref{sim})-(\ref{var1}), we propose a semiparametric PML procedure based on linear exponential families with nuisance parameter. The density used to build the pseudo-likelihood is taken with mean and variance equal to $r$ and $g(r,\alpha )$, respectively. In this section we suppose that an estimator of the nuisance parameter is given. In section \ref{twostep} we show how to build such an estimator using
a preliminary estimate of $\theta _{0}$ and condition (\ref{var1}).

\subsection{Linear exponential families with nuisance parameter}

Gouri\'{e}roux, Monfort and Trognon (1984a) introduced a class of densities, with respect to a given measure $\mu$, called linear exponential family with nuisance parameter (LEFN) and defined as
\begin{equation*}
l\left( y\mid r,\alpha \right) =\exp \left[ B\left( r,\alpha \right) +C\left( r,\alpha \right) y+D\left( y,\alpha \right) \right] ,
\end{equation*}
where $\alpha $ is the nuisance parameter. Since the dominating measure $\mu$ need not be Lebesgue measure, the law defined by $l$ is not necessarily continuous. The functions $B\left( \cdot ,\cdot \right) $ and $C\left( \cdot ,\cdot \right) $ are such that the expectation of the corresponding law is $r $ while the variance is $\left[ \partial_{r} C\left( r,\alpha \right) \right] ^{-1}.$ ($\partial_{r}$ denotes the derivative with respect to the argument $r.$) Recall that for any given $\alpha ,$ the following identity holds:
\begin{equation*}
\partial _{r}B\left( r,\alpha \right) +\partial _{r}C\left( r,\alpha \right) r\equiv 0.
\end{equation*}
If $\alpha $ is fixed, a LEFN becomes a linear exponential family (LEF) of densities. Gouri\'{e}roux, Monfort and Trognon (1984a, 1984b) used LEFN densities to define a two-step PML procedure in nonlinear regression models where a specification of the conditional variance is given. Herein, we extend their approach to a semiparametric framework.

In the case of the SIM defined by equation (\ref{mom2}), the conditional variance is given by $g\left( r,\alpha \right) =r\left( 1+\alpha r\right) $ with $r$ and $\alpha >0.$ In this case take
\begin{equation*}
B\left( r,\alpha \right) =-\frac{1}{\alpha }\ln \left( 1+\alpha r\right) \quad \quad \text{and}\quad \quad C\left( r,\alpha \right) =\ln \frac{r}{1+\alpha r},
\end{equation*}
which define a Negative Binomial distribution of mean $r$ and variance $r\left( 1+\alpha r\right) $. Note that the limit case $\alpha =0$ corresponds to a Poisson distribution. As another example, consider $g\left( r,\alpha \right) =r^{2}/\alpha $ with $r$ and $\alpha >0.$ Now, take the LEFN density given by $B\left( r,\alpha \right) =-\alpha \ln r$ and $C\left( r,\alpha \right) =-\alpha /r,$ which is the density of a gamma law of mean $r $ and variance $r^{2}/\alpha $.

\subsection{The semiparametric estimator}

In order to define our semiparametric PML estimator in the presence of a nuisance parameter let us introduce some notation: given $\left\{ c_{n}\right\} ,$ a sequence of numbers growing slowly to infinity (e.g., $c_{n}=\ln n$), let
\begin{equation*}
\mathcal{H}_{n}=\left\{ h:\,\,c_{n}\,n^{-1/4}\leq h\leq c_{n}^{-1}n^{-1/8}\right\}
\end{equation*}
be the range from which the `optimal' bandwidth will be chosen. Define the set $\Theta _{n} =\left\{ \theta :\left\| \theta -\theta _{0}\right\| \leq d_{n}\right\} $, $n\geq 1,$ with $\left\{ d_{n}\right\} $ some sequence decreasing to zero.

Let $\alpha ^{\ast }$ be some real value of the nuisance parameter$.$ Typically, $\alpha ^{\ast }=\alpha _{0}$ if the conditional variance formula (\ref{var1}) is correctly specified.\ Otherwise, $\alpha ^{\ast }$ is some pseudo-true value of the nuisance parameter. Suppose that a sequence $\left\{ \widetilde{\alpha }_{n}\right\} $ such that $\widetilde{\alpha }_{n}\rightarrow \alpha ^{\ast }$, in probability, is given. Set\footnote{Herein, we focus on $\psi \left( y,r;\alpha \right) =\ln l\left( y\mid r,\alpha \right) $ where $l\left( y\mid r,\alpha \right)
=\exp \left[ B\left( r,\alpha \right) +C\left( r,\alpha \right) y+D\left( y,\alpha \right) \right] $ is a LEFN density. However, other functions $\psi \left( y,r;\alpha \right) $ having the required properties can be considered (see Appendix \ref{assu}).}
\begin{equation*}
\psi \left( y,r;\alpha \right) =\ln l\left( y\mid r,\alpha \right)
\end{equation*}%
with $l\left( y\mid r,\alpha \right) $ the LEFN density of expectation $r$ and nuisance parameter $\alpha $. Define the semiparametric PML estimator in the presence of a nuisance parameter and the optimal bandwidth as
\begin{equation}
\left( \widehat{\theta },\widehat{h}\right) =\ \underset{\theta \in \Theta _{n},\,h\in \mathcal{H}_{n}}{\arg \max }\ \frac{1}{n}\sum\limits_{i=1}^{n} \psi \left( Y_{i}, \, \hat{r}_{h}^{i}\left( Z_{i}^{T}\theta ;\theta \right) ;\, \widetilde{\alpha }_{n}\right) \,\tau _{n}(Z_{i}),  \label{deeff}
\end{equation}
where
\begin{equation*}
\hat{r}_{h}^{i}\left( t;\theta \right) =\frac{\frac{1}{n-1} \sum\limits_{j\neq i}Y_{j}\ K_{h}\left( t-Z_{j}^{T}\theta \right) \ }{\frac{1}{n-1}\sum\limits_{j\neq i}K_{h}\left( t-Z_{j}^{T}\theta \right) \ }=:\frac{\widehat{\gamma }_{h}^{i}\left( t;\theta \right) } {\widehat{f}_{h}^{i}\left( t;\theta \right) }
\end{equation*}
denotes the leave-one-out version of the Nadaraya-Watson estimator of the regression function
\begin{equation*}
r\left( t;\theta \right) =E\left( Y|Z^{T}\theta =t\right) =: \frac{\gamma \left( t;\theta \right) }{f\left( t;\theta \right) },
\end{equation*}
with $f\left( \cdot ;\theta \right) $ the density of $Z^{T}\theta .$ The function $K\left( \cdot \right) $ is a second order kernel function and $K_{h}\left( \cdot \right) $ stands for $K\left( \cdot /h\right) /h,$ where $h$ is the bandwidth. $\tau _{n}(\cdot )$ denotes a trimming function. If the sequence $\widetilde{\alpha }_{n}$ is constant or $\psi $ does not depend on $\alpha ,$ equation (\ref{deeff}) defines a semiparametric PML based on a LEF density.

A trimming is designed to keep the density estimator $\widehat{f}_{h}^{i}$ away from zero in computations and it is usually required for analyzing the asymptotic properties of the nonparametric regression estimator and of the `optimal' bandwidth. The practical purpose of a trimming recommends a data-driven device like $I_{\left\{ z:\,\widehat{f}_{h}^{i}\left(
z^{T}\theta ;\theta \right) \geq c\right\} }(\cdot )$, with some fixed $c>0$ . (Herein, $I_{A}\left( \cdot \right) $ denotes the indicator function of the set $A.$) However, to ensure consistency with such a trimming, one should require in addition that
\begin{equation*}
\theta _{0}=\arg \max_{\theta }E\left[ \psi \left( Y,r_{\theta }\left( Z^{T}\theta \right) \right) \,I_{\left\{ z:\,f\left( z^{T}\theta ;\theta \right) \geq c\right\} }(Z)\right] .
\end{equation*}
Meanwhile, a trimming like $I_{\left\{ z:\,f\left( z^{T}\theta _{0};\theta_{0}\right) \geq c\right\} }(\cdot )$ is easier to handle in theory. Here, we consider
\begin{equation}
\tau _{n}(\cdot )=I_{\left\{ z:\,\widehat{f}_{h_{n}}^{i}\left( z^{T}\theta _{n};\,\theta _{n}\right) \geq c\right\} }(\cdot )  \label{deeff2}
\end{equation}
with $\theta _{n}\in \Theta _{n},$ $n\geq 1$, a sequence with limit $\theta _{0}$ and $h_{n},$ $n\geq 1$, a sequence of preliminary bandwidths such that $ n^{\varepsilon }h_{n}\rightarrow 0$ and $n^{1/2-\varepsilon }h_{n}\rightarrow \infty $ for some $0<\varepsilon <1/2.$ The trimming procedure we propose represents an appealing compromise between the theory and the applications.\ On one hand, it is easy to implement. On the other hand, we show below that, in a certain sense, our trimming is asymptotically equivalent to the fixed trimming $I_{\left\{ z:\,f\left( z^{T}\theta _{0};\theta _{0}\right) \geq c\right\} }(\cdot )$ and this fact greatly simplifies the proofs. We prove this equivalence under two types of assumptions: either i) $Z$ is bounded and $\theta _{n}-\theta _{0}=o\left( 1\right) $, or ii) $E\left[ \exp \left( \lambda \left\| Z\right\| \right) \right] <\infty ,$ for some $\lambda >0,$ and $\theta _{n}-\theta _{0}=o\left( 1/\ln n\right) .$ To be more precise, define $A=\left\{ z:f\left( z^{T}\theta _{0};\theta _{0}\right) \geq c\right\} \subset \mathbb{R}^{d}$ and $A^{\delta }=\left\{ z:\left| f\left( z^{T}\theta _{0};\theta _{0}\right) -c\right| \leq \delta \right\} ,$ $\delta >0.$ By little algebra, for all $\theta \in \Theta _{n},$ $h$ and $i,$
\begin{equation*}
\left| I_{\left\{ z:\,\widehat{f}_{h}^{i}\left( z^{T}\theta ;\theta \right) \geq c\right\} }(Z_{i})-I_{A}(Z_{i})\right| \leq I_{A^{\delta }}(Z_{i})+I_{(\delta ,\infty )}(G_{n}),
\end{equation*}
where
\begin{equation*}
G_{n}=\max_{1\leq i\leq n}\,\sup_{\theta \in \Theta _{n},\,h}\,\left| \widehat{f}_{h}^{i}\left( Z_{i}^{T}\theta ;\theta \right) -f\left( Z_{i}^{T}\theta _{0};\theta _{0}\right) \right| .
\end{equation*}
Let
\begin{equation*}
\widehat{S}\left( \theta ,h;\widetilde{\alpha }_{n},\overline{A}\right) = \dfrac{1}{n}\sum\limits_{i=1}^{n}\psi \left( Y_{i},\hat{r}_{h}^{i}\left( Z_{i}^{T}\theta ;\theta \right) ;\widetilde{\alpha }_{n}\right) \ I_{\overline{A}}\left( Z_{i}\right) \
\end{equation*}
with $\overline{A}=A$ or $A^{\delta }.$ Without loss of generality, consider that $\psi \left( \cdot ,\cdot \,;\cdot \right) \leq 0.$ (Since $\psi $ is the logarithm of a LEFN density$,$ for any given $y$ and $\alpha $, the map $r\rightarrow \psi \left( y,r\,;\alpha \right) $ attains its maximum at $r=y;$ thus, up to a translation with a function depending only on $y$ and $\alpha ,
$ we may consider $\psi \leq 0.$) In this case we have
\begin{align}
	&\left| \frac{1}{n}\sum\limits_{i=1}^{n}\psi \left( Y_{i},\hat{r}_{h}^{i}\left( Z_{i}^{T}\theta ;\theta \right) ;\widetilde{\alpha }_{n}\right) I_{\left\{ z:\, \widehat{f}_{h_{n}}^{i}\left( z^{T}\theta_{n};\theta _{n}\right) \geq c\right\} }(Z_{i})-\widehat{S}\left( \theta ,h; 	\widetilde{\alpha }_{n},A\right) \right| \hspace*{1cm}  \label{ikj} \\
	& \hspace*{2.5cm} \leq -\widehat{S}\left( \theta ,h; \widetilde{\alpha }_{n},A^{\delta }\right) -\frac{I_{(\delta ,\infty )}(G_{n})}{n}\sum\limits_{i=1}^{n}\psi \left( Y_{i}, \hat{r}_{h}^{i}\left( Z_{i}^{T}\theta ;\theta \right) ;\widetilde{\alpha }_{n}\right) .  \notag
\end{align}
We show that $\widehat{S}\left( \theta ,h;\alpha ,A^{\delta }\right) =o_{P}(\widehat{S}\left( \theta ,h;\alpha ,A\right) ),$ uniformly over $\Theta_{n}\times \mathcal{H}_{n}$ and uniformly in $\alpha ,$ provided that $\delta \rightarrow 0$ and $P\left(  f\left( Z^{T}\theta _{0};\theta
_{0}\right) =c \right)  =0.$ On the other hand, we prove that $P\left( G_{n}>\delta \right) \rightarrow 0,$ provided that $\delta \rightarrow 0$ slowly enough and $h\rightarrow 0$ faster than $n^{\varepsilon }$ and slower than $n^{1/2-\varepsilon },\,$for some $0<\varepsilon <1/2$. (See Lemma \ref{idic} in the appendix; in that lemma we distinguish two types of assumptions depending on whether $Z$ is bounded or not.)

Deduce that $\left( \widehat{\theta },\widehat{h}\right) $ is asymptotically equivalent to the maximizer of $\widehat{S}\left( \theta ,h;\widetilde{\alpha }_{n},A\right) $ over $\Theta _{n}\times \mathcal{H}_{n}.$ Therefore, hereafter, we simply write $\widehat{S}\left( \theta ,h;\widetilde{\alpha }_{n}\right) $ instead of $\widehat{S}\left( \theta ,h; \widetilde{\alpha}_{n}, A\right) $ and we consider
\begin{equation} \label{infeasi}
\left( \widehat{\theta },\widehat{h}\right) =\ \underset{\theta \in \Theta _{n},\,h\in \mathcal{H}_{n}} {\arg \max }\widehat{S}\left( \theta ,h;\widetilde{\alpha }_{n}\right) .
\end{equation}

\subsection{Methodology}

The semiparametric pseudo-log-likelihood $\widehat{S}\left( \theta ,h; \widetilde{\alpha }_{n}\right) $ can be split into a purely parametric (nonlinear) part $\widetilde{S}\left( \theta ;\widetilde{\alpha }_{n}\right) $, a purely nonparametric one $T(h;\alpha ^{\ast })$ and a reminder term $R(\theta ,h;\widetilde{\alpha }_{n})$, where
\begin{align}
\widetilde{S}\left( \theta ;\widetilde{\alpha }_{n}\right) &=\dfrac{1}{n} \sum\limits_{i=1}^{n}\left[ \psi \left( Y_{i},r\left( Z_{i}^{T}\theta ;\theta \right) ;\widetilde{\alpha }_{n}\right) -\psi \left( Y_{i},r\left( Z_{i}^{T}\theta _{0};\theta _{0}\right) ;\alpha ^{\ast }\right) \right] I_{A}\left( Z_{i}\right) ,  \label{deco} \\
T\left( h;\alpha ^{\ast }\right) &=\dfrac{1}{n}\sum\limits_{i=1}^{n}\psi \left( Y_{i},\hat{r}_{h}^{i}\left( Z_{i}^{T}\theta _{0};\theta _{0}\right) ;\alpha ^{\ast }\right) I_{A}\left( Z_{i}\right) ,  \notag \\
R\left( \theta ,h;\widetilde{\alpha }_{n}\right) &=\dfrac{1}{n} \sum\limits_{i=1}^{n}\left[ \psi \left( Y_{i},\hat{r}_{h}^{i}\left( Z_{i}^{T}\theta ;\theta \right) ;\widetilde{\alpha }_{n}\right) -\psi \left( Y_{i},r\left( Z_{i}^{T}\theta ;\theta \right) ;\widetilde{\alpha } _{n}\right) \right] I_{A}\left( Z_{i}\right)  \notag \\
& \hspace*{0.5cm} -\dfrac{1}{n}\sum\limits_{i=1}^{n}\left[ \psi \left( Y_{i},\hat{r} _{h}^{i}\left( Z_{i}^{T}\theta _{0};\theta _{0}\right) ;\alpha ^{\ast}\right) -\psi \left( Y_{i},r\left( Z_{i}^{T}\theta _{0};\theta _{0}\right) ;\alpha ^{\ast }\right) \right] I_{A}\left( Z_{i}\right)  \notag
\end{align}
(see H\"{a}rdle, Hall and Ichimura (1993) for a slightly different splitting). Given this decomposition, the simultaneous optimization of $\widehat{S}\left( \theta ,h;\widetilde{\alpha }_{n}\right) $ is asymptotically equivalent to separately maximizing $\widetilde{S}\left( \theta ;\widetilde{\alpha }_{n}\right) $ with respect to $\theta $ and $T\left( h;\alpha ^{\ast }\right) $ with respect to $h$, provided that $R\left( \theta ,h;\widetilde{\alpha }_{n}\right) $ is sufficiently small.\

A key ingredient for proving that $R\left( \theta ,h;\widetilde{\alpha } _{n}\right) $ is negligible with respect to $\widetilde{S}\left( \theta ; \widetilde{\alpha }_{n}\right) $ and $T\left( h;\alpha ^{\ast }\right) ,$ uniformly in $\left( \theta ,h\right) \in \Theta _{n}\times \mathcal{H}_{n}$ and for any $\left\{ \widetilde{\alpha }_{n}\right\} ,$ is represented by the orthogonality conditions
\begin{equation}
E\left[ \partial _{2}\psi \left( Y,\;r\left( Z^{T}\theta _{0};\theta _{0}\right) ;\alpha \right) \mid Z\right] =0  \label{orth1}
\end{equation}
and
\begin{equation} \label{orth2}
E\left[ \partial _{\theta }\partial _{2}\psi \left( Y,\;r\left( Z^{T}\theta _{0};\theta _{0}\right) ;\alpha \right) \mid Z^{T}\theta _{0}\right] =0,
\end{equation}
that must hold for any $\alpha ,$ where $\partial _{2}$ denotes the derivative with respect to the second argument of $\psi \left( \cdot ,\cdot ;\cdot \right) $ and $\partial _{\theta }$ is the derivative with respect to all occurrences of $\theta ,$ that is given $y$, $z$ and $\alpha ,$%
\begin{equation*}
\partial _{\theta }\partial _{2}\psi \left( y,\;r\left( z^{T}\theta _{0};\theta _{0}\right) ;\alpha \right) =\frac{\partial }{\partial \theta } \left. \partial _{2}\psi \left( y,\;r\left( z^{T}\theta ;\theta \right) ;\alpha \right) \right| _{\theta =\theta _{0}}
\end{equation*}
(see also Sherman (1994b) and Delecroix, Hristache and Patilea (2006) for similar conditions). If
\begin{equation*}
\psi \left( y,r;\alpha \right) =\ln l\left( y\mid r,\alpha \right) =B\left( r,\alpha \right) +C\left( r,\alpha \right) y+D\left( y,\alpha \right) ,
\end{equation*}
with $\partial _{r}B\left( r,\alpha \right) +\partial _{r}C\left( r,\alpha \right) r\equiv 0,$ then $\partial _{2}\psi \left( y,r;\alpha \right) =\partial _{r}C\left( r,\alpha \right) \left( y-r\right) $ and thus (\ref{orth1}) is a consequence of the SIM condition (\ref{sim}). To check the
second orthogonality condition note that
\begin{equation*}
E\left[ \partial _{22}^{2}\psi \left( Y,\;r\left( Z^{T}\theta _{0};\theta _{0}\right) ;\alpha \right) \mid Z\right] =E\left[ \partial _{22}^{2}\psi \left( Y,\;r\left( Z^{T}\theta _{0};\theta _{0}\right) ;\alpha \right) \mid Z^{T}\theta _{0}\right]
\end{equation*}
and
\begin{equation*}
E\left[ \partial _{\theta }r\left( Z^{T}\theta _{0};\theta _{0}\right) \mid Z^{T}\theta _{0}\right] =E\left[ r^{\prime }\left( Z^{T}\theta _{0};\theta _{0}\right) \left( Z-E\left[ Z\mid Z^{T}\theta _{0}\right] \right) \mid Z^{T}\theta _{0}\right] ,
\end{equation*}
where $r^{\prime }(\cdot ;\theta _{0})$ is the derivative of $r(\cdot ;\theta _{0}).$ The last identity is always true under the SIM condition (e.g., Newey~(1994), page 1358)$.$ Let us point out that conditions (\ref{orth1})-(\ref{orth2}) hold even if the variance condition (\ref{var1}) is
misspecified.

Since $R\left( \theta ,h;\widetilde{\alpha }_{n}\right) $ is negligible with respect to $\widetilde{S} \left( \theta ;\widetilde{\alpha }_{n}\right) $ and $T\left( h;\alpha ^{\ast }\right) $ does not contain the parameter of interest, the asymptotic distribution of $\widehat{\theta }$ will be obtained by standard arguments used for $M-$estimators in the presence of nuisance parameters applied to the objective function $\widetilde{S}\left( \theta ;\widetilde{\alpha }_{n}\right) $. We deduce that $\widehat{\theta }$ behaves as follows: i) if the SIM condition (\ref{sim}) holds and $\widetilde{\alpha }_{n}-\alpha ^{\ast }=O_{P}\left( 1\right) ,$ for some $\alpha ^{\ast },$ then $\widehat{\theta }$ is asymptotically normal; ii) if
SIM condition holds, the conditional variance (\ref{var1}) is correctly specified and $\widetilde{\alpha }_{n}-\alpha _{0}=O_{P}\left( 1\right) ,$ then $\widehat{\theta }$ is asymptotically normal and it has the lowest variance among the semiparametric PML estimators based on LEF densities. In any case, the asymptotic distribution of $\sqrt{n}(\widehat{\theta }-\theta
_{0})$ does not depend on the choice of $\widetilde{\alpha }_{n}.$ Let us point out that in our framework we only impose $\widetilde{\alpha }_{n}$ convergent in probability without asking a rate of convergence $O_{P}\left( 1/\sqrt{n}\right) ,$ as it is usually supposed for $M-$estimation in the presence of nuisance parameters. This because the usual orthogonality condition $E \left[ \partial _{\alpha }\partial _{\theta }\psi \left( Y,r\left( Z^{T}\theta _{0};\theta _{0}\right) ;\alpha \right) \right] =0$ is true for any $\alpha ,$ provided that $\psi \left( y,r;\alpha \right) =\ln l\left( y\mid r,\alpha \right) $ with $l\left( y\mid r,\alpha \right) $ a LEFN density$.$ Indeed, we have
\begin{align*}
& \hskip-0.8cm E\left[ \partial _{\alpha }\partial _{\theta }\psi \left( Y,r\left( Z^{T}\theta _{0};\theta _{0}\right) ;\alpha \right) \right] \\
&=E\left[ \partial _{\alpha }\partial _{r}\psi \left( Y,r\left( Z^{T}\theta_{0};\theta _{0}\right) ;\alpha \right) \partial _{\theta }r\left( Z^{T}\theta _{0};\theta _{0}\right) \right] \\
&=E\left[ E\left\{ \partial _{\alpha } \partial _{r} B\left( r\left( Z^{T}\theta _{0};\theta _{0}\right) ;\alpha \right) + \partial _{\alpha } \partial _{r} C\left( r\left( Z^{T}\theta _{0};\theta _{0}\right) ;\alpha \right) Y\mid Z\right\} \partial _{\theta }r\left( Z^{T}\theta _{0};\theta _{0}\right) \right] \\
&=0
\end{align*}
because $E\left(  Y\mid Z\right)  =r\left( Z^{T}\theta _{0};\theta _{0}\right) $ and $\partial _{\alpha } \partial _{r} B\left( r,\alpha \right) + \partial _{\alpha } \partial _{r} C\left( r,\alpha \right) r\equiv 0,$ for any $\alpha . $

For the bandwidth $\widehat{h}$ we obtain an asymptotic equivalence with a theoretical\ `optimal' bandwidth minimizing $-T\left( h;\alpha ^{\ast}\right) ,$ that is we prove that the ratio of the two bandwidths converges to one, in probability. Remark that $-T\left( h;\alpha ^{\ast }\right) $ is a kind of $\psi -$CV (cross validation) function.\ It can be shown that, up to constant additive terms, $-T\left( h;\alpha ^{\ast }\right) $ is asymptotically equivalent to a weighted (mean-squared) CV function. When $\psi \left( y,r;\alpha \right) =-\left( y-r\right) ^{2},$ the function $-T\left( h;\alpha ^{\ast }\right) $ is the usual CV function that one would use for choosing the bandwidth for the Nadaraya-Watson estimator of $E\left( Y\mid Z^{T}\theta _{0}\right) $. By extension of classical results for nonparametric regression, it can be proved that the rate of the theoretical `optimal' bandwidth minimizing $-T\left( h;\alpha ^{\ast }\right) $ is $%
n^{-1/5}$ (see Lemma \ref{mise} in Appendix \ref{appconv}; see also H\"{a}rdle, Hall and Ichimura (1993) for the case $\psi \left( y,r;\alpha \right) =-\left( y-r\right) ^{2}$)$.$ Deduce that $\widehat{h}$ is also of order $n^{-1/5}.$

\quad

\subsection{Extensions}

\label{relax}

Given the model conditions (\ref{sim})-(\ref{var1}), the idea is to choose a LEFN density with mean $r$ and variance $g(r,\alpha )$ and to construct a semiparametric PML estimator given a preliminary estimate of the nuisance parameter $\alpha _{0}$. However, it may happen that no such LEFN density exists or that one prefers another type of LEFN densities. Then, the idea is
to reparametrize the conditional variance of $Y$ given $Z$. More precisely, we may consider
\begin{equation*}
l\left( y\mid r,\eta \right) =\exp \left[ B\left( r,\eta \right) +C\left( r,\eta \right) y+D\left( y,\eta \right) \right] ,
\end{equation*}
where $\eta $ stands for the nuisance parameter. Let $\Sigma =\Sigma (r,\eta )$ denote the variance of the law given by this density. Assume that for any given $r$, the map $\eta \rightarrow \Sigma (r,\eta )$ is one-to-one. In this case, in order to provide a LEFN density with variance $g(r,\alpha )$ it suffices to consider $l\left( y\mid r,\eta \right) $ with $\eta =\Sigma
^{-1}(r,g(r,\alpha ))$. For instance, if $g(r,\alpha)=r(1+\alpha r^{2})$, one may use a Negative Binomial density of mean $r$ and nuisance parameter $\alpha r$. Another solution is to consider a normal density of mean $r$ where the variance equal to $r(1+\alpha r^{2})$ plays the role of the
nuisance parameter. In this case, given an estimate of $r(1+\alpha r^{2})$, our semiparametric PML becomes a semiparametric generalized least-squares (GLS) procedure. Note that this example of function $g(r, \alpha)$ leads us to the situation where the nuisance parameter is replaced by a `nuisance' function of $r$ and some additional parameters.

At the expense of more complicated writings, our methodology can be extended to take into account the case of a `nuisance' function. More precisely, consider a more general pseudo-log-likelihood function $\psi \left( y,r;\Psi (r,g(r,\alpha ))\right) $ where $\Psi (\cdot ,\cdot )$ is a given real-valued function and $\alpha $ is the nuisance parameter. See also Gouri\'{e}roux, Monfort and Trognon (1984a). To define $(\,\widehat{\theta }, \widehat{h}\,),$ one replaces $\widetilde{\alpha }_{n}$ by $\Psi (\widehat{r}_{h_{n}}(Z_{i}^{T}\theta _{n};\theta _{n});\widetilde{\alpha}_{n})$ in equation (\ref{deeff}), where $(\theta _{n},\widetilde{\alpha } _{n})\rightarrow (\theta_{0},\alpha
^{\ast })$, in probability, for some $ \alpha ^{\ast },$ and $\widehat{r}_{h_{n}}(\cdot ;\theta _{n})$ is a Nadaraya-Watson estimator of the regression $r(\cdot ;\theta _{n}).$ The same type
of decomposition of the pseudo-log-likelihood criterion into a purely parametric part function of $\theta $
\begin{multline*}
\dfrac{1}{n}\sum\limits_{i=1}^{n}\left[ \psi \left(  Y_{i},r\left( Z_{i}^{T}\theta ;\theta \right) ;\Psi \left(  r\left( Z_{i}^{T}\theta_{n};\theta _{n}\right) ,g\left( r(Z_{i}^{T}\theta_{n},\theta _{n}), \widetilde{\alpha }_{n}\right) \right) \right)  \right.  \\
\left. -\psi \left(  Y_{i},r\left( Z_{i}^{T}\theta _{0};\theta _{0}\right) ;\Psi \left(  r\left( Z_{i}^{T}\theta _{0};\theta _{0}\right) ,g\left( r(Z_{i}^{T}\theta _{0},\theta _{0}),\alpha ^{\ast }\right) \right) \right)  \right] I_{A}\left( Z_{i}\right) ,
\end{multline*}
a purely nonparametric part function of $h$
\begin{equation*}
T\left( h;\alpha ^{\ast }\right) =\dfrac{1}{n}\sum\limits_{i=1}^{n}\psi \left(  Y_{i}, \hat{r}_{h}^{i}\left( Z_{i}^{T}\theta _{0};\theta _{0}\right) ;\Psi \left(  r\left( Z_{i}^{T}\theta _{0};\theta _{0}\right) ,g\left( r(Z_{i}^{T}\theta _{0},\theta _{0}),\alpha ^{\ast } \right) \right) \right) I_{A}\left( Z_{i}\right)
\end{equation*}
and a negligible reminder function of $\theta $ and $h$ can be used. For brevity, the details of this more general case are omitted. However, we sketch a quick argument that applies for the semiparametric GLS.\footnote{This semiparametric generalized least-squares procedure is a particular case for Picone and Butler (2000). However, they do not provide a bandwidth rule.}
Consider the semiparametric GLS criterion
\begin{equation*}
\widehat{S}\left( \theta ,h;\theta _{n},\widetilde{\alpha }_{n},h_{n}\right) =-\,\dfrac{1}{n} \sum\limits_{i=1}^{n}g\left( \widehat{r}_{h_{n}}(Z_{i}^{T} \theta _{n};\theta_{n}); \widetilde{\alpha }_{n}\right) ^{-1}\left[ Y_{i}- \hat{r}_{h}^{i}\left( Z_{i}^{T}\theta ;\theta \right) \right] ^{2}I_{A}\left( Z_{i}\right)
\end{equation*}
with $(\theta _{n},\widetilde{\alpha }_{n})\rightarrow (\theta _{0},\alpha^{\ast })$, in probability, and $h_{n},$ $n\geq 1,$ a sequence of bandwidths.\ Assume that
\begin{equation}
\max_{1\leq i\leq n}\left| g\left( \widehat{r}_{h_{n}}(Z_{i}^{T}\theta _{n};\theta _{n});\widetilde{\alpha }_{n}\right) -g\left( r(Z_{i}^{T}\theta _{0};\theta _{0});\alpha ^{\ast }\right) \right| I_{A}\left( Z_{i}\right) =o_{P}\left( 1\right)   \label{uunif}
\end{equation}
and $g\left( r(z^{T}\theta _{0};\theta _{0});\alpha ^{\ast }\right) I_{A}\left( z\right) $ stays away from zero.\ Then the GLS criterion $\widehat{S} \left( \theta ,h;\theta _{n},\widetilde{\alpha }_{n},h_{n}\right) $ is asymptotically equivalent to the infeasible GLS criterion
\begin{equation*}
-\,\dfrac{1}{n}\sum\limits_{i=1}^{n}\left[ Y_{i}-\hat{r}_{h}^{i}\left( Z_{i}^{T}\theta ;\theta \right) \right] ^{2}g\left( r(Z_{i}^{T}\theta _{0};\theta _{0});\alpha ^{\ast }\right)^{-1}I_{A} \left( Z_{i}\right) ,
\end{equation*}
that is we can decompose the two criteria in such way that, up to negligible reminders, they have exactly the same purely parametric and purely nonparametric parts. Finally, we apply the methodology\footnote{Notice that the trimming function $z\rightarrow I_{A}\left( z\right) $ with $A=\left\{ z:f\left( z^{T}\theta _{0};\theta _{0}\right) \geq c\right\} $ can be written as a function of $z^{T}\theta _{0}.$ In view of our proofs, it becomes obvious that the methodology described in the previous subsection remains valid if $I_{A}\left( Z_{i}\right) $ is multiplied by a function depending only on $Z_{i}^{T}\theta _{0}.$} described in the previous subsection with $\psi \left( y,r;\alpha \right) =-\left( y-r\right) ^{2}$ and the trimming $I_{A}\left( Z_{i}\right) $ multiplied by $g\left( r(Z_{i}^{T}\theta _{0};\theta _{0});\alpha ^{\ast }\right) ^{-1}.\;$In order to ensure condition (\ref{uunif}), it suffices to suppose that the map $\left( r,\alpha \right) \rightarrow g\left( r;\alpha \right) $ satisfies a Lipschitz condition and that $h_{n}$ is such that
\begin{equation*}
\max_{1\leq i\leq n}\left| \widehat{r}_{h_{n}}(Z_{i}^{T}\theta _{0};\theta _{0})-r(Z_{i}^{T}\theta _{0};\theta _{0})\right| I_{A}\left( Z_{i}\right) =o_{P}\left( 1\right)
\end{equation*}%
and $\max_{1\leq i\leq n}|\partial _{\theta }\widehat{r}_{h_{n}}\left( Z_{i}^{T}\theta ;\theta \right) |I_{A}\left( Z_{i}\right) $ is bounded in probability, uniformly with respect to $\theta $ in $o_{P}\left( 1\right) $ neighborhoods of $\theta _{0}.$ For instance, a bandwidth of order $n^{-1/5}$ satisfies these conditions (see Andrews (1995); see also Delecroix, Hristache and Patilea (2006)).

Other possible extensions of the framework we consider is to allow a multi-index regression and/or multivariate dependent variables. For instance, the SIM condition can be replaced by the multi-index condition
\begin{equation*}
E \left(  Y\mid Z \right)  =E \left(  Y\mid Z^{T}\theta _{0}^{1},...,Z^{T}\theta _{0}^{p}\right)
\end{equation*}
with $p$ smaller than the dimension of $Z,$ while the second order moment condition remains $Var\left( Y\mid Z\right) =g\left( E\left( Y\mid Z\right) ,\alpha _{0}\right) .$ On the other hand, for multivariate dependent variables one may consider PML estimation based on the multivariate normal or multivariate generalizations of Poisson, Negative Binomial distributions
(Johnson, Kotz and Balakrishnan (1997)). The decomposition of the pseudo-log-likelihood in $\widetilde{S}$, $T$ and $R$ as above can still be used for these cases but the detailed analysis of these extensions will be considered elsewhere.

\section{Asymptotic results}

\label{rezultat}

\setcounter{equation}{0}

In this section we obtain the asymptotic distribution for $\widehat{\theta }$ and the corresponding estimator of the regression function $r\left( t;\theta \right) =E\left[ Y\mid Z^{T}\theta =t\right] $ as well as the asymptotic behavior of $\widehat{h}$, with $(\widehat{\theta },\widehat{h})$ defined in (\ref{deeff}). A consistent estimator for the asymptotic variance matrix of $\widehat{\theta }$ is proposed$.$ Moreover, a lower bound for the asymptotic variance matrix of $\widehat{\theta }$ is derived.

For the identifiability of the parameter of interest $\theta _{0}$, hereafter fix its first component, that is $\theta _{0}=(1,\widetilde{\theta }_{0}^{T})^{T},$ $\widetilde{\theta }_{0}\in \mathbb{R}^{d-1}.$ Therefore, we shall implicitly identify a vector $\theta =(1,\widetilde{\theta }
^{T})^{T}$ with its last $d-1$ components and redefine the symbol $\partial _{\theta }$ as being the vector of the first order partial derivatives with respect to the last $d-1$ components of $\theta .$

Let $v\left( t;\theta \right) =Var\left( Y\mid X\theta =t\right) .$ If the SIM assumption and variance condition (\ref{var1}) hold, then $v\left( Z^{T}\theta _{0};\theta _{0}\right) =g\left( r\left( Z^{T}\theta _{0};\theta _{0}\right) ,\alpha _{0}\right) .$ For a given $\theta ,$ let $r^{\prime }\left( \cdot ;\theta \right) $ and $r^{\prime \prime }\left( \cdot ;\theta \right) $ denote the first and second order derivatives of the function $r\left( \cdot ;\theta \right) .$ Similarly, $f^{\,\prime }\left( \cdot ;\theta \right) $ is the derivative of $f^{\,\prime }\left( \cdot ;\theta \right) .$ Define\footnote{Note that $\partial _{22}^{2}\psi (y,r)=\partial _{rr}^{2}C(r,\alpha )\left(y-r\right) -\partial _{r}C(r,\alpha ).$ Thus, $-\partial _{r}C$ can be
replaced by $\partial _{22}^{2}\psi $ in the definition of the constants $C_{1}$ and $C_{2}.$}
\begin{align} \label{a1a2}
C_{1} &=-\,\dfrac{K_{1}^{2}}{4}\ E\left\{ \underset{_{\quad}} {\frac{1}{2}\ \partial _{r}C\left( r\left( Z^{T}\theta _{0};\theta _{0}\right) ;\alpha ^{\ast }\right)} \ \right. \    \\
&\qquad \qquad \qquad \times \left. \left[ r^{\prime \prime }\left( Z^{T}\theta _{0};\theta _{0}\right) +\frac{2\ r^{\prime }\left( Z^{T}\theta _{0};\theta _{0}\right) \ f^{\,\prime }\left( Z^{T}\theta _{0};\theta_{0}\right) }{f\left( Z^{T}\theta _{0};\theta _{0}\right) }\right] ^{2}\
I_{A}\left( Z\right) \right\}  \notag \\
C_{2} &=-\,K_{2}\ E\left\{ \frac{1}{2}\ \partial _{r}C\left( r\left( Z^{T}\theta _{0};\theta _{0}\right) ;\alpha ^{\ast }\right) \,\,\frac{1}{f\left( Z^{T}\theta _{0};\theta _{0}\right) }\ v\left( Z^{T}\theta_{0};\theta _{0}\right) \ I_{A}\left( Z\right) \right\} ,  \notag
\end{align}
with $K_{1}=\int u^{2}K\left( u\right) du$, $K_{2}=\int K^{2}\left( u\right) du,$ and consider
\begin{equation*}
h_{n}^{opt}\ =\ \underset{h}{\arg \max }\ \left( C_{1}h^{4}+C_{2}n^{-1}h^{-1}\right) =\left( C_{2}/4C_{1}\right) ^{1/5}n^{-1/5}.
\end{equation*}%
Define the $\left( d-1\right) \times \left( d-1\right) $ matrices
\begin{equation*}
I=E\left\{ \left[ \partial _{r}C\left( r\left( Z^{T}\theta _{0};\theta _{0}\right) ;\alpha ^{\ast }\right) \right] ^{2}v\left( Z^{T}\theta_{0};\theta _{0}\right) \partial _{\theta }r\left( Z^{T} \theta _{0};\theta_{0}\right) \partial _{\theta }r\left( Z^{T}\theta _{0};\theta _{0}\right)
^{T}I_{A}\left( Z\right) \right\}
\end{equation*}

\begin{equation*}
J=E\left[ \partial _{r}C\left( r\left( Z^{T}\theta _{0};\theta _{0}\right) ;\alpha ^{\ast }\right) \partial _{\theta }r\left( Z^{T}\theta _{0};\theta_{0}\right) \partial _{\theta }r\left( Z^{T}\theta _{0};\theta _{0}\right) ^{T} I_{A}\left( Z\right) \right] .
\end{equation*}
Note that $I=J$ if the variance condition (\ref{var1}) holds and $\alpha^{\ast }=\alpha _{0}.$

Now, we deduce the asymptotic normality of the semiparametric PML $\widehat{\theta }$ estimator in the presence of a nuisance parameter.  Moreover, we obtain the rate of decay to zero of the \ `optimal' bandwidth $\widehat{h}.$ The proof of the following result is given in Appendix ref{proof}.

\quad

\begin{theor}
\label{param1} Suppose that the assumptions in Appendix \ref{assu} hold. Define the set $\Theta _{n}=\left\{ \theta :\left\| \theta -\theta_{0}\right\| \leq d_{n}\right\} $, $n\geq 1$, with $d_{n}\ln n\rightarrow 0$ and $\widetilde{\alpha }_{n},$ $n\geq 1,$ such that $\widetilde{\alpha }%
_{n}-\alpha ^{\ast }=o_{P}(1)$. Fix $c>0.$ If $(\widehat{\theta },\widehat{h})$ is defined as in (\ref{deeff})-(\ref{deeff2}), then $\widehat{h}/h_{n}^{opt}\rightarrow 1,$ in probability, and
\begin{equation*}
\sqrt{n}\left( \widehat{\theta }-\theta _{0}\right) \overset{\mathcal{D}} {\longrightarrow} \mathcal{N}\left( 0,J^{-1}IJ^{-1}\right) .
\end{equation*}
If $Z$ is bounded, the same conclusion remains true for any sequence $d_{n}\rightarrow 0$.
\end{theor}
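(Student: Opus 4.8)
The plan is to exploit the decomposition of the pseudo-log-likelihood $\widehat{S}(\theta,h;\widetilde{\alpha}_n)$ into the purely parametric part $\widetilde{S}(\theta;\widetilde{\alpha}_n)$, the purely nonparametric part $T(h;\alpha^\ast)$, and the remainder $R(\theta,h;\widetilde{\alpha}_n)$, as laid out in Section~\ref{metoda}. First I would invoke the trimming-equivalence argument already sketched after \eqref{deeff2}: using inequality \eqref{ikj}, the fact that $\widehat{S}(\theta,h;\alpha,A^\delta)=o_P(\widehat{S}(\theta,h;\alpha,A))$ uniformly over $\Theta_n\times\mathcal{H}_n$, and $P(G_n>\delta)\to0$ (Lemma~\ref{idic}, whichever of the two assumption sets applies according to whether $Z$ is bounded), reduce the analysis of $(\widehat{\theta},\widehat{h})$ in \eqref{deeff} to the maximizer in \eqref{infeasi} of $\widehat{S}(\theta,h;\widetilde{\alpha}_n)=\widehat{S}(\theta,h;\widetilde{\alpha}_n,A)$. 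The distinction between $d_n\ln n\to0$ in the unbounded case and $d_n\to0$ in the bounded case enters precisely here, through the two cases of Lemma~\ref{idic}.

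Next I would establish that $R(\theta,h;\widetilde{\alpha}_n)$ is uniformly negligible relative to $\widetilde{S}(\theta;\widetilde{\alpha}_n)$ and $T(h;\alpha^\ast)$ over $(\theta,h)\in\Theta_n\times\mathcal{H}_n$ and over admissible $\{\widetilde{\alpha}_n\}$. This is the heart of the argument and relies on the orthogonality conditions \eqref{orth1}--\eqref{orth2}: one Taylor-expands $\psi(Y_i,\hat r_h^i(\cdot);\cdot)$ around $\psi(Y_i,r(\cdot);\cdot)$ in the regression argument, and the leading linear-in-$(\hat r_h^i-r)$ terms have conditional expectation zero by \eqref{orth1} (for the part at $\theta_0$) and by \eqref{orth2} (for the $\theta$-derivative needed to control the $\theta$-dependence), so they contribute only stochastic noise of negligible order; the remaining quadratic-and-higher terms are handled by uniform rates for $\hat r_h^i-r$ and its $\theta$-derivatives over the bandwidth range $[c_n n^{-1/4},c_n^{-1}n^{-1/8}]$, using the standard empirical-process / U-statistic machinery (à la Sherman~(1994b), H\"ardle--Hall--Ichimura~(1993), Delecroix--Hristache--Patilea~(2006)). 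The choice of $\mathcal{H}_n$ bounded away from the edges $n^{-1/4}$ and $n^{-1/8}$ is exactly what makes bias and variance of $\hat r_h^i$ small enough for this. I expect this uniform negligibility of $R$ to be the main obstacle, since it must hold simultaneously in $\theta$, $h$, and $\widetilde{\alpha}_n$ without a $\sqrt n$-rate on $\widetilde{\alpha}_n$; the saving grace is the LEFN identity $\partial_\alpha\partial_r B+\partial_\alpha\partial_r C\, r\equiv0$, which (as shown in the excerpt) kills the $\widetilde{\alpha}_n$-sensitivity at first order.

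Once $R$ is controlled, the joint maximization splits asymptotically: $\widehat{\theta}$ behaves like the maximizer of the parametric criterion $\widetilde{S}(\theta;\widetilde{\alpha}_n)$ and $\widehat{h}$ like the maximizer of $T(h;\alpha^\ast)$. For $\widehat{\theta}$ I would apply the standard $M$-estimation-with-nuisance-parameter asymptotics to $\widetilde{S}(\theta;\widetilde{\alpha}_n)$: a Taylor expansion in $\theta$ about $\theta_0$, consistency from a uniform law of large numbers plus identification, then $\sqrt n(\widehat{\theta}-\theta_0)=-\,(\text{Hessian})^{-1}\sqrt n\,\partial_\theta\widetilde{S}(\theta_0;\widetilde{\alpha}_n)+o_P(1)$; the Hessian converges to $J$ and the score's asymptotic variance is $I$ because $\partial_2\psi(y,r;\alpha)=\partial_r C(r,\alpha)(y-r)$ and $Var(Y\mid Z)=v(Z^T\theta_0;\theta_0)$, while the nuisance-parameter perturbation $\widetilde{\alpha}_n-\alpha^\ast$ drops out by the orthogonality $E[\partial_\alpha\partial_\theta\psi(Y,r(Z^T\theta_0;\theta_0);\alpha)]=0$ derived in the text (this is why only $\widetilde{\alpha}_n-\alpha^\ast=o_P(1)$, not a rate, is needed). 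That yields $\sqrt n(\widehat{\theta}-\theta_0)\xrightarrow{\mathcal D}\mathcal N(0,J^{-1}IJ^{-1})$.

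For the bandwidth, I would show $-T(h;\alpha^\ast)$ equals, up to $h$-free additive terms and uniformly small remainders, a deterministic function asymptotically equivalent to $C_1 h^4+C_2 n^{-1}h^{-1}$ (a weighted mean-squared cross-validation criterion; see Lemma~\ref{mise}), by a second-order expansion of $\psi$ in its regression argument around $r$, replacing $\hat r_h^i-r$ by its bias term (giving the $h^4$ contribution with constant built from $r''+2r'f'/f$) and its variance term (giving the $n^{-1}h^{-1}$ contribution with constant built from $v/f$), the weights being $\tfrac12\partial_r C(r(Z^T\theta_0;\theta_0);\alpha^\ast)$. Minimizing over $h\in\mathcal{H}_n$ and noting $h_n^{opt}=(C_2/4C_1)^{1/5}n^{-1/5}$ lies in the interior of $\mathcal{H}_n$, a standard argmax-continuity / ratio argument gives $\widehat{h}/h_n^{opt}\to1$ in probability. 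The final remark about $Z$ bounded allowing any $d_n\to0$ is inherited directly from the corresponding case of Lemma~\ref{idic} used in the first step.
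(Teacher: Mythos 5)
Your proposal follows essentially the same route as the paper: the decomposition of $\widehat{S}$ into $\widetilde{S}+T+R$, the reduction from the data-driven to the fixed trimming via inequality (\ref{ikj}) and Lemma \ref{idic}, the control of $R$ through the orthogonality conditions (\ref{orth1})--(\ref{orth2}) (Lemma \ref{rr}), the quadratic expansion of the parametric part with the LEFN identity $E[\partial_{\alpha}\partial_{\theta}\psi]=0$ absorbing the lack of a rate on $\widetilde{\alpha}_{n}$, and the bias--variance expansion of $T$ (Lemmas \ref{mise} and \ref{mise2}) yielding $C_{1}h^{4}+C_{2}n^{-1}h^{-1}$ and hence $\widehat{h}/h_{n}^{opt}\rightarrow 1$. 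The only cosmetic differences are the order in which the trimming equivalence is invoked and your not naming Sherman's (1994a) Theorems 1 and 2 explicitly for passing from the quadratic approximation to $\sqrt{n}$-normality.
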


\qquad

In applications $J^{-1}IJ^{-1}$ is unknown and therefore it has to be consistently estimated. To this end, we propose an usual sandwich estimator of the asymptotic variance $J^{-1}IJ^{-1}$ (e.g., Ichimura (1993)). Let $\widehat{f}_{h}\left( \cdot ;\,\theta \right) $ denote the kernel estimator
for the density of $Z^{T}\theta .$ Define
\begin{multline*}
I_{n}=\frac{1}{n}\sum_{i=1}^{n}\left[ \partial _{r}C\left( \widehat{r}_{\widehat{h}}\left( Z_{i}^{T}\widehat{\theta };\widehat{\theta }\right) ; \widetilde{\alpha }_{n}\right) \right] ^{2}\left[ Y_{i}-\widehat{r}_{\widehat{h}}\left( Z_{i}^{T}\widehat{\theta };\widehat{\theta }\right) \right] ^{2} \\
\times \partial _{\theta }\widehat{r}_{\widehat{h}}\left( Z_{i}^{T}\widehat{\theta }; \widehat{\theta }\right) \partial _{\theta }\widehat{r}_{\widehat{h} }\left( Z_{i}^{T} \widehat{\theta };\widehat{\theta }\right) ^{T}I_{\left\{z:\,\widehat{f}_{\widehat{h}}\left( z^{T}\widehat{\theta };\,\widehat{\theta }\right) \geq c\right\} }(Z_{i})
\end{multline*}
\begin{equation*}
J_{n}=\frac{1}{n}\sum_{i=1}^{n}\partial _{r}C\left( \widehat{r}_{\widehat{h} }\left( Z_{i}^{T} \widehat{\theta };\widehat{\theta }\right) ;\widetilde{\alpha }_{n}\right) \partial _{\theta }\widehat{r}_{\widehat{h}}\left( Z_{i}^{T}\widehat{\theta };\widehat{\theta }\right) \partial _{\theta } \widehat{r}_{\widehat{h}}\left( Z_{i}^{T}\widehat{\theta };\widehat{\theta } \right) ^{T}I_{\left\{ z:\,\widehat{f}_{\widehat{h}}\left( z^{T}\widehat{\theta };\,\widehat{\theta }\right) \geq c\right\} }(Z_{i}).
\end{equation*}

\quad

\begin{proposition}
\label{estasv} Suppose that the conditions of Theorem \ref{param1}\ hold. Then, $J_{n}^{-1}I_{n}J_{n}^{-1}\rightarrow J^{-1}IJ^{-1},$ in probability.
\end{proposition}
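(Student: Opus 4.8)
The plan is to show that $I_n \to I$ and $J_n \to J$ in probability; the conclusion then follows by continuous mapping, since $J$ is invertible (as $\partial_r C < 0$ on the trimming set and $\partial_\theta r\, \partial_\theta r^T$ is positive semidefinite with full rank, an assumption implicit in Appendix \ref{assu}). By Theorem \ref{param1} we already know $\widehat\theta \to \theta_0$ in probability (indeed $\sqrt n$-consistent) and $\widehat h / h_n^{opt} \to 1$, so $\widehat h$ is of order $n^{-1/5}$; in particular $\widehat h$ eventually lies in a range where uniform-in-bandwidth kernel results apply. The first step is to replace the estimated quantities $\widehat r_{\widehat h}(Z_i^T\widehat\theta;\widehat\theta)$, $\partial_\theta \widehat r_{\widehat h}(Z_i^T\widehat\theta;\widehat\theta)$ and $\widehat f_{\widehat h}(z^T\widehat\theta;\widehat\theta)$ by their population counterparts $r(Z_i^T\theta_0;\theta_0)$, $\partial_\theta r(Z_i^T\theta_0;\theta_0)$ and $f(z^T\theta_0;\theta_0)$, up to $o_P(1)$ errors uniform in $i$. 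This rests on uniform consistency of the Nadaraya--Watson estimator and its $\theta$-derivative over $\Theta_n \times \mathcal H_n$ (the same kind of uniform bounds invoked in the Extensions subsection, e.g.\ via Andrews (1995)), together with a standard argument that the random trimming indicator $I_{\{\widehat f_{\widehat h}(z^T\widehat\theta;\widehat\theta)\ge c\}}$ agrees with $I_A$ outside an event of probability tending to zero, exactly as established around Lemma \ref{idic} and equation (\ref{ikj}) in the paper (using $P(f(Z^T\theta_0;\theta_0)=c)=0$).

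After this replacement, $I_n$ and $J_n$ become sample averages of i.i.d.\ terms of the form $[\partial_r C(r(Z_i^T\theta_0;\theta_0);\alpha^\ast)]^2 (Y_i - r(Z_i^T\theta_0;\theta_0))^2 \partial_\theta r\,\partial_\theta r^T I_A(Z_i)$ and $\partial_r C(\cdots;\alpha^\ast)\,\partial_\theta r\,\partial_\theta r^T I_A(Z_i)$, plus $o_P(1)$; here I also use $\widetilde\alpha_n \to \alpha^\ast$ together with continuity of $\partial_r C$ in its arguments (a smoothness assumption from Appendix \ref{assu}) to swap $\widetilde\alpha_n$ for $\alpha^\ast$. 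A weak law of large numbers then gives convergence in probability to $E\{[\partial_r C]^2 (Y - r(Z^T\theta_0;\theta_0))^2 \partial_\theta r\,\partial_\theta r^T I_A(Z)\}$ and $E\{\partial_r C\,\partial_\theta r\,\partial_\theta r^T I_A(Z)\}$ respectively. The latter is exactly $J$. For the former, conditioning on $Z$ and using $E[(Y - r(Z^T\theta_0;\theta_0))^2 \mid Z] = v(Z^T\theta_0;\theta_0)$ (from the SIM condition (\ref{sim})) shows it equals $I$. One needs an integrable envelope for the WLLN, which follows from the moment conditions on $Y$ and $Z$ and boundedness of the relevant functions on the trimming region $A$, all part of the standing assumptions.

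The main obstacle is the first step: controlling the difference between the feasible statistics, which involve the data-dependent $\widehat\theta$, $\widehat h$ and the \emph{empirical} trimming, and their infeasible analogues at $(\theta_0,h_n^{opt})$ with the fixed trimming $I_A$. This requires uniform-in-$(\theta,h)$ rates for $\widehat r_h$ and $\partial_\theta \widehat r_h$ over the shrinking neighborhood $\Theta_n$ and the bandwidth band $\mathcal H_n$, sharp enough that the errors, once multiplied by $n^{-1}\sum_i(\cdot)$ and by the (bounded on $A$) weights, are $o_P(1)$. Since $\widehat h \asymp n^{-1/5}$ sits comfortably inside $\mathcal H_n$ and all the needed uniform bounds are already assumed or established in the Appendix for the proof of Theorem \ref{param1}, this is a matter of reusing those estimates rather than proving anything new; the remaining WLLN step is entirely routine.
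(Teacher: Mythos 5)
Your proposal is correct and follows essentially the same route as the paper, which likewise combines consistency of $\widehat{\theta}$ and $\widetilde{\alpha}_{n}$ with uniform-in-$(\theta,z)$ consistency of $\widehat{r}_{\widehat{h}}$ and $\partial_{\theta}\widehat{r}_{\widehat{h}}$ (via Andrews (1995)) to get $I_{n}\rightarrow I$ and $J_{n}\rightarrow J$, then concludes by continuity of the map $(I,J)\mapsto J^{-1}IJ^{-1}$; you simply spell out the trimming-replacement and WLLN steps that the paper leaves as ``standard.'' One trivial slip: Assumption \ref{as51} gives $\partial_{r}C>0$ (the LEFN variance is $[\partial_{r}C]^{-1}$), not $\partial_{r}C<0$, but this does not affect the invertibility of $J$ or anything else in your argument.
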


\quad

\begin{proof}
The arguments are quite standard (e.g., Ichimura (1993), section 7). On one hand, the convergence in probability of $\widehat{\theta }$ and $\widetilde{\alpha }_{n}$ and, on the other hand, the convergence in probability of $\widehat{r}_{\widehat{h}}\left( z^{T}\theta ;\theta \right) $ and $\partial_{\theta }\widehat{r}_{\widehat{h}}\left( z^{T}\theta ;\theta \right) ,$ uniformly over $\theta $ in neighborhoods shrinking to $\theta _{0}$ and uniformly over $z\in A$ (e.g., Andrews (1995), Delecroix, Hristache and Patilea (2006)) imply $I_{n}\rightarrow I$ and $J_{n}\rightarrow J,$ in probability.
\end{proof}

\quad

Theorem \ref{param1} shows, in particular, that $\widehat{\theta }$ is asymptotically equivalent to the semiparametric PML based on the LEF pseudo-log-likelihood $\psi \left( y,r;\alpha ^{\ast }\right) =\ln f\left(y,r\mid \alpha ^{\ast }\right) .$ As in the parametric case, we can deduce a
lower bound for the asymptotic variance $J^{-1}IJ^{-1}$ with respect to semiparametric PML based on LEF densities. This bound is achieved by $\widehat{\theta }$ if the SIM assumption and the variance condition (\ref{var1}) hold and $\alpha ^{\ast }=\alpha _{0}.$ The proof of the following
proposition is identical to the proof of Property 5 of Gouri\'{e}roux, Monfort and Trognon (1984a, page 687) and thus it will be skipped.

\quad

\begin{proposition}
\label{bound} The set of asymptotic variance matrices of the semiparametric PML estimators based on linear exponential families has a lower bound equal to $\mathcal{K}$, where
\begin{equation*}
\mathcal{K}^{-1}=E\left\{ \left[ v\left( Z^{T}\theta _{0};\theta _{0}\right) \right] ^{-1}\partial _{\theta }r\left( Z^{T}\theta _{0};\theta _{0}\right) \partial _{\theta }r\left( Z^{T}\theta_{0}; \theta _{0}\right) ^{T}I_{A}\left( Z\right) \right\} .
\end{equation*}
\end{proposition}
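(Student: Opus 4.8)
The plan is to mimic the classical argument of Gouriéroux, Monfort and Trognon (1984a, Property 5), transposed to the single-index semiparametric setting. Recall from Theorem \ref{param1} that a semiparametric PML estimator based on a LEF density with derivative-of-canonical-parameter function $\partial_r C$ has asymptotic variance $J^{-1}IJ^{-1}$, where, writing $w = \partial_r C\left(r(Z^T\theta_0;\theta_0);\alpha^*\right)$, $v = v(Z^T\theta_0;\theta_0)$ and $D = \partial_\theta r(Z^T\theta_0;\theta_0)$ for brevity, we have $I = E[w^2 v\, D D^T I_A(Z)]$ and $J = E[w\, D D^T I_A(Z)]$. The bound $\mathcal{K}$ corresponds to the choice $w = -1/v$ up to sign (equivalently, the LEF generated by the true conditional variance function, which is the ``optimal'' pseudo-likelihood), giving $J = I = \mathcal{K}^{-1}$; this is the semiparametric analogue of the Gauss--Markov/generalized-least-squares optimality. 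So the statement is that $J^{-1}IJ^{-1} \geq \mathcal{K}$ in the Loewner order for every admissible $w$.

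First I would reduce the matrix inequality to a scalar one: $J^{-1}IJ^{-1} \succeq \mathcal{K}$ is equivalent to $\mathcal{K}^{-1} \succeq J I^{-1} J$, i.e. to showing that for every fixed $\lambda \in \mathbb{R}^{d-1}$,
\begin{equation*}
\lambda^T \mathcal{K}^{-1} \lambda \;\geq\; \lambda^T J I^{-1} J \lambda .
\end{equation*}
Set $a = \lambda^T D$, a scalar random variable. Then $\lambda^T \mathcal{K}^{-1}\lambda = E[v^{-1} a^2 I_A(Z)]$ and $\lambda^T J I^{-1} J \lambda = \lambda^T J\, I^{-1}\, J\lambda$. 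The key step is to write $\lambda^T J \lambda' $-type quantities as inner products in $L^2$: introduce $U = v^{-1/2} a\, I_A(Z)$ and $V = v^{1/2} w\, a\, I_A(Z)$. Then $E[U^2] = \lambda^T \mathcal{K}^{-1}\lambda$, $E[V^2] = \lambda^T I \lambda$ (with the appropriate bilinear extension for the cross terms in $\lambda^T J I^{-1} J\lambda$), and $E[UV] = E[w\, a^2 I_A(Z)] = \lambda^T J \lambda$. The Cauchy--Schwarz inequality $\left(E[UV]\right)^2 \leq E[U^2]\,E[V^2]$ then gives exactly $\left(\lambda^T J \lambda\right)^2 \leq \left(\lambda^T\mathcal{K}^{-1}\lambda\right)\left(\lambda^T I \lambda\right)$ when $I$ is replaced by a rank-one test; the general statement $\lambda^T J I^{-1} J \lambda \leq \lambda^T \mathcal{K}^{-1}\lambda$ follows by applying this with $\lambda$ replaced by $I^{-1}J\lambda$ in the bilinear form, or equivalently by the standard fact that for positive-definite $I$, $\sup_{\mu}\, 2\mu^T J\lambda - \mu^T I \mu = \lambda^T J I^{-1}J\lambda$ and bounding $2\mu^T J\lambda = 2E[UV_\mu] \leq E[U^2] + E[V_\mu^2]$ with $V_\mu = v^{1/2} w\, (\mu^T D)\, I_A(Z)$.

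The main (and only) genuine obstacle is purely bookkeeping: making sure the identifications $E[U^2] = \lambda^T\mathcal{K}^{-1}\lambda$, $E[UV_\mu] = \mu^T J\lambda$, $E[V_\mu^2] = \mu^T I\mu$ are exactly correct, including the roles of the trimming $I_A(Z)$ (which is common to all three matrices, so it poses no difficulty), the evaluation of $v$ at $(Z^T\theta_0;\theta_0)$, and the fact that under the SIM condition alone $I$ and $J$ need not be equal whereas equality holds when (\ref{var1}) is correct with $\alpha^*=\alpha_0$ — the latter being precisely the attainment claim. Since $\partial_r C$ can be any function arising from a LEF density and, as noted in the footnote after (\ref{a1a2}), every value of $v$ is attainable as such a variance function, the family of admissible $w$ is rich enough that the bound is sharp exactly at $w \propto v^{-1}$. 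Because this is verbatim the Gouriéroux--Monfort--Trognon computation with the integrals restricted to $A$ and the parametric score $\partial_\theta r$ replaced by its single-index form, the authors are justified in stating that the proof is identical and may be skipped.
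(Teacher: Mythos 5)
Your argument is correct and is essentially the paper's proof: the authors simply defer to Property 5 of Gouri\'eroux, Monfort and Trognon (1984a), whose proof is exactly the matrix Cauchy--Schwarz inequality $JI^{-1}J\preceq \mathcal{K}^{-1}$ applied to the pair $U=v^{-1/2}\,\lambda ^{T}\partial _{\theta }r\,I_{A}(Z)$ and $V_{\mu }=v^{1/2}\,\partial _{r}C\,\mu ^{T}\partial _{\theta }r\,I_{A}(Z)$ that you reconstruct, with the trimming and the single-index gradient $\partial _{\theta }r$ substituted for the parametric score. The only slip is the sign of the optimal weight: since $w=\partial _{r}C>0$ (Assumption \ref{as51}) is the reciprocal of the LEF variance, the bound is attained at $w=+1/v$, not $-1/v$.
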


\quad

Concerning the nonparametric part, we have the following result on theasymptotic distribution of the nonparametric estimator of the regression. The proof is omitted (see H\"{a}rdle and Stoker (1989)).

\begin{proposition}
\label{nonpar} Assume that the conditions of Theorem \ref{param1} are fulfilled. Then, for any $t$ such that $f\left( t;\theta _{0}\right) >0,$
\begin{equation*}
\sqrt{n\widehat{h}}\left( \widehat{r}_{\widehat{h}}\left( t;\widehat{\theta } \right) -r\left( t;\theta _{0}\right) -\widehat{h}^{2}\beta \left( t\right) \right) \overset{\mathcal{D}} {\longrightarrow }N\left( 0,\,\,K_{2}v(t;\theta_{0})f\left( t;\theta _{0}\right) ^{-1}\right)
\end{equation*}
where $\beta \left( t\right) =\left( K_{1}/2\right) \left[ r^{\prime \prime }\left( t;\theta _{0}\right) +2r^{\prime }\left( t;\theta _{0}\right) f^{\,\prime }\left( t;\theta _{0}\right) f\left( t;\theta _{0}\right) ^{-1} \right] .$
\end{proposition}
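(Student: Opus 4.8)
The plan is to reduce, in a few standard steps, the statement to the classical pointwise central limit theorem for the Nadaraya--Watson estimator with a \emph{deterministic} bandwidth and a \emph{fixed} index, for which one may invoke H\"{a}rdle and Stoker (1989). Throughout, use from Theorem \ref{param1} that $\|\widehat{\theta}-\theta_0\|=O_P(n^{-1/2})$ and $\widehat{h}/h_n^{opt}\to1$ in probability, with $h_n^{opt}\asymp n^{-1/5}$, so that $\sqrt{n\widehat{h}}\asymp n^{2/5}$ and, on an event of probability tending to one, $\widehat{h}$ lies in a shrinking neighbourhood $\mathcal{V}_n=[h_n^{opt}(1-\varepsilon_n),h_n^{opt}(1+\varepsilon_n)]$, $\varepsilon_n\to0$ slowly.

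First I would remove the estimated index by a mean-value expansion, $\widehat{r}_{\widehat{h}}(t;\widehat{\theta})-\widehat{r}_{\widehat{h}}(t;\theta_0)=\partial_\theta\widehat{r}_{\widehat{h}}(t;\bar{\theta})^{T}(\widehat{\theta}-\theta_0)$ for some $\bar\theta$ between $\widehat\theta$ and $\theta_0$. Since $f(t;\theta_0)>0$, by continuity the density of $Z^{T}\theta$ at $t$ stays away from zero for $\theta$ near $\theta_0$, so the uniform convergence of $\partial_\theta\widehat{r}_h(t;\theta)$ to the continuous function $\partial_\theta r(t;\theta)$ over such $\theta$ and over $h$ of order $n^{-1/5}$ (the uniform rates already used in the proof of Theorem \ref{param1}; see also Andrews (1995), Delecroix, Hristache and Patilea (2006)) gives $\sup\|\partial_\theta\widehat{r}_{\widehat{h}}(t;\cdot)\|=O_P(1)$; hence the difference is $O_P(n^{-1/2})$ and, multiplied by $\sqrt{n\widehat{h}}$, it is $O_P(\widehat{h}^{1/2})=o_P(1)$. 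It therefore suffices to treat $\sqrt{n\widehat{h}}\,(\widehat{r}_{\widehat{h}}(t;\theta_0)-r(t;\theta_0)-\widehat{h}^{2}\beta(t))$.

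Next I would linearise the ratio $\widehat{r}_h=\widehat{\gamma}_h/\widehat{f}_h$: writing $\widehat{r}_h(t;\theta_0)-r(t;\theta_0)=(\widehat{\gamma}_h-r(t;\theta_0)\widehat{f}_h)(t;\theta_0)/f(t;\theta_0)+R_n(h)$, where $R_n$ collects the curvature of $x\mapsto1/x$ and the centred fluctuation of $\widehat{f}_h$ and satisfies $\sqrt{nh}\,R_n(h)=o_P(1)$ uniformly over $h\in\mathcal{V}_n$ by the standard uniform-in-$h$ kernel rates, and using that $\widehat{\gamma}_h(t;\theta_0)-r(t;\theta_0)\widehat{f}_h(t;\theta_0)=n^{-1}\sum_j K_h(t-Z_j^{T}\theta_0)(Y_j-r(t;\theta_0))$ has expectation $f(t;\theta_0)^{-1}\int K_h(t-u)(r(u;\theta_0)-r(t;\theta_0))f(u;\theta_0)\,du=h^{2}\beta(t)+o(h^{2})$ by a second-order Taylor expansion (here $K$ being a second order kernel and $r(\cdot;\theta_0),f(\cdot;\theta_0)$ twice continuously differentiable enter). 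Setting
\[
S_n(h):=\frac{1}{\sqrt{n}\,f(t;\theta_0)}\sum_{j=1}^{n}\frac{1}{\sqrt{h}}\Bigl[K\bigl(\tfrac{t-Z_j^{T}\theta_0}{h}\bigr)\bigl(Y_j-r(t;\theta_0)\bigr)-E\bigl\{K\bigl(\tfrac{t-Z^{T}\theta_0}{h}\bigr)\bigl(Y-r(t;\theta_0)\bigr)\bigr\}\Bigr],
\]
one then gets $\sqrt{n\widehat{h}}\,(\widehat{r}_{\widehat{h}}(t;\theta_0)-r(t;\theta_0)-\widehat{h}^{2}\beta(t))=S_n(\widehat{h})+o_P(1)$, the bias cancelling precisely because the recentring is at the \emph{random} $\widehat{h}^{2}\beta(t)$, so that $\sqrt{n\widehat{h}}\,(\widehat{h}^{2}\beta(t)+o(\widehat{h}^{2})-\widehat{h}^{2}\beta(t))=o_P(1)$.

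Finally I would show $S_n(\widehat{h})\overset{\mathcal{D}}{\longrightarrow}N(0,K_2\,v(t;\theta_0)f(t;\theta_0)^{-1})$ by combining: (i) for any deterministic $h$ with $n^{1/5}h$ bounded away from $0$ and $\infty$, a Lyapunov CLT for the i.i.d.\ triangular array, using $h^{-1}E[K^{2}((t-Z^{T}\theta_0)/h)(Y-r(t;\theta_0))^{2}]\to K_2 v(t;\theta_0)f(t;\theta_0)$ (so the limiting variance does not depend on the constant in $h=cn^{-1/5}$) and a Lyapunov ratio of order $(nh)^{-\delta/2}\to0$ under the conditional $(2+\delta)$-moment assumption on $Y$ in Appendix \ref{assu}; and (ii) a stochastic equicontinuity bound $\sup_{h\in\mathcal{V}_n}|S_n(h)-S_n(h_n^{opt})|=o_P(1)$, from a maximal inequality for the empirical process indexed by $h$, using $Var(S_n(h)-S_n(h'))=O(|h-h'|/h_n^{opt})$ and that the relevant class of functions is Euclidean. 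Since $\widehat{h}/h_n^{opt}\to_P1$, (i)--(ii) give $S_n(\widehat{h})=S_n(h_n^{opt})+o_P(1)$ and hence the claim. The main obstacle is step (ii): making rigorous that the pointwise CLT survives substitution of the data-driven bandwidth $\widehat{h}$, i.e.\ obtaining enough uniformity in $h$ over a shrinking neighbourhood of $h_n^{opt}$; removing $\widehat\theta$ and the bias expansion are routine.
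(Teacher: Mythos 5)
Your proposal is correct and follows the route the paper itself indicates: the paper omits the proof of this proposition, citing H\"{a}rdle and Stoker (1989), and in the remark immediately following it reduces the problem exactly as you do in your first step, by a Taylor expansion in $\theta$ combined with the uniform convergence of $\partial_\theta\widehat{r}_h$ (Andrews (1995)) and the $\sqrt{n}$-consistency of $\widehat{\theta}$, so that the contribution of the estimated index is $O_P(n^{-1/2})$ and everything comes down to the classical pointwise CLT for the Nadaraya--Watson estimator at $\theta_0$. The one place where you go beyond what the paper writes is in making explicit the uniformity in $h$ needed to substitute the data-driven $\widehat{h}$ into that CLT --- your step (ii), a variance bound plus a maximal inequality over the shrinking interval around $h_n^{opt}$ together with $\widehat{h}/h_n^{opt}\rightarrow 1$ in probability --- which the paper leaves implicit; this is indeed the right way to close that gap, and the rest of your argument (linearisation of the ratio, second-order bias expansion, Lyapunov CLT under the $4+\varepsilon$ moment assumption) matches the standard development the citation points to.
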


\quad

Note that, for any $z$ such that $f\left( z^{T}\theta _{0};\theta_{0}\right) >0,$
\begin{equation*}
\sqrt{n\widehat{h}}\left( \widehat{r}_{\widehat{h}}\left( z^{T}\widehat{\theta };\widehat{\theta }\right) -r\left( z^{T}\theta _{0};\theta_{0}\right) -\widehat{h}^{2}\beta \left( z^{T}\theta _{0}\right) \right) \overset{\mathcal{D}}{\rightarrow }N\left( 0,\,\,K_{2}v(z^{T}\theta_{0};\theta _{0}) f\left( z^{T}\theta _{0};\theta _{0}\right) ^{-1}\right) .
\end{equation*}
Indeed, use the results of Andrews (1995) to deduce that $\partial _{\theta } \widehat{r}_{\widehat{h}} \left( z^{T}\theta ;\theta \right) \rightarrow \partial _{\theta }r\left( z^{T}\theta ;\theta \right) ,$ in probability, uniformly over neighborhoods of $\theta _{0}$ where $f\left( z^{T}\theta ;\theta \right) $ stays away from zero. Therefore, we can write
\begin{multline*}
\widehat{r}_{\widehat{h}}\left( z^{T}\widehat{\theta };\widehat{\theta } \right) -r\left( z^{T}\theta _{0};\theta _{0}\right) =\widehat{r}_{\widehat{h}}\left( z^{T}\widehat{\theta };\widehat{\theta }\right) -\widehat{r}_{\widehat{h}}\left( z^{T}\theta _{0};\theta _{0}\right) +\widehat{r}_{\widehat{h}}\left( z^{T}\theta _{0};\theta _{0}\right) -r\left( z^{T}\theta
_{0};\theta _{0}\right) \\
=\partial _{\theta }\widehat{r}_{\widehat{h}}\left( z^{T}\theta _{0};\theta _{0}\right) \left( \widehat{\theta }-\theta _{0}\right) +o_{P}\left( \left\| \widehat{\theta }-\theta _{0}\right\| \right) +\widehat{r}_{\widehat{h} }\left( z^{T}\theta _{0};\theta _{0}\right) -r\left( z^{T}\theta _{0};\theta_{0}\right) \\
=O_{P}\left( \left\| \widehat{\theta }-\theta _{0}\right\| \right) +\widehat{r}_{\widehat{h}} \left( z^{T}\theta _{0};\theta _{0}\right) -r\left(z^{T}\theta _{0};\theta _{0}\right)
\end{multline*}
and obtain the asymptotic normality of $\widehat{r}_{\widehat{h}}(z^{T} \widehat{\theta };\widehat{\theta })$ as a consequence of the $\sqrt{n}-$consistency of $\widehat{\theta }$ and the asymptotic behavior of the Nadaraya-Watson estimator.

\section{Two-step semiparametric PML}

\label{twostep}

Here, we consider a two-step semiparametric PML procedure that can be applied in semiparametric single-index regression models when a conditional variance condition like
\begin{equation}
Var\left( Y\mid Z\right) =g\left( E\left( Y\mid Z\right) ,\alpha _{0}\right) =g\left( r\left( Z^{T}\theta _{0};\theta _{0}\right) ,\alpha _{0}\right) , \label{gmm}
\end{equation}
is specified. Assume that this conditional variance condition is correctly specified.\ At the end of this section we also discuss the misspecification case.

First, we have to build a sequence $\left\{ \theta _{n}\right\} $ with limit $\theta _{0}.$ Moreover, in the case of unbounded covariates, $\theta _{n}$ should approach $\theta _{0}$ faster than $1/\ln n.$\ For this purpose, we maximize with respect to $\theta $ a pseudo-likelihood based on a LEF density $l\left( y\mid r\right) $. We use a fixed trimming $I_{B}(\cdot )$ with $B$ a subset of $\mathbb{R}^{d}$ such that, for any $\theta $ and any $z\in B,$ we have $f\left( z^{T}\theta ;\theta \right) \geq c>0.$ To ensure consistency for such a PML estimator, we have to check that
\begin{equation}
\theta _{0}=\ \underset{\theta }{\arg \max }\,E\left[ \,\ln l\left( Y\mid r\left( Z^{T}\theta ;\theta \right) \right) \,I_{B}(Z)\right] ,  \label{idfg} \end{equation} and $\theta _{0}$ is unique with this property. Recall that the SIM condition specifies $\theta _{0}$ as the unique vector satisfying $E\left[ Y\mid Z\right] =E\left[ Y\mid Z^{T}\theta _{0}\right] .$ On the other hand, if $\ln l\left( y\mid r\right) =B\left( r\right) +C\left( r\right) y+D\left( y\right) ,$ then $B\left( m\right) +C\left( m\right) r\leq B\left( r\right) +C\left( r\right) r$ (cf. Property 4, Gouri\'{e}roux, Monfort and Trognon (1984a, page 684)). Deduce that for any $z$,
\begin{equation*}
\theta _{0}=\ \underset{\theta }{\arg \max }\,E\left[ \,\ln l\left( Y\mid r\left( z^{T}\theta ;\theta \right) \right) \right]
\end{equation*}
and $\theta _{0}$ is the unique maximizer. Hence, condition (\ref{idfg}) holds for any set $B.$ This leads us to the following definition of a preliminary estimator.

\quad

\textbf{STEP 1 (preliminary step).} Consider a sequence of bandwidths $h_{n}, $ $n\geq 1,$ such that $n^{\varepsilon }h_{n}\rightarrow 0$ and $n^{1/2-\varepsilon }h_{n}\rightarrow \infty $ for some $0<\varepsilon <1/2.$ Moreover, let $l\left( y\mid r\right) $ be a LEF density. Define%
\begin{equation*}
\theta _{n}=\ \underset{\theta }{\arg \max }\ \frac{1}{n}\sum \limits_{i=1}^{n}\ln l\left( Y_{i}\mid \hat{r}_{h_{n}}\left( Z_{i}^{T}\theta ;\theta \right) \right) \,I_{B}(Z_{i}).
\end{equation*}

\qquad

Delecroix, Hristache and Patilea (2006) showed that, under the regularity conditions required by Theorem \ref{param1}, we have $\theta _{n}-\theta _{0}=o_{P}\left( 1/\ln n\right) .$ Using the preliminary estimate $\theta _{n}$ and the variance condition (\ref{gmm}) we can build $\widetilde{\alpha }_{n},$ $n\geq 1,$ such that $\widetilde{\alpha }_{n}\rightarrow \alpha _{0}$%
, in probability (see the end of this section). Let $l\left( y\mid r,\alpha \right) $ denote a LEFN density with mean $r$ and variance $g\left( r,\alpha \right) .$ Consider $c_{n}\rightarrow \infty $ (e.g., $c_{n}=\ln n$), define $\mathcal{H}_{n}=\left\{ h:\,\,c_{n}\,n^{-1/4}\leq h\leq
c_{n}^{-1}n^{-1/8}\right\} $. Moreover, consider $\Theta _{n}=\left\{ \theta :\left\| \theta -\theta _{0}\right\| \leq d_{n}\right\} $, $n\geq 1$ with $\left\{ d_{n}\right\} $ as in Theorem \ref{param1}. Fix some small $c>0.$

\quad

\textbf{STEP 2.} Define
\begin{equation*}
\left( \widehat{\theta },\widehat{h}\right) =\ \underset{\theta \in \Theta _{n},\,h\in \mathcal{H}_{n}}{\arg \max }\ \frac{1}{n}\sum\limits_{i=1}^{n} \ln l \left(Y_{i} \mid \hat{r}_{h}^{i}\left( Z_{i}^{T}\theta ;\theta \right) ;\, \widetilde{\alpha }_{n}\right) \,I_{\left\{ z:\,\widehat{f} _{h_{n}}^{i}\left( z^{T}\theta _{n};\,\theta _{n}\right) \geq c\right\} }(Z_{i}),
\end{equation*}
with $\theta_n$ and $h_n$ from Step 1.

\quad

The following result is a direct consequence of Theorem \ref{param1}.

\begin{cor}
\label{2step} Suppose that the assumptions of Theorem \ref{param1} hold. If $\widehat{\theta }$ and $\widehat{h}$ are obtained as in Step 2 above, then
\begin{equation*}
\sqrt{n}\left( \widehat{\theta }-\theta _{0}\right) \overset{\mathcal{D}}{\longrightarrow }\mathcal{N}\left( 0,\mathcal{K}\right) ,
\end{equation*}
with
\begin{equation*}
\mathcal{K}^{-1}=E\left\{ \left[ v\left( Z^{T}\theta _{0};\theta _{0}\right) \right] ^{-1}\partial _{\theta }r\left( Z^{T}\theta _{0};\theta _{0}\right) \partial _{\theta }r\left( Z^{T}\theta _{0};\theta _{0}\right) ^{T}I_{A}\left( Z\right) \right\} .
\end{equation*}
Moreover,
\begin{equation*}
\frac{\widehat{h}}{\left( C_{2}/4C_{1}\right) ^{1/5}n^{-1/5}}\rightarrow 1,
\end{equation*}
in probability, where $C_{1}$ and $C_{2}$ are defined as in (\ref{a1a2}) with $\alpha ^{\ast } =\alpha _{0}.$
\end{cor}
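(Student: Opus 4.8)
The plan is to present Corollary \ref{2step} as a specialization of Theorem \ref{param1}: the Step 2 estimator $(\widehat{\theta},\widehat{h})$ is literally the estimator of (\ref{deeff})--(\ref{deeff2}) with $\psi(y,r;\alpha)=\ln l(y\mid r,\alpha)$ for a LEFN density of mean $r$ and variance $g(r,\alpha)$, with nuisance sequence $\{\widetilde{\alpha}_n\}$ and data-driven trimming $\tau_n(\cdot)=I_{\{z:\,\widehat{f}^{\,i}_{h_n}(z^T\theta_n;\theta_n)\ge c\}}(\cdot)$ built from the preliminary $\theta_n$ and $h_n$ of Step 1. Hence nothing new needs to be established about the maximization; the proof reduces to checking the hypotheses of Theorem \ref{param1} for these particular preliminary quantities, and then to simplifying the resulting asymptotic objects under the correctly specified variance condition (\ref{gmm}). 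The four items to verify are: (a) $\theta_n\in\Theta_n$ with the required rate, (b) $h_n$ in the admissible range for the preliminary bandwidth, (c) $\widetilde{\alpha}_n-\alpha^\ast=o_P(1)$ for some $\alpha^\ast$, and (d) identification of $J^{-1}IJ^{-1}$ as $\mathcal{K}$ and of $h_n^{opt}$ as $(C_2/4C_1)^{1/5}n^{-1/5}$.

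For (a), I would invoke the cited result of Delecroix, Hristache and Patilea (2006): under the assumptions of Theorem \ref{param1}, the Step 1 PML estimator based on a LEF density and the fixed trimming $I_B$ is well defined (its identification is the content of (\ref{idfg}), verified in the text via Property 4 of Gouri\'{e}roux, Monfort and Trognon (1984a)) and satisfies $\theta_n-\theta_0=o_P(1/\ln n)$ in the unbounded-$Z$ case and $\theta_n-\theta_0=o_P(1)$ when $Z$ is bounded. Choosing $d_n$ with $d_n\ln n\to0$ decaying slowly enough then gives $P(\theta_n\in\Theta_n)\to1$, which is exactly what is needed for the data-driven trimming of (\ref{deeff2}) to be asymptotically equivalent to the fixed trimming $I_A$ (Lemma \ref{idic}). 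For (b), the bandwidths $h_n$ of Step 1 are chosen by construction so that $n^\varepsilon h_n\to0$ and $n^{1/2-\varepsilon}h_n\to\infty$ for some $0<\varepsilon<1/2$, which is precisely the requirement on the preliminary bandwidth in (\ref{deeff2}); and $\mathcal{H}_n$ is taken to be the same range as in Theorem \ref{param1}.

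Step (c) is the only non-immediate item, and it is where the correctly specified variance condition enters, forcing $\alpha^\ast=\alpha_0$. Starting from $E[(Y-r(Z^T\theta_0;\theta_0))^2\mid Z]=g(r(Z^T\theta_0;\theta_0),\alpha_0)$ one obtains, after taking expectations, a moment equation of the form $E[\phi(Y,r(Z^T\theta_0;\theta_0),\alpha_0)\,I_A(Z)]=0$ whose root in $\alpha$ is unique and equal to $\alpha_0$ because $\alpha\mapsto g(r,\alpha)$ is one-to-one. Replacing $r(\cdot;\theta_0)$ by the Nadaraya--Watson estimator $\widehat{r}_{h_n}(\cdot;\theta_n)$, $\theta_0$ by $\theta_n$, and the expectation by its empirical trimmed counterpart defines $\widetilde{\alpha}_n$ (this construction is the "end of this section" referred to in the text). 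Its consistency follows from: the uniform consistency of $\widehat{r}_{h_n}(z^T\theta;\theta)$ over $z\in A$ and over $\theta$ in shrinking neighborhoods of $\theta_0$ (Andrews (1995); Delecroix, Hristache and Patilea (2006)), available since $h_n$ has an admissible undersmoothing order; the consistency of $\theta_n$; a law of large numbers for the trimmed sample moment; and the continuity and strict monotonicity of $g$, which transfer convergence of the sample moment to convergence of its root. Hence $\widetilde{\alpha}_n\to\alpha_0$ in probability and hypothesis (c) of Theorem \ref{param1} holds with $\alpha^\ast=\alpha_0$. The mild technical obstacle here is only the behaviour of the plug-in step near the boundary $\{f(z^T\theta_0;\theta_0)=c\}$, handled exactly as in the Appendix.

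With (a)--(c) in place, Theorem \ref{param1} gives $\sqrt{n}(\widehat{\theta}-\theta_0)\overset{\mathcal{D}}{\longrightarrow}\mathcal{N}(0,J^{-1}IJ^{-1})$ and $\widehat{h}/h_n^{opt}\to1$ in probability, with $h_n^{opt}=(C_2/4C_1)^{1/5}n^{-1/5}$ and $C_1,C_2$ as in (\ref{a1a2}) evaluated at $\alpha^\ast=\alpha_0$, which is the announced bandwidth rate. For the asymptotic variance I would use that the LEFN variance is $[\partial_rC(r,\alpha)]^{-1}$, so under (\ref{gmm}) with $\alpha^\ast=\alpha_0$ one has $[\partial_rC(r(Z^T\theta_0;\theta_0);\alpha_0)]^{-1}=g(r(Z^T\theta_0;\theta_0),\alpha_0)=v(Z^T\theta_0;\theta_0)$; substituting $\partial_rC=v^{-1}$ into the definitions of $I$ and $J$ shows that both equal $E\{v(Z^T\theta_0;\theta_0)^{-1}\partial_\theta r(Z^T\theta_0;\theta_0)\partial_\theta r(Z^T\theta_0;\theta_0)^T I_A(Z)\}=\mathcal{K}^{-1}$, whence $J^{-1}IJ^{-1}=\mathcal{K}$. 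This is precisely the attainment of the lower bound of Proposition \ref{bound}, and it completes the proof.
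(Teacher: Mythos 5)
Your proposal is correct and follows essentially the same route as the paper: the paper dispatches the corollary as a "direct consequence of Theorem \ref{param1}", relying on the Step 1 rate $\theta_n-\theta_0=o_P(1/\ln n)$ from Delecroix, Hristache and Patilea (2006), the consistency $\widetilde{\alpha}_n\to\alpha_0$ from the moment construction (\ref{nuis}), and the identity $I=J=\mathcal{K}^{-1}$ under correct specification with $\alpha^\ast=\alpha_0$ (via $[\partial_rC(r,\alpha_0)]^{-1}=g(r,\alpha_0)=v$). Your write-up simply makes each of these verifications explicit, exactly as the paper intends.
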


\quad

\emph{Remark 1.} Let us point out that simultaneous optimization of the semiparametric criterion in Step 1 with respect to $\theta $, $\alpha $ and $h$ (or with respect to $\theta $ and $\alpha $ for a given $h$) is not recommended, even if the conditional variance $Var\left( Y\mid Z\right) $ is correctly specified. Indeed, if the true conditional distribution of $Y$ given $Z$ is not the one given by the LEFN density $l\left( y\mid r,\alpha \right) =\exp \psi \left( y,r;\alpha \right) ,$ joint optimization with respect to $\theta $ and $\alpha $ leads, in general, to an inconsistent
estimate of $\alpha _{0}.$ (This failure is well-known in the parametric case where $r$ is a known function; see comments of Cameron and Trivedi (2013), pages 84-85. In view of decomposition (\ref{deco}) we deduce that this fact also happens in the semiparametric framework where $r$ has to be estimated.) In this case the matrices $I$ and $J$ defined in section \ref{rezultat} are no longer equal and thus the asymptotic variance of the one-step semiparametric estimator of $\theta $ obtained by simultaneous maximization of the criterion in Step 1 with respect to $\theta $, $\alpha $ does not achieve the bound $\mathcal{K}$. However, when the SIM condition holds and the true conditional law of $Y$ is given by the LEFN density $l=\exp \psi ,$ our two-step estimator $\widehat{\theta }$ and the semiparametric MLE of $\theta _{0}$ obtained by simultaneous optimization with respect to $(\theta ,\alpha )$ are asymptotically equivalent.

\quad

\emph{Remark 2.} Note that if we ignore the efficiency loss due to trimming, $\mathcal{K}$ is equal to the efficiency bound in the semiparametric model defined \emph{only} by the single-index condition $E\left(  Y\mid Z\right)  =E \left(  Y\mid Z^{T}\theta _{0}\right)  $ when the variance condition (\ref{gmm}) holds. To see this, apply the bound of Newey and Stoker (1993) with the true variance given by (\ref{gmm}). Our two-stage estimator achieves this SIM efficiency bound (if the variance is well-specified). However, this SIM bound is not necessarily the two moment conditions model bound.\ The latter should take into account the variance condition (see Newey (1993), section 3.2, for a similar discussion in the parametric nonlinear regression framework). In other words our two-stage estimator has some optimality properties but it may not achieve the semiparametric efficiency bound of the two moment conditions model. The same remark applies for the two-stage semiparametric generalized least squares (GLS) procedure of H\"{a}rdle, Hall and Ichimura (1993) [see also Picone and Butler (2000)]. Achieving semiparametric efficiency when the first two moments are specified would be possible, for instance, by estimating higher orders conditional moments nonparametrically. However, in this case we face again the problem of the
curse of dimensionality that we tried to avoid by assuming the SIM condition.

\quad

To complete the definition of the two-step procedure above, we have to indicate how to build a consistent sequence $\left\{ \widetilde{\alpha }_{n}\right\} $.\ Such a sequence can be obtained from the moment condition (\ref{gmm}) after replacing $r\left( z^{T}\theta _{0};\theta _{0}\right) $ by a suitable estimator.\ This kind of procedure is commonly used in the semiparametric literature (e.g., Newey and McFadden (1994)).\ For simplicity, let us only consider the Negative Binomial case where, for any $z,$ we have
\begin{equation} \label{reg}
E\left[ \left( Y-E\left( Y\mid Z\right) \right) ^{2}\mid Z=z\right] =r\left( z^{T}\theta_{0}; \theta _{0}\right) \left[ 1+\alpha _{0}r\left( z^{T}\theta_{0};\theta _{0}\right) \right] .
\end{equation}
Consider a set $B\subset \mathbb{R}^{d}$ such that, for any $\theta $ and any $z\in B,$ we have $f\left( z^{T}\theta ;\theta \right) \geq c>0.$ We can write
\begin{equation*}
E\left\{ E\left[ \left( Y-r\left( Z^{T}\theta _{0};\theta _{0}\right) \right) ^{2}-r\left( Z^{T}\theta _{0};\theta _{0}\right) \mid Z\right] I_{B}\left( Z\right) \right\} =\alpha_{0} E\left\{ r\left( Z^{T}\theta_{0};\theta _{0}\right) ^{2}I_{B}\left( Z\right) \right\} .
\end{equation*}
Consequently, we may estimate\footnote{One can expect little influence of the choice of the bandwidth used to construct the $\widetilde{\alpha }_{n}$. This is indeed confirmed by the simulation experiments we report in section \ref{simulsec}. } $\alpha _{0}$ by
\begin{equation}
\widetilde{\alpha }_{n}=\frac{\frac{1}{n}\sum_{i=1}^{n}\left[ \left( Y_{i}- \widehat{r}_{h_{n}} \left( Z_{i}^{T}\theta _{n};\theta _{n}\right) \right)^{2}-\widehat{r}_{h_{n}}\left( Z_{i}^{T} \theta _{n};\theta _{n}\right) \right] I_{B}\left( Z_{i}\right)} {\frac{1}{n} \sum_{i=1}^{n}\widehat{r}_{h_{n}}\left( Z_{i}^{T}\theta _{n};\theta _{n}\right) ^{2}I_{B}\left(
Z_{i}\right) }  \label{nuis}
\end{equation}%
with $\theta _{n}$ and $h_{n}$ from Step 1 and $\widehat{r}_{h_{n}}$ the Nadaraya-Watson estimator with bandwidth $h_{n}$. Since $\theta _{n}\rightarrow \theta _{0},$ deduce that $\widetilde{\alpha }_{n}\rightarrow \alpha _{0},$ in probability (see also the arguments we used in subsection \ref{relax}).

Now, let us comment on what happens with our two-step procedure if the second order moment condition is misspecified, while the SIM condition still holds. In general, the sequence $\widetilde{\alpha }_{n}$ one may derive from the conditional variance condition and the preliminary estimate of $\theta _{0}$ is still convergent to some pseudo-true value $\alpha ^{\ast }$ of the nuisance parameter.\footnote{For instance, $\widetilde{\alpha }_{n}$ defined in (\ref{nuis}) is convergent in probability to
\begin{equation*}
\alpha ^{\ast }=\frac{E[\left( Y-r\left( Z^{T}\theta _{0};\theta _{0}\right) \right) ^{2} I_{B}(Z)]-E[r\left( Z^{T}\theta _{0};\theta _{0}\right) I_{B}(Z)]}{E[r(Z^{T}\theta _{0};\theta _{0})^{2} I_{B}(Z)]}.
\end{equation*}
To ensure that the limit of $\widetilde{\alpha }_{n}$ is positive, one may replace $\widetilde{\alpha }_{n}$ by $\max \left( \widetilde{\alpha }_{n},\rho \right) $ for some small but positive $\rho .$} Then, the behavior of $\left( \widehat{\theta },\widehat{h}\right) $ yielded by Step 2 is described by Theorem \ref{param1}, that is $\widehat{\theta }$ is still $%
\sqrt{n}-$asymptotically normal and $\widehat{h}$ is still of order $n^{-1/5}.$

Finally, if the SIM condition does not hold, then $\widehat{\theta }$ estimates a kind of first projection-pursuit direction. In this case, our procedure provides an alternative to minimum average (conditional) variance estimation (MAVE) procedure of Xia \emph{et al}. (2002). The novelty would be that the first projection direction is defined through a more flexible PML function than the usual least-squares criterion. This case will be analyzed elsewhere.

\section{Empirical evidence}

In our empirical section we consider the case of a count response variable $Y$. A benchmark model for studying event counts is the Poisson regression model. Different variants of the Poisson regression have been used in applications on the number of patents applied for and received by firms, bank failures, worker absenteeism, airline or car accidents, doctor visits, \emph{etc.}
Cameron and Trivedi (2013) provide an overview of the applications of Poisson regression. In the basic setup, the regression function is log-linear. An additional unobserved multiplicative random error term in the conditional mean function is usually used to account for unobserved heterogeneity. In this section we consider semiparametric single-index extensions of such models.

\subsection{Monte Carlo simulations}\label{simulsec}

To evaluate the finite sample performances of our estimator $\widehat{\theta}$ and of the optimal bandwidth $\widehat{h}$, we conduct a simulation experiment with 500 replications.

We consider three explanatory variables $Z=(Z_1,Z_2,Z_3)^\top\sim N(0,\Sigma)$ with $\Sigma=[\sigma_{ij}]_{3\times3}$ and $\sigma_{ij}=0.5^{|i-j|}$.  The regression function is
\begin{equation*}
E(Y\mid Z)=(Z^\top\theta_0)^2+0.5
\end{equation*}
and $\theta _{0}=(\theta _{0}^{(1)},\theta _{0}^{(2)},\theta_{0}^{(3)})^{T}=(1,3,-2)^{T}$. The conditional distribution of $Y$ given $Z$ and $\varepsilon $ is Poisson of mean $r(Z\theta _{0};\theta _{0})\!\cdot \varepsilon $ with $\varepsilon $ independent of $Z$ and distributed according to $\text{Gamma}(0.5,2)$ or $\text{Uniform}(0,2)$. Thus, the conditional variance of $Y$ given $Z$ is given by the function $g\left( r,\alpha \right) =r\left( 1+\alpha r\right) $ with $\alpha _{0}=2$ for $\varepsilon\sim \text{Gamma}(0.5,2)$ and $\alpha=1/3$ for $\varepsilon\sim \text{Uniform}(0,2)$.

For this simulation experiment we generate samples of size $n=200$ and $300$. For the nonparametric part we use a quartic kernel $K\left( u\right) =(15/16)\left(1- u^{2}\right) ^{2}I_{\left[ -1,1\right] }\left( u\right) $ . To estimate the parameter $\theta _{0}$ and the regression $r(\cdot ;\theta_{0})$ we use two semiparametric two-step estimation procedures as defined in section \ref{twostep}: i) A  procedure with a Poisson PML in the first step and a Negative Binomial PML in the second step; let $\widehat{\theta }_{NB-SP}=(1,\widehat{\theta }_{NB-SP}^{(2)},\widehat{\theta }_{NB-SP}^{(3)})^\top$ 
denote the two-step estimator. ii) a procedure with a least-squares method in the first step and a GLS method in the second step; let $\widehat{\theta }_{GLS-SP}=(1,\widehat{\theta }_{GLS-SP}^{(2)}, \widehat{\theta }_{GLS-SP}^{(3)})^\top$  be the two-step estimator. Note that $\widehat{\theta }_{NB-SP}$ and $\widehat{\theta }_{GLS-SP}$ have the same asymptotic variance. In both two-step procedures considered, we estimate $\alpha _{0}$ using the estimator defined in (\ref{nuis}). The bandwidth $h_{n}$ is equal to $3n^{-1/5}.$ We also consider the parametric two-step GLS method as a benchmark. In this case the link function and the variance parameter are considered given; let $\widehat{\theta }_{GLS-P}=(1,\widehat{\theta }_{GLS-P}^{(2)}, \widehat{\theta }_{GLS-P}^{(3)})^\top$ denote the corresponding estimator.

\quad
{\small
\begin{center}
{\footnotesize Table 1. Poisson regression with unobserved heterogeneity $\varepsilon\sim \text{Gamma}(0.5,2)$.  The true conditional variance of $Y$ given $Z$ is $r(Z\theta _{0};\theta _{0})(1+2r(Z\theta _{0};\theta _{0}))$ with $r\left( t;\theta _{0}\right) =t^2+0.5$. The true vector $\theta _{0}$ is $\left( 1,3,-2\right) ^{T}$.  Let  $\widehat{\theta }_{NB-SP}$ and $\widehat{\theta }_{GLS-SP}$ denote the two-step estimators obtained from the Negative Binomial pseudo-likelihood and GLS criterion, respectively. The first step  Poisson PML estimator is denoted by $\widehat{\theta }_{POI-SP}.$ The superscripts  indicate the components of the vectors.}

{\footnotesize
\begin{tabular}{cccccc|cccc}
\hline\hline
& & & &  &  &  &  &  &    \\
$n$ &  & $\widehat{\theta }_{GLS-P}^{(2)}$ & $\widehat{\theta }_{GLS-SP}^{(2)}$&$\widehat{\theta }_{POI-SP}^{(2)}$&$\widehat{\theta }_{NB-SP}^{(2)}$ & $\widehat{\theta }_{GLS-P}^{(3)}$   & $\widehat{\theta }_{GLS-SP}^{(3)}$ & $\widehat{\theta }_{POI-SP}^{(3)}$ & $\widehat{\theta }_{NB-SP}^{(3)}$     \\
 \hline
&  & &  &  &  &  &  &  &    \\
200 & mean &  2.8977 & 2.8019 &  3.0177  & 3.1249 &  -1.9501& -1.6954 &-1.7955   & -2.0520 \\
& std.    &  0.8097 & 0.8986 &   0.9937 &  0.9481 &  0.6268  & 0.5170 &0.6435   & 0.5580 \\
& MSE     &  0.3822 & 0.8467 &  0.9879 &  0.9145 &  0.3929  & 0.3600 & 0.4559  & 0.3167 \\
 \hline
&  &  &  &  & &  &  &  &   \\
300 & mean &  2.9422 &   2.8261& 2.9982  & 3.0758 &  -1.9594 &  -1.7215 &-1.8028  & -1.9569 \\
& std.    &  0.4600 &  0.7741 &  0.9288 &  0.8297 &  0.5002  & 0.4568 &0.5705   & 0.4670 \\
& MSE     &  0.2150 &  0.6295 & 0.8628 & 0.6941 &  0.2519  & 0.2862 & 0.3643  & 0.2199 \\
\hline\hline
\end{tabular}
}
\end{center}
}
\quad

{\small
\begin{center}
{\footnotesize Table 2. The same setup as in Table 1 but with $\varepsilon\sim \text{Uniform}(0,2)$ and the true conditional variance of $Y$ given $Z$ equal to  $ r(Z\theta _{0};\theta _{0})(1+(1/3)r(Z\theta _{0};\theta _{0})).$ }

{\footnotesize
\begin{tabular}{cccccc|cccc}
\hline\hline
&  &  &  &  &  &  &  &  &  \\
$n$ &  &$\widehat{\theta }_{GLS-P}^{(2)}$ & $\widehat{\theta }_{GLS-SP}^{(2)}$ & $\widehat{\theta }_{POI-SP}^{(2)}$ & $\widehat{\theta }_{NB-SP}^{(2)}$
&$\widehat{\theta }_{GLS-P}^{(3)}$ & $\widehat{\theta }_{GLS-SP}^{(3)}$ & $\widehat{\theta }_{POI-SP}^{(3)}$ & $\widehat{\theta }_{NB-SP}^{(3)}$     \\
 \hline
&  &  &  &  &  &  &  &  &  \\
200 & mean & 2.9842 & 2.8460 & 2.9755 &  3.0613  &   -1.9961 & -1.8702 & -1.9094 & -2.0127 \\
& std.    & 0.2505 &   0.4537 &  0.6619 & 0.4917 &0.2551  & 0.2874& 0.4117 & 0.2921 \\
& MSE     & 0.0630 &   0.2295 &  0.4387 &  0.2456 &  0.0651  & 0.0994 & 0.1777  & 0.0855 \\
 \hline
&  &  &  &  &  &  &  &  &  \\
300 & mean & 2.9919&   2.8956 &  2.9422  & 3.0618 & -1.9946 & -1.8999 & -1.8953   & -2.0052 \\
& std.    & 0.2213 & 0.4279 &  0.5753 & 0.3658 & 0.2443  & 0.2647 & 0.3639 &
0.2237 \\
& MSE     & 0.0497 &  0.1940 &  0.3343 &  0.1376 & 0.0597  & 0.0800 & 0.1433  &
 0.0500 \\
\hline\hline

\end{tabular}
}
\end{center}
}

The results on the estimates of the components of $\theta _{0}$ are provided in Table 1 and Table 2. We report the mean, the standard deviation and the estimated mean squared error (MSE) for each component. The two semiparametric estimators that incorporate the information on the conditional variance clearly outperform the semiparametric single-index estimator that ignores that information. Moreover, they behave reasonably well compared to the parametric benchmark.

\subsection{A real data example}

\setcounter{equation}{0}
In order to further illustrate our methodology, we consider a real dataset on recreational trips as presented by Cameron and Trivedi (2013). This data initially collected by Sellar, Stoll and Chavas (1985) is built from a survey that includes the number of recreational boating trips to Lake
Sommerville, Texas. We reproduce below the tables that describe the observed frequencies and the explanatory variables. We do not use all the explanatory variables for estimation since the variables $C1$, $C3$ and $C4$ are almost perfectly correlated in the sample. (Indeed, $Corr(C1,C3)=0.977$, $Corr(C1,C4)=0.987$ and $Corr(C3,C4)=0.964.$) To avoid collinearity problems,
we drop $C3$ and $C4$. We standardize the variables $INC$ and $C1$.

\quad

\begin{center}
{\footnotesize Table 3. The recreational trips data set: actual frequency distribution.}


{\footnotesize
\begin{tabular}{llllllllllll}
\hline\hline
&  &  &  &  &  &  &  &  &  &  &  \\
Number of Trips & 0 & 1 & 2 & 3 & 4 & 5 & 6 & 7 & 8 & 9 & 10 \\
Frequency & 417 & 68 & 38 & 34 & 17 & 13 & 11 & 2 & 8 & 1 & 13 \\ \hline
&  &  &  &  &  &  &  &  &  &  &  \\
Number of Trips & 11 & 12 & 15 & 16 & 20 & 25 & 26 & 30 & 40 & 50 & 88 \\
Frequency & 2 & 5 & 14 & 1 & 3 & 3 & 1 & 3 & 3 & 1 & 1 \\ \hline\hline
\end{tabular}
}
\end{center}

\qquad

\begin{center}
{\footnotesize Table 4. Explanatory variables for the recreational trips counts.}


{\footnotesize
\begin{tabular}{llll}
\hline\hline
&  &  &  \\
Variable & Definition & Mean & Std \\ \hline
&  &  &  \\
$TRIPS$ & Number of recreational boating trips in 1980 & 2.244 & 6.292 \\
& by a sample group &  &  \\
$SO$ & Facility's subjective quality ranking on a scale of 1 to 5 & 1.419 &
1.812 \\
$SKI$ & Equal 1 if engaged in water-skiing at the lake & 0.367 & 0.482 \\
$INC$ & Household income of the head of the group (\$10,000/year) & 0.385 &
0.185 \\
$FC3$ & Equal 1 if user's fee paid at Lake Sommerville & 0.019 & 0.139 \\
$C1$ & Hundreds of dollar expenditure when visiting Lake Conroe & 0.554 &
0.467 \\
$C3$ & Hundreds of dollar expenditure when visiting Lake Somerville & 0.599
& 0.488 \\
$C4$ & Hundreds of dollar expenditure when visiting Lake Houston & 0.560 &
0.461 \\ \hline\hline
\end{tabular}
}
\end{center}

\quad

The model we consider is the one given by equations (\ref{sim})-(\ref{var1}) with $g(r,\alpha)=r(1+\alpha r)$. First, we assume that the regression function is log-linear, that is we consider the standard Negative Binomial Parametric model (NB-P). Next, we no longer assume that the regression function is known and we apply our semiparametric methodology, the semiparametric Negative Binomial pseudo-likelihood procedure.
In the semi-parametric procedures the coefficient of the variable $SO $ is set to 1.  For the nonparametric part we use the quartic kernel $K(u)=(15/16)(1-u^2)^2 I_{[-1,1]}(u)$. The parameter estimates and estimated standard errors are gathered in Table 5, the plot of the estimated  link function  is provided in Figure 1.

\begin{center}
{\footnotesize Table 5. Estimation results: parametric (NB-P) versus semiparametric model ( NB-SP).}


{\footnotesize
\begin{tabular}{lll}
\hline\hline
&  &   \\
Parameters & NB-P &  NB-SP \\
\hline
&  &    \\
$Intercept$ & -1.7452 (0.1441) & .  \\
$SO$ & 0.9017 (0.0430) & 1  \\
$SKI$ & 0.4420 (0.1707) & -0.2489 (0.0405) \\
$INC$ & -0.2245 (0.0906) & 0.1963 (0.0690)\\
$FC3$ & 1.5813 (0.4404) & -0.1399 (0.0702)\\
$C1$ & -0.3258 (0.1018) & -0.2987 (0.0995) \\
$\alpha $ & 2.2983 (0.2210) & 5.5764 \\
$h$ & . & 5.6530  \\
\hline\hline
\end{tabular}
}
\end{center}

\quad
\begin{figure}
\vspace{-1.5cm}
\center
   \includegraphics[width=15cm]{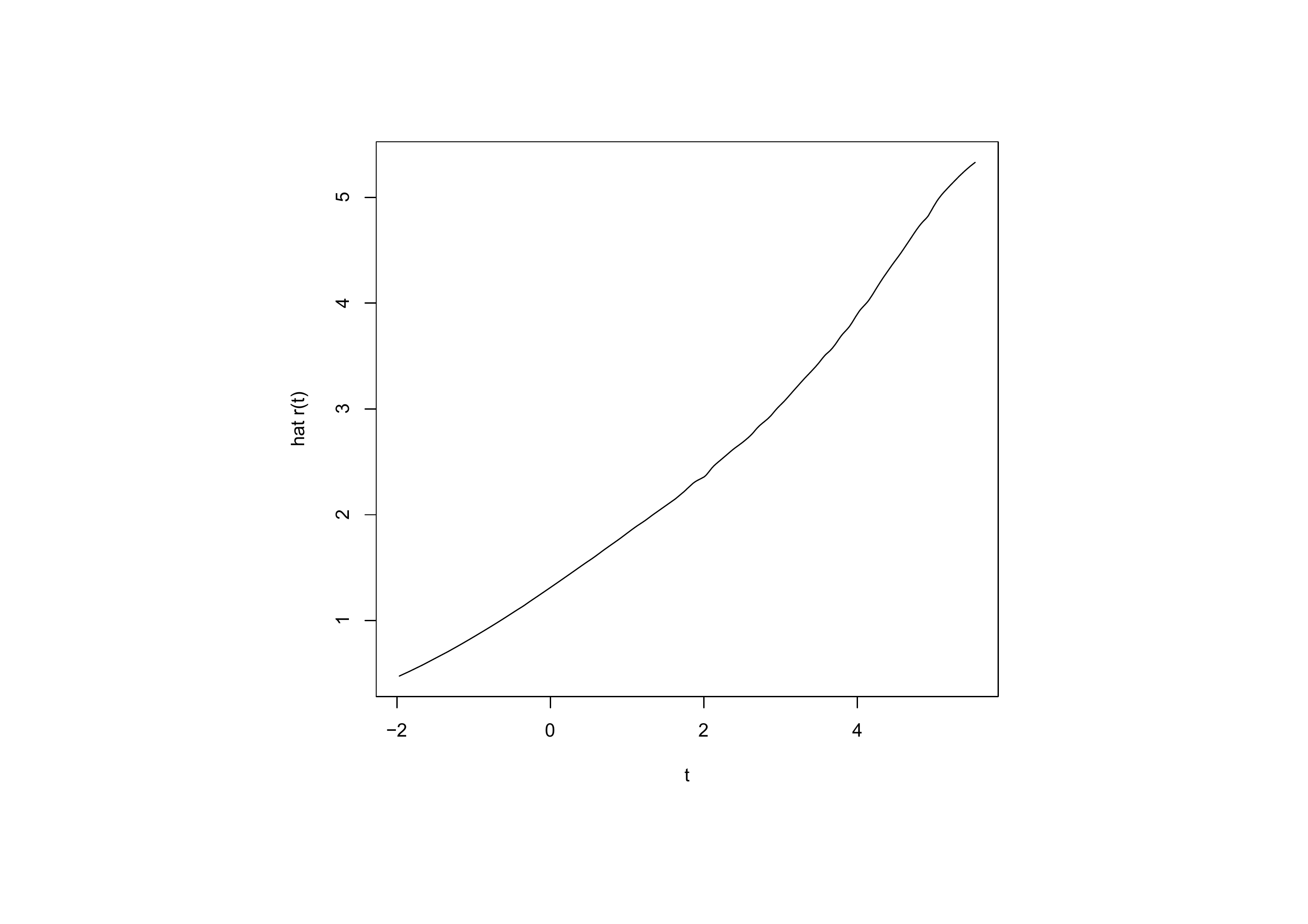}
   \vspace{-1.5cm}
  \caption{The link function}
\end{figure}

Note that the estimate of the coefficient of $SO$ in the parametric model is close to one, while in the semiparametric approach we fixed it to one. Thus the estimated values of the remaining parameters in the parametric and semiparametric cases are almost directly comparable. The results obtained with the semiparametric approach seem more realistic. For instance, the coefficient of $INC$ covariate is positive with NB-SP and the link function is strictly monotone. This suggests that  a higher income more likely induces a larger number of recreational trips.  The NB-P model leads to the opposite conclusion. The reported parametric and semiparametric standard errors cannot
be directly compared on the same basis since we can only compute the standard error of a ratio of parameters in the semiparametric cases. The large bandwidth could be explained by the large conditional variance of the response and a link function with a second derivative close to zero. This leads to  a large constant $(C_2/4C_1)^{1/5}$ in the expression of $h_n^{opt}$, see equation (\ref{a1a2}) above.

In order to evaluate the overall performance of the parametric and semiparametric models and of the estimation methods, we consider various goodness-of-fit measures such as the Pearson statistic, the deviance statistic and the deviance pseudo R-squared statistic. The Pearson
statistics is given by
\begin{equation*}
P=\sum\limits_{i=1}^{n}\frac{\left( Y_{i}-\widehat{r}_{i}\right) ^{2}}{\widehat{\omega }_{i}},
\end{equation*}
where $\widehat{r}_{i}$ is the estimated conditional mean for individual $i$ and $\widehat{ \omega }_{i}$ is the estimated conditional variance computed according to equation (\ref{var1}). The deviance statistic is given by
\begin{equation*}
D=2\sum\limits_{i=1}^{n}\left[ Y_{i}\ln \left( \frac{Y_{i}}{\widehat{r}_{i}} \right) -\left( Y_{i}+1/\widehat{\alpha }\right) \ln \left( \frac{Y_{i}+1/ \widehat{\alpha }} {\widehat{r}_{i}+1/\widehat{\alpha }}\right) \right],
\end{equation*}
with $\widehat{\alpha }$ the estimated value of the nuisance parameter with the values given in the Table 5. Finally, if $\overline{Y}$ denotes the sample mean of the variable $Y$, the
deviance pseudo R-squared statistic is
\begin{equation*}
R_{DEV}^{2}=1-\frac{\sum\limits_{i=1}^{n}\left[ Y_{i}\ln \left( Y_{i}/ \widehat{r}_{i} \right) -\left( Y_{i}+1/\widehat{\alpha }\right) \ln \left( \frac{Y_{i}+1/\widehat{\alpha }}{\widehat{r}_{i}+1/\widehat{\alpha }}\right) \right] }{\sum\limits_{i=1}^{n}\left[ Y_{i}\ln \left(Y_{i}/ \, \overline{Y} \, \right) -\left( Y_{i}+1/\widehat{\alpha }\right) \ln \left( \frac{Y_{i}+1/ \widehat{\alpha }}{\overline{Y}+1/\widehat{\alpha }}\right) \right] }.
\end{equation*}

Another model diagnostic is obtained when comparing fitted probabilities and actual probabilities by the mean of a chi-square type statistic. The statistic we consider is
\begin{equation*}
\xi =n\sum\limits_{j=1}^{J}\frac{\left( \overline{p}_{j}-\widehat{p} _{j}\right) ^{2}} {\overline{p}_{j}},
\end{equation*}
where the possible values of $Y$ are aggregated in $J$ non overlapping cells. \footnote{The chi-square statistic we consider is not necessarily chi-square distributed under the null hypotheses of a well specified model. This is because it does not correctly take into account the estimation error in $\widehat{p}_{j}$. See Andrews (1988) for the general definition of the chi-square goodness-of-fit test statistic in nondynamic regression models. Here, we only use $\xi $ as a crude diagnostic for the three types of fitted probabilities $\widehat{p}_{j}$.} The actual frequency for cell $j$ is denoted $\overline{p}_{j}$ while $\widehat{p}_{j}$ is the corresponding
predicted probability by the model under study. For both methods GLS-SP and NB-SP we used the probabilities of a negative binomial distribution to compute $\widehat{p}_{j}.$ We consider seven cells corresponding to the values $TRIP=0,...,5$ and $TRIP>5$. All the results are summarized in Table 6. The semiparametric model performs better than the parametric model.
We also give the estimators of the probability in Table 7. We can see that our estimators are close to the empirical probability of $TRIP$. The semiparametric approach greatly improves the standard  parametric modeling.

\quad

\begin{center}
{\footnotesize Table 6. Goodness-of-fit statistics: $P-$Pearson statistic, $D-$deviance statistic, $R_{DEV}^{2}-$deviance pseudo R-squared statistic and $\xi -$chi-square statistic.}


{\footnotesize
\begin{tabular}{lll}
\hline\hline
&  &    \\
& NB-P & NB-SP \\ \hline
&  &   \\
$P$ &5296.506 & 608.7212  \\
$D$ & 1158.41 & 405.1771  \\
$R_{DEV}^{2}$ & 0.4780 & 0.1886 \\
$\xi $ & 968.9416 & 3.1922 \\
 \hline\hline
\end{tabular}
}
\qquad
\end{center}
\quad
\begin{center}
{\footnotesize Table 7. Empirical probability and estimate probability}


{\footnotesize
\begin{tabular}{llllllll}
\hline
\hline
$TRIPS$& 0&1&2&3&4&5&$>5$ \\
\hline
Empirical probability&0.6327&0.1031&0.0576&0.0515&0.0257&0.0197&0.1092\\
NB--P& 0.1111& 0.1572& 0.1596& 0.1407& 0.1147& 0.0889&0.2273\\
NB--SP&0.6314& 0.1045& 0.0568& 0.0381& 0.02797& 0.0215&0.1194\\
\hline
\hline
\end{tabular}
}
\quad
\end{center}

\section{Conclusion}

\label{concl}

\setcounter{equation}{0}

We consider a semiparametric single-index model (SIM) where an additional second order moment condition is specified. To estimate the parameter of interest $\theta $ we introduce a two-step semiparametric pseudo-maximum likelihood (PML) estimation procedure based on linear exponential families with nuisance parameter densities. This procedure extends the quasi-generalized pseudo-maximum likelihood method proposed by Gouri\'{e}roux, Monfort and Trognon (1984a, 1984b). We also provide a natural rule for choosing the bandwidth of the nonparametric smoother appearing in the estimation procedure. The idea is to maximize the pseudo-likelihood of the second step simultaneously in $\theta $ and the smoothing parameter $h$. The rate of the bandwidth is allowed to lie in a range between $n^{-1/4}$ and $n^{-1/8}$. We derive the asymptotic behavior of $\widehat{\theta }$, the two-step semiparametric PML we propose. If the SIM condition holds, then $\widehat{\theta }$ is $\sqrt{n}-$asymptotically normal. We also provide a consistent estimator of its variance. When the SIM condition holds and the conditional variance is correctly specified, then $\widehat{\theta }$ has the best variance amongst the semiparametric PML estimators. The `optimal' bandwidth $\widehat{h}$ obtained by joint maximization of the pseudo-likelihood function in the second step is shown to be equivalent to the minimizer of a weighted cross-validation function. From this we deduce that $n^{1/5} \widehat{h}$ converges to a positive constant, in probability. In particular, our optimal bandwidth $\widehat{h}$ has the rate expected when
estimating a twice differentiable regression function nonparametrically. We conduct a simulation experiment in which the data were generated using a Poisson single-index regression model with multiplicative unobserved heterogeneity. The simulation confirms the significant advantage of estimators that incorporate the information on the conditional variance. We also applied our semiparametric approach to a benchmark real count data set and we obtain a much better fit than the standard parametric regression models for count data.

\newpage

\appendix

\section{Appendix: Assumptions}

\label{assu}\setcounter{equation}{0}

\setcounter{appen}{0}

Let $\Theta =\left\{ 1\right\} \times \widetilde{\Theta }$ with $\widetilde{%
\Theta }$ a compact subset of $\mathbb{R}^{d-1}$ with nonvoid interior.
Depending on the context, $\Theta $ is considered a subset of $\mathbb{R}%
^{d-1}$ or a subset of $\mathbb{R}^{d}.$

\begin{assumption}
\label{asiid}The observations $\left( Y_{1},Z_{1}^{T}\right) ^{T},\ldots
,\left( Y_{n},Z_{n}^{T}\right) ^{T}$ are independent copies of a random
vector $\left( Y,Z^{T}\right) ^{T}\in \mathbb{R}^{d+1}.$
\end{assumption}

\begin{assumption}
Let $r\left( t;\theta \right) =E\left(  Y\mid Z^{T}\theta =t\right)  .$ There
exists a unique $\theta _{0}$ interior point of $\Theta $ such that $E\left(
Y\mid Z\right) =E\left( Y\mid Z^{T}\theta _{0}\right) =r\left( Z^{T}\theta
_{0};\theta _{0}\right) .$
\end{assumption}

\begin{assumption}
\label{as41}For every $\theta \in \Theta $, the random variable $Z^{T}\theta
$ admits a density $f(\cdot ;\theta )$ with respect to the Lebesgue measure
on $\mathbb{R}$.
\end{assumption}

\begin{assumption}
\label{as42}$E\left[ \exp \left( \lambda \left\| Z\right\| \right) \right]
<\infty ,$ for some $\lambda >0$. Moreover, $E(Y^{4+\varepsilon })<\infty ,$ for some $\varepsilon >0$.
\end{assumption}

\begin{assumption}
\label{colin} With probability one, the matrix $(1,Z^{T})^{T}(1,Z^{T})$ is
positive definite.
\end{assumption}

\begin{assumption}
\label{cc} There exists $c_{0}>0$ and a positive integer $k_{0}$ such that,
for any $\theta \in \Theta $ and $0<c\leq c_{0}$, the set $\left\{
t:f(t;\theta)=c\right\} $ has at most $k_{0}$ elements.
\end{assumption}

The last two assumptions ensure that $P\left( f(Z^{T}\theta _{0};\theta
_{0})=c\right) =0,$ for any $0<c\leq c_{0}.$

\quad

\textbf{CONDITION L} A function $g:\Theta \times \mathbb{R}\rightarrow
\mathbb{R}$ is said to satisfy \emph{Condition L} if, for any $\Lambda $ a
compact set on the real line, there exists $B>0$ and $b\in (0,1]$ such that
\begin{equation*}
\left| g\left( \theta ,t\right) -g\left( \theta ^{\prime },t^{\prime
}\right) \right| \leq B\left\| \left( \theta ,t\right) -\left( \theta
^{\prime },t^{\prime }\right) \right\| ^{b},\quad \quad \theta ,\theta
^{\prime }\in \Theta ,\quad t,t^{\prime }\in \Lambda .
\end{equation*}

\begin{assumption}
\label{as45} a) The function $\left( \theta ,t\right) \rightarrow f\left(
t;\theta \right) \geq 0,$ $\theta \in \Theta ,$ $t\in \mathbb{R},$ satisfies
a Lipschitz condition, that is there exists $a\in (0,1]$ and $C>0$ such that
\begin{equation*}
\left| f\left( t;\theta \right) -f\left( t^{\prime };\theta ^{\prime
}\right) \right| \leq C\,\left\| \left( \theta ,t\right) -\left( \theta
^{\prime },t^{\prime }\right) \right\| ^{a}\qquad for\quad \theta ,\theta
^{\prime }\in \Theta \qquad and\quad t,t^{\prime }\in \mathbb{R}.
\end{equation*}

b) The function $\left( \theta ,t\right) \rightarrow r\left( t;\theta
\right) ,$ $\theta \in \Theta ,$ $t\in \mathbb{R},$ satisfies \emph{%
Condition L}$.$

c) For any $\theta \in \Theta ,$ the functions $t\rightarrow \gamma \left(
t;\theta \right) $ and $t\rightarrow f\left( t;\theta \right) $ are twice
differentiable.\ Let $\gamma ^{\prime \prime }\left( t;\theta \right) $ and $%
f^{\prime \prime }\left( t;\theta \right) $ denote the second order
derivatives. The functions $\left( \theta ,t\right) \rightarrow \gamma
^{\prime \prime }\left( t;\theta \right) $ and $\left( \theta ,t\right)
\rightarrow f^{\prime \prime }\left( t;\theta \right) ,$ $\theta \in \Theta
, $ $t\in \mathbb{R},$ satisfy \emph{Condition L} with $b=1$\emph{.}

d) For any $\theta \in \Theta $ and any component $Z^{(j)}$ of $Z,$ the
functions $t\rightarrow E\left( Z^{(j)}\,|\,Z^{T}\theta =t\right) $ and $%
t\rightarrow E\left( Y\,Z^{(j)}\,|\,Z^{T}\theta =t\right) $ are twice
differentiable and their second order derivatives satisfy \emph{Condition L}
with $b=1$\emph{.}

e) For any $t\in \mathbb{R},$ the function $\theta \rightarrow r\left(
t;\theta \right) $ is twice continuously differentiable and, for any $\theta
\in \Theta ,$ the functions $t\rightarrow \partial _{\theta }r\left(
t;\theta \right) $ and $t\rightarrow \partial _{\theta \theta }^{2}r\left(
t;\theta \right) $ are continuous. Moreover, the function $\left( \theta
,t\right) \rightarrow \partial _{\theta }r\left( t;\theta \right) $ satisfy
\emph{Condition L} with $b=1$\emph{.}
\end{assumption}

Let $v\left( t;\theta \right) =Var\left( Y\mid Z^{T}\theta =t\right) $ be
the conditional variance of $Y$ given $Z^{T}\theta =t.$

\begin{assumption}
\label{as46}The function $(\theta ,t)\rightarrow v\left( t;\theta \right) $
satisfies \emph{\ Condition L}.
\end{assumption}

Consider the functions $B,C:R\times N\rightarrow \mathbb{R}$, with $\mathcal{%
Y}$, $R,$ $N\subset \mathbb{R}$. Define $\Lambda =\bigcup_{\theta \in \Theta
}\{t:f(t;\theta )\geq c\}$, with $c$, $\delta >0$, and
\begin{equation*}
D(c,\delta )=\{r:\exists \,(\theta ,\,t)\in \Theta \times \Lambda \,\,\,%
\text{such that}\,\,|r-r(t;\theta )|\leq \delta \}.
\end{equation*}

\begin{assumption}
\label{asssup}If $c>0$, there exists $\delta >0$ such that $D(c,\delta )$ is
strictly included in $R$.
\end{assumption}

\begin{assumption}
\label{as410}The kernel function $K\left( \cdot \right) $ is differentiable,
symmetric, positive and compactly supported. Moreover, $K\left( \cdot
\right) $ and the derivative $K^{\prime }\left( \cdot \right) $ are of
bounded variation.
\end{assumption}

\quad

Up to a term depending only on $y$ and $\alpha ,$ the three arguments
function $\psi \left( \cdot ,\cdot ;\cdot \right) $ involved in equation (%
\ref{deeff}) is defined as
\begin{equation*}
\psi \left( y,r;\alpha \right) =B(r,\alpha )+C(r,\alpha )y
\end{equation*}%
where $l(y\mid r,\alpha )=\exp \left[ B(r,\alpha )+C(r,\alpha )y+D(y,\alpha )%
\right] $ is a LEFN density with mean $r$ and variance $\left[ \partial
_{r}C(r,\alpha )\right] ^{-1}.$

\quad

\begin{assumption}
\label{as49} The functions $B\left( r,\alpha \right) $ and $C\left( r,\alpha
\right) $ are twice differentiable in the first argument. Moreover, for any $%
c$ and $\delta >0$ for which $D(c,\delta )$ is strictly included in $R$,
there exists a constant $M$ such that
\begin{equation*}
\sup_{r\in D(c,\delta ),\,\alpha \in N}\left( |\partial _{rr}^{2}G(r,\alpha
)|+|\partial _{r}G(r,\alpha )|\right) \leq M,
\end{equation*}%
\begin{equation*}
\sup_{r,r^{\prime }\in D(c,\delta ),\,\alpha ,\alpha ^{\prime }\in N}\left|
\partial _{rr}^{2}G(r,\alpha )-\partial _{rr}^{2}G(r^{\prime },\alpha
^{\prime })\right| \leq M\left( |r-r^{\prime }|+\left| \alpha -\alpha
^{\prime }\right| \right) ,
\end{equation*}%
where $G$ stands for $B$ or $C$. The functions $\partial _{r}B(r;\alpha ) $
and $\partial _{r}C(r;\alpha )$ are continuously differentiable in $\alpha .$
\end{assumption}

\begin{assumption}
\label{as51}For any $c$ and $\delta >0$ for which $D(c,\delta )$ is strictly
included in $R,$ we have $\partial _{r}C(r,\alpha )>0,$ $\forall r\in
D(c,\delta ),$ $\forall \alpha \in N.$
\end{assumption}

\quad

Assumption \ref{as51} ensures that the $(d-1)\times (d-1)$ matrix%
\begin{eqnarray*}
J &=&-E\left[ \partial _{\theta \theta ^{T}}^{2}\psi \left( Y,r\left(
Z^{T}\theta _{0};\theta _{0}\right) ;\;\alpha ^{\ast }\right) \ I_{A}\left(
Z\right) \right] \\
&=&E\left[ \partial _{r}C\left( r\left( Z^{T}\theta _{0};\theta _{0}\right)
;\alpha ^{\ast }\right) \partial _{\theta }r\left( Z^{T}\theta _{0};\theta
_{0}\right) \partial _{\theta }r\left( Z^{T}\theta _{0};\theta _{0}\right)
^{T}I_{A}\left( Z\right) \right]
\end{eqnarray*}%
is positive definite.

Let us notice that the asymptotic results remain valid even if the function $%
\psi \left( y,r;\alpha \right) $ is not the logarithm of a LEFN.\ It
suffices to adapt Assumption \ref{as49}, to suppose that there exists $%
F\left( \cdot ;\cdot \right) $ such that $\psi \left( y,r;\alpha \right)
\leq F\left( y;\alpha \right) ,$ $\forall r\in R,$ to ensure that $J$ is
positive definite and to assume that, for any $\alpha$, $E\left[ \partial
_{2}\psi \left( Y,\;r\left( Z^{T}\theta _{0};\theta _{0}\right) ;\;\alpha
\right) \mid Z\right] =0$ and
\begin{equation*}
E\left[ \partial _{\theta }\partial _{2}\psi \left( Y,\;r\left( Z^{T}\theta
_{0};\theta _{0}\right) ;\;\alpha \right) \mid Z^{T}\theta _{0}\right] =0 .
\end{equation*}

\section{Appendix: Technical lemmas}

\label{appconv}

\setcounter{appen}{0} \setcounter{equation}{0}

Let $H_{n}=\left[ n^{-(1/2-\varepsilon )},\,n^{-\varepsilon
}\right] ,$ with $0<\varepsilon <1/2,$ and $\Theta _{n}=\left\{
\theta :\left\| \theta -\theta _{0}\right\| \leq d_{n}\right\} $,
with $ d_{n}\rightarrow 0 $. We use $C$ to denote a positive
constant, not necessarily the same at each occurrence.

\begin{lemma}
\label{lemrate} Assume that the kernel $K$ is a symmetric, positive,
compactly supported function of bounded variation. Suppose that the map $%
\left( \theta ,t\right) \rightarrow f\left( t;\theta \right) \geq 0,$ $%
\theta \in \Theta ,$ $t\in \mathbb{R},$ satisfies a Lipschitz condition,
that is there exists $a\in (0,1]$ and $C>0$ such that
\begin{equation}
\left| f\left( t_{1};\theta _{1}\right) -f\left( t_{2};\theta _{2}\right)
\right| \leq C\,\left\| \left( \theta _{1},t_{1}\right) -\left( \theta
_{2},t_{2}\right) \right\| ^{a}\qquad for\quad \theta _{1},\theta _{2}\in
\Theta \qquad and\quad t_{1},t_{2}\in \mathbb{R}.  \label{lip1}
\end{equation}
Then
\begin{equation*}
\max_{1\leq i\leq n}\,\sup_{\theta ,\,z,\,h\in H_{n}}\left| \widehat{f}%
_{h}^{i}\left( z^{T}\theta ;\theta \right) -f\left( z^{T}\theta ;\theta
\right) \right| =O_{P}\left( h^{-1}n^{-1/2}\right) +O\left( h^{a}\right) .
\end{equation*}
\end{lemma}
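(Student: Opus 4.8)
\emph{Proof proposal.} The plan is to decompose the error into a leave-one-out correction, a centred stochastic part and a bias part, and to show that the first two are $o_P(h^{-1}n^{-1/2})$ and the last is $O(h^a)$, all uniformly in $(i,\theta,z,h)$. Write $\widehat{f}_h(t;\theta)=\frac1n\sum_{j=1}^nK_h(t-Z_j^T\theta)$ for the ordinary kernel estimator; since the observations are i.i.d., $E\widehat{f}_h^i(t;\theta)=E\widehat{f}_h(t;\theta)=\int K(u)f(t-hu;\theta)\,du$, and with $t=z^T\theta$,
\begin{equation*}
\widehat{f}_h^i(t;\theta)-f(t;\theta)=\bigl[\widehat{f}_h^i(t;\theta)-\widehat{f}_h(t;\theta)\bigr]+\bigl[\widehat{f}_h(t;\theta)-E\widehat{f}_h(t;\theta)\bigr]+\bigl[E\widehat{f}_h(t;\theta)-f(t;\theta)\bigr].
\end{equation*}
The leave-one-out term is deterministically negligible, $|\widehat{f}_h^i-\widehat{f}_h|\le 2\|K_h\|_\infty/n=2\|K\|_\infty(nh)^{-1}$ for all $i,t,\theta,h$, and $(nh)^{-1}=n^{-1/2}\cdot(h^{-1}n^{-1/2})=o(h^{-1}n^{-1/2})$. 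For the bias term, since $K$ integrates to one and $f(\cdot;\theta)$ is $a$-Hölder with a $\theta$-free constant $C$ by (\ref{lip1}) (take $\theta_1=\theta_2$),
\begin{equation*}
\bigl|E\widehat{f}_h(t;\theta)-f(t;\theta)\bigr|=\Bigl|\int K(u)\bigl(f(t-hu;\theta)-f(t;\theta)\bigr)\,du\Bigr|\le C h^a\int|K(u)|\,|u|^a\,du=O(h^a)
\end{equation*}
uniformly in $t,\theta$, the integral being finite because $K$ is bounded with compact support.

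The core of the argument is the uniform estimate $\sup_{\theta\in\Theta,\,t\in\mathbb{R},\,h\in H_n}|\widehat{f}_h(t;\theta)-E\widehat{f}_h(t;\theta)|=o_P(h^{-1}n^{-1/2})$. Fix $h$ and consider $\mathcal{F}_h=\{z\mapsto K_h(t-z^T\theta):t\in\mathbb{R},\,\theta\in\Theta\}$. Since $K$ has bounded variation it is a difference of two bounded monotone functions, composed here with the finite-dimensional family of affine maps $z\mapsto(t-z^T\theta)$; hence $\mathcal{F}_h$ has polynomial uniform covering numbers with an exponent not depending on $h$ or $n$, and the constant envelope $h^{-1}\|K\|_\infty$. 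Moreover (\ref{lip1}) forces each $f(\cdot;\theta)$ to be bounded by a constant depending only on $C$ and $a$ (an $a$-Hölder probability density on $\mathbb{R}$ cannot exceed $(2C)^{1/(a+1)}$, since otherwise it would stay above half its peak on too long an interval), so, using positivity of $K$ and $\int K=1$, $\sup_{t,\theta}Var(K_h(t-Z^T\theta))\le E[K_h(t-Z^T\theta)^2]\le C'h^{-1}$. A standard maximal/concentration inequality for a uniformly bounded class with polynomial covering numbers (Dudley's entropy bound together with Talagrand's inequality) then yields, on an event of probability at least $1-n^{-2}$,
\begin{equation*}
\sup_{t\in\mathbb{R},\,\theta\in\Theta}\bigl|\widehat{f}_h(t;\theta)-E\widehat{f}_h(t;\theta)\bigr|\le C\Bigl(\sqrt{(\log n)/(nh)}+(\log n)/(nh)\Bigr).
\end{equation*}
Over $h\in H_n=[n^{-(1/2-\varepsilon)},n^{-\varepsilon}]$ both terms are $o(h^{-1}n^{-1/2})$, because $\sqrt{(\log n)/(nh)}\cdot hn^{1/2}=\sqrt{h\log n}\le\sqrt{n^{-\varepsilon}\log n}\to0$ and $(\log n)/(nh)\cdot hn^{1/2}=n^{-1/2}\log n\to0$.

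To promote this to uniformity in $h$ as well, I would discretize $H_n$ by a grid $h_1<\dots<h_N$ with $N=\lceil n^{3}\rceil$ (consecutive spacings at most $2n^{-3}$): a union bound over the polynomially many grid points keeps the good event of probability tending to one and only changes constants, and for $h\in[h_k,h_{k+1}]$ the oscillation is controlled deterministically via $|\partial_{h'}K_{h'}(u)|=|{-}h'^{-2}K(u/h')-h'^{-3}uK'(u/h')|\le C h'^{-2}$ (valid because $K,K'$ are bounded and compactly supported, so $|u/h'|$ is bounded on the support of $K(\cdot/h')$), giving $\sup_{t,\theta}|\widehat{f}_h(t;\theta)-\widehat{f}_{h_k}(t;\theta)|\le C|h-h_k|h_k^{-2}\le C n^{-3}\cdot n^{1-2\varepsilon}=o(h^{-1}n^{-1/2})$. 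Collecting the three pieces gives the lemma (indeed with the first two contributions $o_P(h^{-1}n^{-1/2})$, which is stronger than stated).

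The hard part will be the empirical-process estimate for $\mathcal{F}_h$: establishing the polynomial covering bound from the bounded-variation hypothesis, and, crucially, extracting the sharp $h^{-1}$ (rather than $h^{-2}$) variance factor, since it is precisely this that prevents a spurious $\sqrt{\log n}$ from surviving in the final rate. The bias bound and the leave-one-out comparison are entirely routine, and the passage from fixed $h$ to $h\in H_n$ is a standard chaining-in-the-bandwidth argument.
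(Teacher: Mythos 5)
The paper itself omits the proof of this lemma, stating only that it ``can be distilled from'' Andrews (1995), Sherman (1994b) and Delecroix, Hristache and Patilea (2006); the intended machinery is uniform rates for empirical processes indexed by Euclidean families in the sense of Pakes and Pollard (1989) and Sherman (1994a). Your proposal follows exactly that route: the bias / centred-process / leave-one-out decomposition, the Euclidean property of $\{z\mapsto K_h(t-z^{T}\theta)\}$ deduced from the bounded variation of $K$ composed with affine maps, and a maximal inequality with the constant envelope $h^{-1}\Vert K\Vert_\infty$. Your bias and leave-one-out bounds are correct, and the observation that (\ref{lip1}) forces $\sup_{t,\theta}f(t;\theta)\leq(2C)^{1/(a+1)}$ is a nice touch. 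Two remarks. First, for the \emph{stated} rate $O_{P}(h^{-1}n^{-1/2})$ the crude envelope-only maximal inequality already suffices: applying it to the single class $\{z\mapsto K((t-z^{T}\theta)/h):\,t\in\mathbb{R},\,\theta\in\Theta,\,h\in H_{n}\}$, which has constant envelope $\Vert K\Vert_\infty$ and is still Euclidean, gives $\sup_{t,\theta,h}h\,|\widehat{f}_{h}-E\widehat{f}_{h}|=O_{P}(n^{-1/2})$ in one stroke, dispensing with both the variance localization and the bandwidth grid; the Talagrand refinement only buys the (unneeded) upgrade to $o_{P}$. Second, your union bound does not close as written: with $N=\lceil n^{3}\rceil$ grid points each controlled only on an event of probability $1-n^{-2}$, the exceptional probability is bounded by $n^{3}\cdot n^{-2}=n$, which is vacuous. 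This is repaired immediately by taking the deviation parameter in Talagrand's inequality to be $C\log n$ with $C>3$, so each grid point fails with probability $O(n^{-C})$ at the cost of a constant factor in front of $\sqrt{(\log n)/(nh)}$, which is still $o(h^{-1}n^{-1/2})$ on $H_{n}$. With that correction (and noting that the same $|h-h_{k}|h_{k}^{-2}$ oscillation bound must also be applied to $E\widehat{f}_{h}$), your proof is complete and delivers a conclusion slightly stronger than the lemma.
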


\quad

The proof of Lemma \ref{lemrate} can be distilled from many existing results
(e.g., Andrews (1995), Sherman (1994b), Delecroix, Hristache and Patilea
(2006)) and therefore it will be omitted.

\begin{lemma}
\label{idic} a) If $\delta >0,$ then
\begin{equation*}
\sup_{\theta \in \Theta _{n},\,h\in H_{n}}\left| I_{\left\{ z:\,\widehat{f}%
_{h}^{i}\left( z^{T}\theta ;\theta \right) \geq c\right\}
}(Z_{i})-I_{A}(Z_{i})\right| \leq I_{A^{\delta }}(Z_{i})+I_{(\delta ,\infty
)}(G_{n}),\qquad 1\leq i\leq n,
\end{equation*}
where $A^{\delta }=\left\{ z:\left| f\left( z^{T}\theta _{0};\theta
_{0}\right) -c\right| \leq \delta \right\} $ and
\begin{equation*}
G_{n}=\max_{1\leq i\leq n}\,\sup_{\theta \in \Theta _{n},\,h\in
H_{n}}\,\left| \widehat{f}_{h}^{i}\left( Z_{i}^{T}\theta ;\theta \right)
-f\left( Z_{i}^{T}\theta _{0};\theta _{0}\right) \right| .
\end{equation*}

b) Suppose that $K\left( \cdot \right) $ and $f\left( \cdot ;\cdot \right) $
satisfy the assumptions of Lemma \ref{lemrate} for some $a,C>0$. Moreover,
assume that either i) $Z$ is bounded and $d_{n}\rightarrow 0$ or, ii) $E%
\left[ \exp \left( \lambda \left\| Z\right\| \right) \right] <\infty $ for
some $\lambda >0$ and $d_{n}=o\left( 1/\ln n\right) $ (with $d_{n}$ from the
definition of $\Theta _{n}$)$.$ Let $\delta _{n}\rightarrow 0$ such that $%
\delta _{n}/n^{-a\varepsilon }\rightarrow \infty $ and either i) $\delta
_{n}d_{n}^{-a}\rightarrow \infty $ if $Z$ is bounded or, ii) $\delta _{n}%
\left[ d_{n}\ln n\right] ^{-a}\rightarrow \infty $. Then $I_{(\delta
_{n},\infty )}(G_{n})=o_{P}\left( n^{-\alpha }\right) ,$ $\forall \alpha >0.$
\end{lemma}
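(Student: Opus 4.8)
\emph{Part a)} is deterministic. Fix $i$ and $(\theta,h)\in\Theta_n\times H_n$, and write $\widehat f=\widehat f_h^i(Z_i^T\theta;\theta)$ and $f_0=f(Z_i^T\theta_0;\theta_0)$. The difference $|I_{\{z:\widehat f_h^i(z^T\theta;\theta)\ge c\}}(Z_i)-I_A(Z_i)|$ is nonzero only when $\widehat f$ and $f_0$ lie on opposite sides of $c$. If $\widehat f\ge c$ and $f_0<c$, then either $c-\delta\le f_0$, so $|f_0-c|\le\delta$ and $Z_i\in A^\delta$, or $f_0<c-\delta$, so $\widehat f-f_0>\delta$; symmetrically, if $\widehat f<c$ and $f_0\ge c$, then either $Z_i\in A^\delta$ or $f_0-\widehat f>\delta$. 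Thus in every case of disagreement $Z_i\in A^\delta$ or $|\widehat f-f_0|>\delta$; since $|\widehat f-f_0|\le G_n$ and $I_{(\delta,\infty)}$ is nondecreasing, $|I_{\{z:\widehat f_h^i(z^T\theta;\theta)\ge c\}}(Z_i)-I_A(Z_i)|\le I_{A^\delta}(Z_i)+I_{(\delta,\infty)}(G_n)$, and since the right-hand side does not depend on $(\theta,h)$, taking the supremum over $\Theta_n\times H_n$ proves a).

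\emph{Part b).} I would establish the stronger statement $P(G_n>\delta_n)=o(n^{-\alpha})$ for every $\alpha>0$, which implies $I_{(\delta_n,\infty)}(G_n)=o_P(n^{-\alpha})$. In case ii) set $M_n=(\alpha+2)\lambda^{-1}\ln n$ (in case i) let $M_n$ be a bound for $\|Z\|$); by Markov's inequality and Assumption \ref{as42}, $P(\max_{i\le n}\|Z_i\|>M_n)\le n\,e^{-\lambda M_n}E[e^{\lambda\|Z\|}]=o(n^{-\alpha})$, so it suffices to bound $P(\{G_n>\delta_n\}\cap\mathcal E_n)$ with $\mathcal E_n=\{\max_{i\le n}\|Z_i\|\le M_n\}$. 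Write $G_n\le G_n^{(1)}+G_n^{(2)}$, with $G_n^{(1)}=\max_{i\le n}\sup_{\theta\in\Theta_n,\,h\in H_n}|\widehat f_h^i(Z_i^T\theta;\theta)-f(Z_i^T\theta;\theta)|$ and $G_n^{(2)}=\max_{i\le n}\sup_{\theta\in\Theta_n}|f(Z_i^T\theta;\theta)-f(Z_i^T\theta_0;\theta_0)|$. By the Lipschitz condition (\ref{lip1}), on $\mathcal E_n$ one has $G_n^{(2)}\le C(d_n(1+M_n))^a$, which is at most $\delta_n/4$ for $n$ large: in case i) because $\delta_n d_n^{-a}\to\infty$, and in case ii) because $M_n$ is of order $\ln n$ and $\delta_n[d_n\ln n]^{-a}\to\infty$.

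For $G_n^{(1)}$, put $\bar f_h(t;\theta)=E[K_h(t-Z^T\theta)]$. The bias satisfies $|\bar f_h(t;\theta)-f(t;\theta)|\le Ch^a\le Cn^{-a\varepsilon}\le\delta_n/4$ eventually, since $h\le n^{-\varepsilon}$ and $\delta_n n^{a\varepsilon}\to\infty$. For the fluctuation, $\widehat f_h^i(z^T\theta;\theta)$ is, for fixed $(z,\theta,h)$, an average of $n-1$ i.i.d.\ variables bounded by $\|K\|_\infty/h$ with variance $O(1/h)$, so Bernstein's inequality and $\delta_n\to0$ give, for $n$ large,
\begin{equation*}
P\bigl(|\widehat f_h^i(z^T\theta;\theta)-\bar f_h(z^T\theta;\theta)|>\delta_n/4\bigr)\le 2\exp(-c\,nh\,\delta_n^2),
\end{equation*}
and since $h\ge n^{-(1/2-\varepsilon)}$ and $\delta_n^2\ge n^{-2a\varepsilon}$ for $n$ large, $nh\,\delta_n^2\ge c' n^{1/2+\varepsilon(1-2a)}$, the exponent $1/2+\varepsilon(1-2a)$ being positive because $a\le1$ forces $\varepsilon(1-2a)\ge-\varepsilon>-1/2$. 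Finally, since $K$ is differentiable and $K,K'$ are of bounded variation, hence bounded (Assumption \ref{as410}), and $h\ge n^{-(1/2-\varepsilon)}$, the maps $(z,\theta,h)\mapsto\widehat f_h^i(z^T\theta;\theta)$ and $(z,\theta,h)\mapsto\bar f_h(z^T\theta;\theta)$ are Lipschitz on $\{\|z\|\le M_n\}\times\Theta\times H_n$ with a constant $L_n$ that is a power of $n$ (the partial derivatives pick up factors $h^{-2}\le n^{1-2\varepsilon}$ and $\|z\|\vee\|\theta\|\le CM_n$). Covering $\{\|z\|\le M_n\}\times\Theta_n\times H_n$ by a grid of mesh $\rho_n=\delta_n/(4L_n)$ — which has only polynomially many points, $\delta_n$ being at worst polynomially small — and taking a union bound over the grid and over $i\le n$, we get that with probability $1-o(n^{-\alpha})$ the inequality $|\widehat f_h^i(z^T\theta;\theta)-\bar f_h(z^T\theta;\theta)|\le\delta_n/4$ holds at every grid point and every $i$, because $\exp(-c' n^{1/2+\varepsilon(1-2a)})$ dominates any power of $n$. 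On that event, interpolating with the Lipschitz bound adds at most $L_n\rho_n=\delta_n/4$, so $G_n^{(1)}\le\delta_n/4+\delta_n/4+\delta_n/4$. Together with $G_n^{(2)}\le\delta_n/4$ on $\mathcal E_n$ this yields $G_n\le\delta_n$ off an event of probability $o(n^{-\alpha})$, which proves the claim. (If only $P(G_n>\delta_n)\to0$ were needed, the exponential inequality could be replaced by Lemma \ref{lemrate} together with $\delta_n n^{a\varepsilon}\to\infty$ and the stated conditions on $d_n$.)

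\emph{The main obstacle} is the uniform-in-$(\theta,z,h)$ exponential bound: one must check simultaneously that the covering number of $\{\|z\|\le M_n\}\times\Theta_n\times H_n$ and the Lipschitz constant $L_n$ stay polynomial in $n$, while the per-point Bernstein exponent $nh\,\delta_n^2$ still grows like a positive power of $n$ — which is exactly where the constraints $h\ge n^{-(1/2-\varepsilon)}$, $\delta_n n^{a\varepsilon}\to\infty$ and $a\le1$ are used — and, in the unbounded-$Z$ case, that the logarithmic inflation of $\max_{i\le n}\|Z_i\|$ is absorbed by $d_n=o(1/\ln n)$ through $\delta_n[d_n\ln n]^{-a}\to\infty$.
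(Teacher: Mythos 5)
Your part a) is the paper's argument verbatim in a different notation: split the disagreement event according to whether $f(Z_i^T\theta_0;\theta_0)$ is within $\delta$ of $c$ or not. For part b) you are correct, but you take a genuinely different and strictly harder route. The paper's proof begins with the observation that $I_{(\delta_n,\infty)}(G_n)$ is $\{0,1\}$-valued, so $I_{(\delta_n,\infty)}(G_n)=o_P(n^{-\alpha})$ for every $\alpha>0$ is \emph{equivalent} to the rate-free statement $P(G_n>\delta_n)\to 0$; it then gets this from the same decomposition $G_n\le G_n^{(1)}+G_n^{(2)}$ you use, but handles $G_n^{(1)}$ simply by citing the $O_P(h^{-1}n^{-1/2})+O(h^a)=O_P(n^{-\varepsilon})+O(n^{-a\varepsilon})$ bound of Lemma \ref{lemrate} (no tail rate needed), and handles $G_n^{(2)}$ by the same exponential-moment union bound you use for $\max_i\|Z_i\|$. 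You instead prove the stronger claim $P(G_n>\delta_n)=o(n^{-\alpha})$, which forces you to replace the citation of Lemma \ref{lemrate} by an explicit Bernstein-plus-chaining argument with polynomial covering numbers; your parenthetical remark shows you saw the easier route but did not notice that it already suffices for the stated conclusion. Your exponential argument is sound (the exponent $nh\delta_n^2\gtrsim n^{1/2+\varepsilon(1-2a)}$ is indeed a positive power of $n$, and the covering numbers and Lipschitz constants are polynomial on the event $\max_{j\le n}\|Z_j\|\le M_n$), up to one bookkeeping slip: interpolating from a grid point costs $L_n\rho_n$ for $\widehat f_h^i$ \emph{and} $L_n\rho_n$ for $\bar f_h$, i.e.\ $2L_n\rho_n=\delta_n/2$ with your mesh, so your budget of four quarters of $\delta_n$ overflows; shrinking the mesh to $\rho_n=\delta_n/(8L_n)$ fixes this trivially. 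What your approach buys is an actual rate for $P(G_n>\delta_n)$, which the lemma does not ask for; what the paper's approach buys is a two-line reduction that avoids redoing uniform convergence theory from scratch.
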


\begin{proof}
a) We have
\begin{equation*}
\left| I_{\left\{ z:\,\widehat{f}_{h}^{i}\left( z^{T}\theta ;\theta \right)
\geq c\right\} }(Z_{i})-I_{A}(Z_{i})\right| \leq I_{\left\{ z:\,\widehat{f}%
_{h}^{i}\left( z^{T}\theta ;\theta \right) \geq c\right\} \setminus
A}(Z_{i})+I_{A\setminus \left\{ z:\,\widehat{f}_{h}^{i}\left( z^{T}\theta
;\theta \right) \geq c\right\} }(Z_{i}).
\end{equation*}
For any $\theta ,h$ and $\delta ,$ we can write
\begin{equation*}
\left\{ \widehat{f}_{h}^{i}\left( Z_{i}^{T}\theta ;\theta \right) \!\geq
c\right\} \!\setminus \!A\!\subset \!\left\{ \widehat{f}_{h}^{i}\left(
Z_{i}^{T}\theta ;\theta \right) \!\geq c,\,f\left( Z_{i}^{T}\theta
_{0};\theta _{0}\right) \!<c\!-\!\delta \right\} \!\cup \!\left\{
c\!-\!\delta \leq \!f\left( Z_{i}^{T}\theta _{0};\theta _{0}\right)
\!<c\right\}
\end{equation*}
and
\begin{equation*}
A\!\setminus \!\left\{ \widehat{f}_{h}^{i}\left( Z_{i}^{T}\theta ;\theta
\right) \geq c\right\} \!\subset \!\left\{ \widehat{f}_{h}^{i}\left(
Z_{i}^{T}\theta ;\theta \right) \!<c,\,f\left( Z_{i}^{T}\theta _{0};\theta
_{0}\right) \!\geq c\!+\!\delta \right\} \!\cup \!\left\{ c\leq \!f\left(
Z_{i}^{T}\theta _{0};\theta _{0}\right) \!<c\!+\!\delta \right\}
\end{equation*}
which proves the inequality.\

b) It suffices to prove that $P\left( G_{n}>\delta _{n}\right) \rightarrow
0. $ First consider the case of unbounded $Z.$ Note that, for any $z$ and $%
\theta ,$
\begin{equation*}
\left| f\!\left( z^{T}\theta _{1};\theta _{1}\right) \!-f\!\left(
z^{T}\theta _{2};\theta _{2}\right) \right| \!\leq \!C\left( \left|
z^{T}\theta _{1}-z^{T}\theta _{2}\right| ^{2}+\!\left\| \theta _{1}-\theta
_{2}\right\| ^{2}\right) ^{a/2}\!\leq \!C\left( 1\!+\!\left\| z\right\|
\right) ^{a}\left\| \theta _{1}-\theta _{2}\right\| ^{a}\!.
\end{equation*}%
Combine this inequality and Lemma \ref{lemrate} and write
\begin{eqnarray*}
G_{n}\! &\leq &\max_{1\leq i\leq n}\,\sup_{\theta \in \Theta ,\,h\in
H_{n}}\,\left| \widehat{f}_{h}^{i}\left( Z_{i}^{T}\theta ;\theta \right)
-f\left( Z_{i}^{T}\theta ;\theta \right) \right| \\
&&+\max_{1\leq i\leq n}\sup_{\theta \in \Theta _{n}}\,\left| f\left(
Z_{i}^{T}\theta ;\theta \right) -f\left( Z_{i}^{T}\theta _{0};\theta
_{0}\right) \right| \\
&\leq &\max_{1\leq i\leq n}\sup_{\theta \in \Theta ,\,h\in
H_{n},\,z}\,\left| \widehat{f}_{h}^{i}\left( z^{T}\theta ;\theta \right)
-f\left( z^{T}\theta ;\theta \right) \right| +C\left\| \theta -\theta
_{0}\right\| ^{a}\,\max_{1\leq i\leq n}\,\left( 1+\left\| Z_{i}\right\|
\right) ^{a} \\
&=&O\left( n^{-a\varepsilon }\right) +O_{P}\left( n^{-\varepsilon }\right)
+O\left( d_{n}^{a}\right) \max_{1\leq i\leq n}\,\left( 1+\left\|
Z_{i}\right\| \right) ^{a}.
\end{eqnarray*}%
On the other hand, we can write
\begin{eqnarray*}
P\left( d_{n}^{a}\max_{1\leq i\leq n}\,\left( 1+\left\| Z_{i}\right\|
\right) ^{a}>\delta _{n}\right) &\leq &\sum_{i=1}^{n}P\left( \left(
1+\left\| Z_{i}\right\| \right) ^{a}>\delta _{n}/d_{n}^{a}\right) \\
&=&nP\left[ \exp \left( \lambda (1+\left\| Z_{i}\right\| )\right) >\exp
\left( \lambda \delta _{n}^{1/a}/d_{n}\right) \right] \\
&\leq &n\frac{e^{\lambda }E\left[ \exp \left( \lambda \left\| Z_{i}\right\|
\right) \right] }{\exp \left( \lambda \delta _{n}^{1/a}/d_{n}\right) }.
\end{eqnarray*}%
Since $\delta _{n}^{1/a}/\left( d_{n}\ln n\right) $ and $\delta
_{n}/n^{-a\varepsilon }\rightarrow \infty ,$ deduce that $P\left(
G_{n}>\delta _{n}\right) \rightarrow 0.$

If $Z$ lies in a compact, condition (\ref{lip1}) implies that for any $z$ in
the support of $Z$,
\begin{equation*}
\left| f\left( z^{T}\theta _{1};\theta _{1}\right) -f\left( z^{T}\theta
_{2};\theta _{2}\right) \right| \leq C\left\| \theta _{1}-\theta
_{2}\right\| ^{a},\qquad \theta _{1},\theta _{2}\in \Theta ,
\end{equation*}
with $C>0$ some constant independent of $z.$ In this case
\begin{equation*}
G_{n}=O\left( n^{-a\varepsilon }\right) +O_{P}\left( n^{-\varepsilon
}\right) +O\left( d_{n}^{a}\right) .
\end{equation*}
Thus $P\left( G_{n}>\delta _{n}\right) \rightarrow 0$ provided that $\delta
_{n}\rightarrow 0$ such that $\delta _{n}/n^{-a\varepsilon }$ and $\delta
_{n}/d_{n}^{a}\rightarrow \infty $
\end{proof}

\quad

The proofs of the following three lemmas are lengthy and technical.\ These
proofs are provided in Delecroix, Hristache and Patilea (2006) and therefore
it will be omitted herein. The key ingredients for the three proofs are the
results on uniform rates of convergence for $U-$processes indexed by
Euclidean families; see Sherman (1994a). See also Pakes and Pollard (1989)
for the definition and the properties of Euclidean families of functions.

The first of the three lemmas is a refined version of a standard result for
cross-validation in nonparametric regression (e.g., H\"{a}rdle and Marron
(1985)). The result holds uniformly in $\theta $,$\alpha $ and for $h$ in $%
H_{n}=\left[ n^{-(1/2-\varepsilon )},\,n^{-\varepsilon }\right] ,$ with $%
0<\varepsilon <1/2.$

\begin{lemma}
\label{mise} Suppose that Assumptions \ref{asiid} to \ref{as46} hold. Fix
some small $c>0$ and let $\Lambda =\bigcup_{\theta \in \Theta
}\{t:f(t;\theta )\geq c\}$. Consider a family of functions $(y,t)\rightarrow
w_{\theta ,\alpha }(y,t)$, $\theta \in \Theta ,$ $\alpha \in N$\ for which
there exist a real-valued function $B(\cdot )$ with $E[B(Y)^{4+\varepsilon
}]<\infty $, for some $\varepsilon >0$, and $b^{\prime }\in (0,1]$ such
that, for each $y$
\begin{equation*}
|w_{\theta ,\alpha }(y,t)-w_{\theta ^{\prime },\alpha ^{\prime
}}(y,t^{\prime })|\leq B(y)\left\| (\theta ^{T},\alpha ,t)^{T}-(\theta
^{\prime T},\alpha ^{\prime },t^{\prime })^{T}\right\| ^{b^{\prime }}
\end{equation*}
for any $\theta ,\theta ^{\prime }\in \Theta $, $t,t^{\prime }\in \Lambda$,
and $\alpha ,\alpha ^{\prime }\in N$. Moreover, there exist $\theta ,\alpha $
and $\widetilde{B}(\cdot )$ such that $sup_{t\in \Lambda }|w_{\theta ,\alpha
}(\cdot ,t)|\leq \widetilde{B}(\cdot )$ and $E[\widetilde{B}%
(Y)^{4+\varepsilon }]<\infty $.

For $(\theta ,h)\!\in \!\Theta \!\times \!H_{n}$ and $\alpha \in N,$ define
\begin{equation*}
U\!\left( \theta ,h;\alpha \right) =\!\dfrac{1}{n}\sum\limits_{i=1}^{n}w_{%
\theta ,\alpha }\!\left( Y_{i},\!Z_{i}^{T}\theta \right) \!\left[ \hat{r}%
_{h}^{i}\!\left( Z_{i}^{T}\theta ;\theta \right) \!-\!r\!\left(
Z_{i}^{T}\theta ;\theta \right) \right] ^{2}\!I_{\left\{ z:\,f\left(
z^{T}\theta ;\theta \right) \geq c\right\} }\left( Z_{i}\right) \quad ;
\end{equation*}%
the kernel is a continuous probability density function $K$ with the support
in $[-1,1].$ Moreover, $K$ is of bounded variation and symmetric. Then,
\begin{equation*}
U\left( \theta ,h;\alpha \right) =-h^{4}C_{1}\left( \theta ;\alpha \right) -%
\frac{1}{nh}C_{2}\left( \theta ;\alpha \right) +\rho \left( \theta ,h;\alpha
\right) ,
\end{equation*}%
where
\begin{eqnarray}
C_{1}(\theta ;\alpha )\!\!\!\! &=&\!\!\! \dfrac{K_{1}^{2}}{4}\!E\left\{
\!\!-w_{\theta ,\alpha }\! \left( Y,Z^{T}\theta \right) \!\!\left[ r^{\prime
\prime }\!\left( Z^{T}\theta ;\theta \right) \!+\frac{2r^{\prime }\!\left(
Z^{T}\theta ;\theta \right) \!f^{\prime }\! \left( Z^{T}\theta ;\theta
\right) }{f\left( Z^{T}\theta ;\theta \right) }\right] ^{2}\!\!\!I_{\left\{
z:f\left( z^{T}\theta ;\theta \right) \geq c\right\} }\!\left( Z\right) \!
\! \right\} \!,  \notag \\
C_{2}\left( \theta ;\alpha \right) \!\!\!\! &=&\!\!\! K_{2}\ E\left\{ -\,\,%
\frac{w_{\theta ,\alpha }\left( Y,Z^{T}\theta \right) }{f\left( Z^{T}\theta
;\theta \right) }\ v\left( Z^{T}\theta ;\theta \right) \,I_{\left\{
z:\,f\left( z\theta ;\theta \right) \geq c\right\} }\left( Z\right) \right\}
\!,  \notag
\end{eqnarray}%
with $K_{1}=\int u^{2}K\left( u\right) du$, $K_{2}=\int K^{2}\left( u\right)
du$ and
\begin{equation*}
\sup_{\theta \in \Theta ,\,h\in H_{n},\alpha \in N}\rho \left( \theta
,h;\alpha \right) =o_{P}\left( h^{4}+\left( nh\right) ^{-1}\right) .
\end{equation*}
\end{lemma}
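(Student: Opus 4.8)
The plan is to adapt the classical analysis of least-squares cross-validation for the Nadaraya--Watson estimator (H\"{a}rdle and Marron (1985)) to the present weighted functional $U(\theta,h;\alpha)$, and to upgrade every estimate so that it holds \emph{uniformly} in $(\theta,\alpha,h)\in\Theta\times N\times H_n$. The mechanism for the uniformity is to recast each remainder term as a $U$-process whose kernels, indexed by $(\theta,\alpha,h)$, form a Euclidean (VC-type) class with an integrable envelope, and then to apply the sharp uniform rates of Sherman (1994a) for (possibly degenerate) $U$-processes together with the permanence properties of Euclidean families in Pakes and Pollard (1989). Throughout, the Lipschitz and \emph{Condition L} hypotheses of Assumptions \ref{as45}--\ref{as46}, the bounded variation of $K$ and $K'$ in Assumption \ref{as410}, and the moment bounds in Assumption \ref{as42} are exactly what make the classes Euclidean and their envelopes $(4+\varepsilon)$-integrable.

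First I would dispose of the random denominator. Write
\begin{equation*}
\widehat{r}_h^i(Z_i^T\theta;\theta)-r(Z_i^T\theta;\theta)=\frac{N_i(\theta,h)}{\widehat{f}_h^i(Z_i^T\theta;\theta)},\qquad N_i(\theta,h)=\frac{1}{n-1}\sum_{j\ne i}\bigl[Y_j-r(Z_i^T\theta;\theta)\bigr]K_h(Z_i^T\theta-Z_j^T\theta).
\end{equation*}
On the trimming set $\{f(z^T\theta;\theta)\ge c\}$ the density is bounded below by $c$, and Lemma \ref{lemrate} gives $\max_i\sup_{\theta,z,h\in H_n}|\widehat{f}_h^i(z^T\theta;\theta)-f(z^T\theta;\theta)|=O_P(h^{-1}n^{-1/2})+O(h^a)=o_P(1)$ on $H_n$, so $\widehat{f}_h^i$ is also bounded away from zero with probability tending to one. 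Using the identity $\widehat{f}_h^i{}^{-2}-f^{-2}=(f-\widehat{f}_h^i)(f+\widehat{f}_h^i)\widehat{f}_h^i{}^{-2}f^{-2}$ and the a priori uniform bound $N_i(\theta,h)=O_P(h^2)+O_P((nh)^{-1/2})$ (bias plus centered part, see below), the error incurred by replacing $\widehat{f}_h^i$ with $f(Z_i^T\theta;\theta)$ in $U(\theta,h;\alpha)$ is, uniformly, of order $(h^4+(nh)^{-1})(h^{-1}n^{-1/2}+h^a)=o_P(h^4+(nh)^{-1})$, because $h^{-1}n^{-1/2}\le n^{-\varepsilon}\to 0$ and $h^a\le n^{-a\varepsilon}\to 0$ for $h\in H_n$.

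Next I would split the numerator as $N_i(\theta,h)=b_i(\theta,h)+S_i(\theta,h)$, where $b_i=E[N_i\mid Z_i]$ is the conditional-bias part and $S_i=N_i-b_i$ is centered given $Z_i$. A change of variables $s=Z_i^T\theta-hu$ and a second-order Taylor expansion, uniform by the \emph{Condition L} hypotheses (with $b=1$) on $\gamma''$, $f''$ and the conditional moment functions in Assumption \ref{as45}, give
\begin{equation*}
b_i(\theta,h)=\frac{h^2K_1}{2}\Bigl[r''(Z_i^T\theta;\theta)f(Z_i^T\theta;\theta)+2r'(Z_i^T\theta;\theta)f'(Z_i^T\theta;\theta)\Bigr]+o(h^2)
\end{equation*}
uniformly in $i,\theta$. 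Expanding $N_i^2=b_i^2+2b_iS_i+S_i^2$ and dividing by $f(Z_i^T\theta;\theta)^2$, I would identify three contributions, each handled uniformly in $(\theta,\alpha,h)$. (i) The $b_i^2$ part: a uniform law of large numbers over the Euclidean class indexed by $(\theta,\alpha)$, with envelope controlled by $B(\cdot)$ and $E[B(Y)^{4+\varepsilon}]<\infty$, turns $\frac{1}{n}\sum_i w_{\theta,\alpha}(Y_i,Z_i^T\theta)b_i^2f_i^{-2}I_{\{f\ge c\}}(Z_i)$ into $h^4\tfrac{K_1^2}{4}E\{w_{\theta,\alpha}(Y,Z^T\theta)[r''+2r'f'/f]^2I_{\{f\ge c\}}(Z)\}+o_P(h^4)$, which is $-h^4C_1(\theta;\alpha)+o_P(h^4)$. (ii) The $S_i^2$ part: writing $S_i=\frac{1}{n-1}\sum_{j\ne i}\eta_{ij}$ with $\eta_{ij}$ centered given $Z_i$, the diagonal piece $\frac{1}{n}\sum_i w_{\theta,\alpha}f_i^{-2}\frac{1}{(n-1)^2}\sum_{j\ne i}\eta_{ij}^2I_{\{f\ge c\}}(Z_i)$ has expectation $\frac{1}{(n-1)h}K_2E\{w_{\theta,\alpha}(Y,Z^T\theta)v(Z^T\theta;\theta)f(Z^T\theta;\theta)^{-1}I_{\{f\ge c\}}(Z)\}+o((nh)^{-1})$ by a kernel-variance computation and $\tfrac{1}{n-1}\sim\tfrac1n$, i.e. $-\tfrac{1}{nh}C_2(\theta;\alpha)+o((nh)^{-1})$, and it concentrates around its mean; the off-diagonal piece $\sum_{j\ne k}\eta_{ij}\eta_{ik}$ is a third-order $U$-statistic that is degenerate once one conditions on the $i$-argument ($E[\eta_{ij}\eta_{ik}\mid Z_i]=0$), hence $o_P((nh)^{-1})$ uniformly. (iii) The cross term $\frac{1}{n}\sum_i w_{\theta,\alpha}f_i^{-2}b_iS_iI_{\{f\ge c\}}(Z_i)$ has mean zero and is $o_P(h^4+(nh)^{-1})$ uniformly by the same $U$-process estimate.

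The hard part will be making all of step (ii)--(iii) hold uniformly over the whole range $h\in H_n=[n^{-(1/2-\varepsilon)},n^{-\varepsilon}]$: the two target terms $h^4$ and $(nh)^{-1}$ change relative order with $h$, so a single crude maximal inequality is not enough, and one must carry the exact powers of $h$ through each of the $U$-statistic kernels. The resolution, as in Delecroix, Hristache and Patilea (2006), is to check that the kernels of the off-diagonal $U$-statistic, of the cross term, and of the denominator-replacement error, viewed as families indexed by $(\theta,\alpha,h)$, are Euclidean with $(4+\varepsilon)$-integrable envelopes -- a consequence of Assumption \ref{as410} (so that $\{u\mapsto K((t-u)/h)\}$ and $\{u\mapsto K'((t-u)/h)\}$ are VC classes), the regularity in Assumptions \ref{as45}, \ref{as46}, \ref{as49}, the moments in Assumption \ref{as42}, and the stability of Euclidean families under products, sums and Lipschitz reparametrization from Pakes and Pollard (1989) -- and then to invoke Sherman's (1994a) sharp uniform rates for degenerate $U$-processes to obtain bounds of the precise form $o_P(h^4+(nh)^{-1})$. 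This bookkeeping is what makes the argument long; since it is carried out in full in Delecroix, Hristache and Patilea (2006), the details are omitted here.
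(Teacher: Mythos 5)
Your proposal is correct and follows essentially the route the paper itself indicates: the paper omits the proof of Lemma \ref{mise}, deferring to Delecroix, Hristache and Patilea (2006), and names as the key ingredients exactly what you use -- a H\"{a}rdle--Marron type bias/variance decomposition of the cross-validation functional, made uniform in $(\theta,\alpha,h)$ via Euclidean families (Pakes and Pollard (1989)) and Sherman's (1994a) uniform rates for degenerate $U$-processes. Your computations of the leading constants (the $h^4K_1^2/4$ bias-squared term and the $K_2/(nh)$ variance term) match $C_1$ and $C_2$ as defined in the lemma, so no further comparison is needed.
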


\quad

\begin{lemma}
\label{mise2} Assume that the conditions of Lemma \ref{mise} hold. Let
\begin{equation*}
\widetilde{T}(\theta ,h;\alpha )=\dfrac{1}{n}\sum\limits_{i=1}^{n}\ \pi
\left( Y_{i},Z_{i}^{T} ;\alpha \right) \left[ \hat{r}_{h}^{i}\left(
Z_{i}^{T}\theta ;\theta \right) -r\left( Z_{i}^{T}\theta ;\theta \right) %
\right] \ I_{\left\{ z:\,f\left( z^{T}\theta _{0};\theta _{0}\right) \geq
c\right\} }\left( Z_{i}\right) ,
\end{equation*}%
where, for some $\varepsilon >0$, $E\left[ \pi \left( Y,Z;\alpha \right)
\mid Z\right] =0$ and $E\left[ \left| \pi \left( Y,Z;\alpha\right) \right|
^{4+\varepsilon }I_{\left\{ z:\,f\left( z^{T}\theta _{0};\theta _{0}\right)
\geq c\right\} }\left( Z\right) \right] <\infty $, for any $\alpha$. Then,
\begin{equation*}
\widetilde{T}(\theta ,h;\alpha )=o_{P}\left( h^{4}+n^{-1}h^{-1}\right) ,
\end{equation*}%
uniformly in $h\in \mathcal{H}_{n}$, in $\theta \in \Theta _{n}$ and in $%
\alpha \in N.$
\end{lemma}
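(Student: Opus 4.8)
The plan is to rewrite $\widetilde{T}(\theta,h;\alpha)$ as a second order $U$-process indexed by $(\theta,h,\alpha)$, to carry out its Hoeffding decomposition, and to exploit the fact that the conditional-mean-zero condition $E[\pi(Y,Z;\alpha)\mid Z]=0$ annihilates the dominant (``$j$-side'') linear projection; what remains is of order $n^{-1/2}h^{2}$ (the surviving linear term, driven by the kernel bias) plus $n^{-1}h^{-1/2}$ (the canonical part), both up to a fixed power $(\log n)^{\kappa}$ coming from the uniformity in $(\theta,h,\alpha)$. Since on $\mathcal{H}_{n}$ one has $h^{4}+n^{-1}h^{-1}\geq 2n^{-1/2}h^{3/2}$ by the arithmetic--geometric inequality, and since $\sup_{h\in\mathcal{H}_{n}}h^{1/2}(\log n)^{\kappa}\to 0$ for every fixed $\kappa$, both $n^{-1/2}h^{2}(\log n)^{\kappa}/(h^{4}+n^{-1}h^{-1})\le \tfrac12 h^{1/2}(\log n)^{\kappa}$ and $n^{-1}h^{-1/2}(\log n)^{\kappa}/(h^{4}+n^{-1}h^{-1})\le h^{1/2}(\log n)^{\kappa}$ tend to zero, which is exactly the asserted negligibility.

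First I would reduce to the version $\widetilde{T}_{0}$ in which the leave-one-out estimator $\widehat{f}_{h}^{i}(Z_{i}^{T}\theta;\theta)$ in the denominator of $\widehat{r}_{h}^{i}$ is replaced by the true $f(Z_{i}^{T}\theta;\theta)$. Writing $\widehat{r}_{h}^{i}(Z_{i}^{T}\theta;\theta)-r(Z_{i}^{T}\theta;\theta)=N_{h}^{i}(\theta)/\widehat{f}_{h}^{i}(Z_{i}^{T}\theta;\theta)$ with $N_{h}^{i}(\theta)=(n-1)^{-1}\sum_{j\neq i}(Y_{j}-r(Z_{i}^{T}\theta;\theta))K_{h}(Z_{i}^{T}\theta-Z_{j}^{T}\theta)$, and using that on the trimming region both $f$ and $\widehat{f}_{h}^{i}$ stay bounded away from zero with probability tending to one (by the trimming reduction of Section \ref{metoda}, Lemma \ref{idic} and Lemma \ref{lemrate}), the expansion $1/\widehat{f}_{h}^{i}=1/f-(\widehat{f}_{h}^{i}-f)/f^{2}+(\widehat{f}_{h}^{i}-f)^{2}/(f^{2}\widehat{f}_{h}^{i})$ splits $\widetilde{T}$ into $\widetilde{T}_{0}$ plus two corrections carrying the extra factor $\widehat{f}_{h}^{i}-f$, which is pointwise of order $h^{2}+(nh)^{-1/2}$. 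Each correction is again a weighted sum over $i$ of $\pi(Y_{i},Z_{i};\alpha)$ times a statistic \emph{not involving} $Y_{i}$, hence a $U$-process of higher degree but strictly lower order; a variance computation (or a further Hoeffding decomposition, which is what rigor requires since the off-diagonal covariances matter) bounds it by $n^{-1/2}(h^{4}+n^{-1}h^{-1})+n^{-2}h^{-3/2}=o_{P}(h^{4}+n^{-1}h^{-1})$ uniformly over $\Theta_{n}\times\mathcal{H}_{n}\times N$. A crude sup-norm substitution is \emph{not} enough here, as it loses a logarithmic factor.

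For $\widetilde{T}_{0}=\{n(n-1)\}^{-1}\sum_{i\neq j}q_{\theta,h,\alpha}(\zeta_{i},\zeta_{j})$, with $\zeta_{i}=(Y_{i},Z_{i})$ and $q_{\theta,h,\alpha}(\zeta_{i},\zeta_{j})=f(Z_{i}^{T}\theta;\theta)^{-1}\pi(Y_{i},Z_{i};\alpha)(Y_{j}-r(Z_{i}^{T}\theta;\theta))K_{h}(Z_{i}^{T}\theta-Z_{j}^{T}\theta)I_{A}(Z_{i})$, I would compute the Hoeffding projections. Because $E[\pi(Y_{j},Z_{j};\alpha)\,\phi(Z_{j})]=0$ for every measurable $\phi$, both $E[\widetilde{T}_{0}]=0$ and $E[q_{\theta,h,\alpha}(\zeta_{j},\zeta_{i})\mid\zeta_{i}]\equiv 0$, so the only surviving linear term is $h_{1}(\zeta_{i})=f(Z_{i}^{T}\theta;\theta)^{-1}\pi(Y_{i},Z_{i};\alpha)\,b_{h}(Z_{i}^{T}\theta;\theta)I_{A}(Z_{i})$ with $b_{h}(t;\theta)=(\gamma(\cdot;\theta)\ast K_{h})(t)-r(t;\theta)(f(\cdot;\theta)\ast K_{h})(t)$. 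By the twice-differentiability in Assumption \ref{as45} and the identity $\gamma=rf$ one gets $\sup_{t\in\Lambda,\theta\in\Theta}|b_{h}(t;\theta)|=O(h^{2})$, hence $E[h_{1}(\zeta_{i})^{2}]=O(h^{4})$ (using $E[|\pi|^{4+\varepsilon}I_{A}]<\infty$) and $n^{-1}\sum_{i}h_{1}(\zeta_{i})=O_{P}(n^{-1/2}h^{2})$ uniformly over the class by a standard maximal inequality for Euclidean families ($\theta$ enters through inner products, $h$ through a dilation, $\alpha$ smoothly; Pakes and Pollard (1989)). For the canonical part $\{n(n-1)\}^{-1}\sum_{i\neq j}h_{2}(\zeta_{i},\zeta_{j})$, a direct computation using $\int K^{2}=K_{2}$ and Assumption \ref{as46} gives $\sup_{\theta,h,\alpha}E[h_{2}(\zeta_{i},\zeta_{j})^{2}]=O(h^{-1})$, and then Sherman's (1994a) maximal inequality for degenerate $U$-processes, applied separately on dyadic blocks in $h$ so that the \emph{weak variance} $O(h^{-1})$, rather than the sup-norm envelope, drives the estimate, yields $\sup_{\theta\in\Theta_{n},\,h\in\mathcal{H}_{n},\,\alpha\in N}|\{n(n-1)\}^{-1}\sum_{i\neq j}h_{2}|=O_{P}(n^{-1}h^{-1/2}(\log n)^{\kappa})$.

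The step I expect to be the main obstacle is this last one. The envelope of $h_{2}$ is of order $h^{-1}$, so a naive maximal inequality gives only order $n^{-1}h^{-1}$, which merely \emph{matches} the target $h^{4}+n^{-1}h^{-1}$ and does not beat it; one has to use the sharper, variance-sensitive inequalities (in the spirit of Sherman (1994a)) that exploit the fact that, although $K_{h}$ is spiky, its second moment blows up only like $h^{-1}$. This forces one to check carefully that the classes $\{q_{\theta,h,\alpha}\}$, $\{h_{1}(\cdot;\theta,h,\alpha)\}$ and $\{h_{2}(\cdot,\cdot;\theta,h,\alpha)\}$ are Euclidean with envelopes of the claimed $L^{2}$ and $L^{4+\varepsilon}$ sizes uniformly over $\theta\in\Theta_{n}$, $h\in\mathcal{H}_{n}$, $\alpha\in N$ --- which is where the smoothness of $(\theta,t)\mapsto r(t;\theta),f(t;\theta)$, the Lipschitz condition on $v$, and the $4+\varepsilon$ moment bounds on $Y$ and on $\pi$ all enter --- and to organize the dyadic-in-$h$ chaining so that the accumulated logarithmic factors remain harmless, as recorded in the first paragraph.
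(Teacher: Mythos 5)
Your sketch is sound and follows exactly the route the paper itself indicates: the proof of Lemma \ref{mise2} is omitted in the paper with a reference to Delecroix, Hristache and Patilea (2006), and the key ingredients named there --- uniform rates for $U$-processes indexed by Euclidean families in the sense of Sherman (1994a) --- are precisely the Hoeffding decomposition, the annihilation of the $j$-side projection by $E\left[ \pi \left( Y,Z;\alpha \right) \mid Z\right] =0$, and the variance-sensitive degenerate $U$-process bounds that you lay out. Your identification of the two genuinely delicate points (the linearization of the random denominator cannot be closed by a crude sup-norm substitution, and the canonical part must be controlled through its weak variance $O(h^{-1})$ rather than its envelope) matches where the real work lies in the cited reference.
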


\quad

The following lemma will provide the order of the reminder term in the
decomposition (\ref{deco}).\ The proof relies on orthogonality conditions
like (\ref{orth1}) and (\ref{orth2}).\

\begin{lemma}
\label{rr} Let $A=\left\{ z:f\left( z^{T}\theta _{0};\theta _{0}\right) \geq
c\right\} \subset \mathbb{R}^{d},$ for some $c>0.$ Let $\phi \left(
y,r;\alpha \right) =B\left( r;\alpha \right) +C\left( r;\alpha \right) y$
such that such that $B^{\prime }\left( r;\alpha \right) +C^{\prime }\left(
r;\alpha \right) r\equiv 0.$ Suppose that the Assumptions \ref{asiid}, \ref%
{as41}, \ref{as42} and \ref{as45} to \ref{as49} hold. Then
\begin{eqnarray*}
R\left( \theta ,h; \alpha \right) &=&\dfrac{1}{n}\sum\limits_{i=1}^{n}\left[
\phi \left( Y_{i},\hat{r}_{h}^{i}\left( Z_{i}^{T}\theta ;\theta \right)
;\alpha \right) -\phi \left( Y_{i},r\left( Z_{i}^{T}\theta ;\theta \right)
;\alpha \right) \right] \ I_{A}\left( Z_{i}\right) \\
&&-\dfrac{1}{n}\sum\limits_{i=1}^{n}\left[ \phi \left( Y_{i},\hat{r}%
_{h}^{i}\left( Z_{i}^{T}\theta _{0};\theta _{0}\right) ;\alpha \right) -\phi
\left( Y_{i},r\left( Z_{i}^{T}\theta _{0};\theta _{0}\right) ;\alpha \right) %
\right] \ I_{A}\left( Z_{i}\right) \\
&& \\
&=&\left[ O_{P}\left( h^{4}\right) +O_{P}\left( \frac{1}{nh^{2}}\right)
+O_{P}\left( \frac{h^{2}}{\sqrt{n}}\right) +O_{P}\left( \frac{1}{n\sqrt{n}%
h^{4}}\right) \right] \times O_{P}\left( \left\| \theta -\theta _{0}\right\|
\right) \\
&&+\left[ O\left( h^{2}\right) +O_{P}\left( \frac{1}{\sqrt{n}h^{2}}\right) %
\right] \times O_{P}\left( \left\| \theta -\theta _{0}\right\| ^{2}\right) \\
&&
\end{eqnarray*}
when $n\rightarrow \infty $, uniformly in $\alpha ,$ uniformly in $h\in %
\left[ n^{-(1/2-\varepsilon )},\,n^{-\varepsilon }\right] ,$ with $%
0<\varepsilon <1/2,$ and uniformly in $\theta \in \Theta _{n}.$
\end{lemma}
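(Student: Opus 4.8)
The plan is to turn $R(\theta,h;\alpha)$ into a sum of terms that are explicitly proportional to $\theta-\theta_0$ or to $\|\theta-\theta_0\|^2$, and then to bound the accompanying coefficients uniformly over $\Theta_n\times H_n$ and over $\alpha\in N$. For fixed $i$ set $\omega_i(\theta)=\phi(Y_i,\hat r_h^i(Z_i^T\theta;\theta);\alpha)-\phi(Y_i,r(Z_i^T\theta;\theta);\alpha)$, so $R(\theta,h;\alpha)=\frac1n\sum_i[\omega_i(\theta)-\omega_i(\theta_0)]I_A(Z_i)$. A second-order Taylor expansion of $\omega_i$ in $\theta$ around $\theta_0$ gives
$$R(\theta,h;\alpha)=\Bigl[\tfrac1n\sum_i\partial_\theta\omega_i(\theta_0)I_A(Z_i)\Bigr]^T(\theta-\theta_0)+\tfrac12(\theta-\theta_0)^T\Bigl[\tfrac1n\sum_i\partial^2_{\theta\theta}\omega_i(\bar\theta_i)I_A(Z_i)\Bigr](\theta-\theta_0),$$
with $\bar\theta_i$ between $\theta$ and $\theta_0$, hence in $\Theta_n$. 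It then remains to prove that the first bracket is $O_P(h^4)+O_P(1/(nh^2))+O_P(h^2/\sqrt n)+O_P(1/(n\sqrt n h^4))$ and the second is $O(h^2)+O_P(1/(\sqrt n h^2))$, both uniformly in $(\theta,h)\in\Theta_n\times H_n$ and in $\alpha$.

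For the first bracket, chain-rule differentiation gives $\partial_\theta\omega_i(\theta_0)=\partial_2\phi(Y_i,\hat r_h^i;\alpha)\,\partial_\theta\hat r_h^i-\partial_2\phi(Y_i,r_i;\alpha)\,\partial_\theta r_i$ (all evaluated at $(Z_i^T\theta_0,\theta_0)$, with $r_i=r(Z_i^T\theta_0;\theta_0)$). I would Taylor-expand $\partial_2\phi(Y_i,\hat r_h^i;\alpha)$ around $r_i$ (legitimate by the twice-differentiability and Lipschitz-second-derivative part of Assumption \ref{as49}) and split $\partial_\theta\hat r_h^i=\partial_\theta r_i+(\partial_\theta\hat r_h^i-\partial_\theta r_i)$, producing four groups: (i) $\frac1n\sum_i\partial_2\phi(Y_i,r_i;\alpha)(\partial_\theta\hat r_h^i-\partial_\theta r_i)I_A(Z_i)$; (ii) $\frac1n\sum_i\partial^2_{22}\phi(Y_i,r_i;\alpha)(\hat r_h^i-r_i)\partial_\theta r_i\,I_A(Z_i)$; (iii) $\frac1n\sum_i\partial^2_{22}\phi(Y_i,r_i;\alpha)(\hat r_h^i-r_i)(\partial_\theta\hat r_h^i-\partial_\theta r_i)I_A(Z_i)$; and (iv) a quadratic remainder of the form $\frac1n\sum_iO(1)\,(\hat r_h^i-r_i)^2\,\partial_\theta\hat r_h^i\,I_A(Z_i)$. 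The decisive point is that the identity $\partial_rB+\partial_rC\,r\equiv0$ yields $\partial_2\phi(Y_i,r_i;\alpha)=\partial_rC(r_i;\alpha)(Y_i-r_i)$, which is centered given $Z_i$ (this is (\ref{orth1})), while $E[\partial^2_{22}\phi(Y_i,r_i;\alpha)\partial_\theta r_i\mid Z_i^T\theta_0]=0$ (this is (\ref{orth2}), already checked in the text, since $E[\partial^2_{22}\phi(Y_i,r_i;\alpha)\mid Z_i]=-\partial_rC(r_i;\alpha)$ is a function of $Z_i^T\theta_0$ only and $E[\partial_\theta r_i\mid Z_i^T\theta_0]=0$). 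Pairing the deterministic $O(h^2)$ bias part of $\partial_\theta\hat r_h^i-\partial_\theta r_i$ with the conditionally centered $\partial_2\phi(Y_i,r_i;\alpha)$ reduces group (i)'s bias contribution to an i.i.d.\ mean-zero average of order $O_P(h^2/\sqrt n)$; pairing the $h^2\beta(Z_i^T\theta_0;\theta_0)$-type bias part of $\hat r_h^i-r_i$ (and the factor $I_A(Z_i)$, itself a function of $Z_i^T\theta_0$) with $\partial^2_{22}\phi(Y_i,r_i;\alpha)\partial_\theta r_i$ does the same for group (ii). All remaining stochastic parts of (i)--(iv), and the higher-order remainders, are leave-one-out $U$-processes and weighted cross-validation-type averages indexed by the Euclidean families $\{(\theta,\alpha)\}$ and $h\in H_n$; they are controlled by Sherman's (1994a) uniform $U$-process bounds together with the standard rates $\hat r_h^i-r_i=O(h^2)+O_P((nh)^{-1/2})$ and $\partial_\theta\hat r_h^i-\partial_\theta r_i=O(h^2)+O_P((nh^3)^{-1/2})$. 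Lemma \ref{mise2} is the prototype for the stochastic part of group (ii), and collecting the resulting rates gives precisely the four orders claimed for the first bracket (the $1/(nh^2)$ and $1/(n\sqrt n h^4)$ coming from variance$\times$variance terms in (iii) and from the degenerate parts of (iv)).

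For the second bracket I would differentiate $\omega_i$ twice in $\theta$, again expand $\partial_2\phi$ and $\partial^2_{22}\phi$ in the $r$-argument, and note that every summand is a product of a bounded pseudo-likelihood derivative, a bounded derivative of the regression (Assumption \ref{as45}), and one of the three differences $\hat r_h^i-r$, $\partial_\theta\hat r_h^i-\partial_\theta r$, $\partial^2_{\theta\theta}\hat r_h^i-\partial^2_{\theta\theta}r$. Since $\Theta_n$ shrinks to $\theta_0$, each of these differences is $O(h^2)+O_P((\sqrt n h^2)^{-1})$ uniformly over the relevant neighbourhood and over $h\in H_n$, which gives the announced bound for the second bracket; uniformity in $\alpha$ follows from the Lipschitz-in-$\alpha$ clause of Assumption \ref{as49}.

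The main obstacle is the \emph{simultaneous} uniformity in $\theta\in\Theta_n$, $h\in H_n$ and $\alpha\in N$ of the various degenerate $U$-processes in groups (i)--(iv) and of the $\theta$- and $\alpha$-derivatives of the leave-one-out Nadaraya--Watson estimator — this is exactly where the Euclidean (VC-type) class machinery and Sherman's maximal inequalities for $U$-processes are indispensable, and where the long bookkeeping of Delecroix, Hristache and Patilea (2006) enters; once that is in place, the rest is routine Taylor expansion combined with the two orthogonality identities.
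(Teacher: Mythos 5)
The paper itself does not prove Lemma \ref{rr}: it defers to Delecroix, Hristache and Patilea (2006) and only names the key ingredients, namely the orthogonality conditions (\ref{orth1})--(\ref{orth2}) and Sherman's (1994a) maximal inequalities for $U$-processes indexed by Euclidean classes. Your sketch is built on exactly those ingredients, and your treatment of the linear-in-$(\theta-\theta_0)$ coefficient is the intended one: expand $\phi$ in its second argument, use $\partial_2\phi(Y,r;\alpha)=\partial_rC(r;\alpha)(Y-r)$ and $E[\partial_{22}^2\phi\mid Z]=-\partial_rC$ together with $E[\partial_\theta r\mid Z^T\theta_0]=0$ to pair every deterministic bias factor with a conditionally centered weight, and control the remaining degenerate pieces by $U$-process bounds (Lemma \ref{mise2} being the prototype).

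There is, however, a concrete gap in your treatment of the quadratic coefficient. You bound $\frac1n\sum_i\partial^2_{\theta\theta}\omega_i(\bar\theta_i)I_A(Z_i)$ termwise, claiming that each of $\hat r_h^i-r$, $\partial_\theta\hat r_h^i-\partial_\theta r$ and $\partial^2_{\theta\theta}\hat r_h^i-\partial^2_{\theta\theta}r$ is $O(h^2)+O_P((\sqrt n h^2)^{-1})$ uniformly. For the second $\theta$-derivative of a Nadaraya--Watson-type estimator this is false: differentiating $K_h(z^T\theta-Z_j^T\theta)$ twice in $\theta$ produces a factor $h^{-3}K''(\cdot/h)(z-Z_j)(z-Z_j)^{T}$ whose components orthogonal to $\theta$ are not localized, so the stochastic part is of order $(nh^5)^{-1/2}=n^{-1/2}h^{-5/2}$ (up to logs), not $n^{-1/2}h^{-2}$. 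At $h\asymp n^{-1/5}$ this is $O_P(1)$ rather than $o_P(1)$, so a termwise uniform bound can only deliver $O_P(1)\times\|\theta-\theta_0\|^2$, which is not the claimed $\left[O(h^2)+O_P((\sqrt nh^2)^{-1})\right]\times O_P(\|\theta-\theta_0\|^2)$ and would not make the remainder negligible in the proof of Theorem \ref{param1}. The term carrying $\partial^2_{\theta\theta}\hat r_h^i$ is multiplied by the conditionally centered factor $\partial_rC(r_i;\alpha)(Y_i-r_i)$, and it is only after exploiting this centering (via the Hoeffding/projection decomposition of the corresponding $U$-statistic) that the extra $n^{-1/2}$ and the cancellation of the worst variance terms are obtained; moreover the mean-value points $\bar\theta_i$ must be removed (e.g.\ by expanding in $\hat r-r$ first and only then in $\theta$ around $\theta_0$, as in Delecroix, Hristache and Patilea (2006)), since a centering argument cannot be applied at a random intermediate $\bar\theta_i$. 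So the architecture of your proof is the right one, but the second bracket requires the same orthogonality-plus-$U$-statistic machinery you deploy for the first bracket, not a crude uniform rate.
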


\section{Appendix: proofs of the main results}

\label{proof}

\begin{proof}[Proof of Theorem \ref{param1}]
Assume for the moment that $\left( \widehat{\theta },\widehat{h}\right) $
are defined by maximization of $\widehat{S}\left( \theta ,h;\widetilde{%
\alpha }_{n}\right) $ which is defined with the fixed trimming $I_{A}(\cdot
) $ (see equation (\ref{infeasi})). At the end of the proof we show that the
same conclusions hold for $\left( \widehat{\theta },\widehat{h}\right) $
defined in (\ref{deeff}) with the data-driven trimming.

\emph{Part I : }$\sqrt{n}-$\emph{asymptotic normality of }$\widehat{\theta }%
. $\emph{\ }By the decomposition (\ref{deco}) we have
\begin{equation*}
\widehat{S}\left( \theta ,h;\widetilde{\alpha }_{n}\right) =\widetilde{S}%
\left( \theta ;\widetilde{\alpha }_{n}\right) +T\left( h;\alpha ^{\ast
}\right) +R\left( \theta ,h;\widetilde{\alpha }_{n}\right) .
\end{equation*}%
Our objective is to show that $R\left( \theta ,h;\widetilde{\alpha }%
_{n}\right) $ is negligible when compared to $\widehat{S}\left( \theta ;%
\widetilde{\alpha }_{n}\right) $ from which we deduce that $\widehat{\theta }
$ behaves as the maximizer of $\widetilde{S}\left( \theta ;\widetilde{\alpha
}_{n}\right) $. Define
\begin{equation*}
R_{1}\left( \theta ,h;\alpha \right) =\dfrac{1}{n}\sum\limits_{i=1}^{n}\left[
\psi \left( Y_{i},\hat{r}_{h}^{i}\left( Z_{i}^{T}\theta ;\theta \right)
;\alpha \right) -\psi \left( Y_{i},r\left( Z_{i}^{T}\theta ;\theta \right)
;\alpha \right) \right] I_{A}\left( Z_{i}\right) \;
\end{equation*}%
and use Taylor expansion to write
\begin{eqnarray*}
R\left( \theta ,h;\widetilde{\alpha }_{n}\right) &=&R_{1}\left( \theta ,h;%
\widetilde{\alpha }_{n}\right) -R_{1}\left( \theta _{0},h;\alpha ^{\ast
}\right) \\
&=&\left[ R_{1}\left( \theta ,h;\widetilde{\alpha }_{n}\right) -R_{1}\left(
\theta _{0},h;\widetilde{\alpha }_{n}\right) \right] +\left[ R_{1}\left(
\theta _{0},h;\widetilde{\alpha }_{n}\right) -R_{1}\left( \theta
_{0},h;\alpha ^{\ast }\right) \right] .
\end{eqnarray*}%
Apply Lemma \ref{rr} to obtain the order of $R_{1}\left( \theta ,h;%
\widetilde{\alpha }_{n}\right) -R_{1}\left( \theta _{0},h;\widetilde{\alpha }%
_{n}\right) .$ Next, note that $R_{1}\left( \theta _{0},h;\widetilde{\alpha }%
_{n}\right) -R_{1}\left( \theta _{0},h;\alpha ^{\ast }\right) $ does not
depend on $\theta $. Deduce that
\begin{eqnarray*}
R\left( \theta ,h;\widetilde{\alpha }_{n}\right) \!\! &=&\!\!\left[
\!O_{P}\left( h^{4}\right) +O_{P}\left( \frac{1}{nh^{2}}\right) +O_{P}\left(
\frac{h^{2}}{\sqrt{n}}\right) +O_{P}\left( \frac{1}{n\sqrt{n}h^{4}}\right) \!%
\right] \times O_{P}\left( \left\| \theta -\theta _{0}\right\| \right) \\
&&+\left[ \!O\left( h^{2}\right) +O_{P}\left( \frac{1}{\sqrt{n}h^{2}}\right)
\!\right] \times O_{P}\left( \left\| \theta -\theta _{0}\right\| ^{2}\right)
\\
&&+\left\{ \text{terms not depending on }\theta \right\} ,
\end{eqnarray*}%
uniformly in $h\in \left[ n^{-(1/2-\varepsilon )},\,n^{-\varepsilon }\right]
,$ with $0<\varepsilon <1/2,$ uniformly in $\theta \in \Theta _{n}$ and
uniformly with respect to $\left\{ \widetilde{\alpha }_{n}\right\} .$ It
follows that, up to terms not containing $\theta ,$
\begin{equation*}
R\left( \theta ,h;\widetilde{\alpha }_{n}\right) =o_{P}\left( \left\| \theta
-\theta _{0}\right\| /\sqrt{n}\right) +o_{P}\left( \left\| \theta -\theta
_{0}\right\| ^{2}\right) ,
\end{equation*}%
uniformly in $h\in \mathcal{H}_{n}$, $\theta \in \Theta _{n}$ and uniformly
with respect to $\left\{ \widetilde{\alpha }_{n}\right\} .$

Now, write $\widetilde{S}\left( \theta ;\widetilde{\alpha }_{n}\right) =%
\widetilde{S}_{1}\left( \theta ;\widetilde{\alpha }_{n}\right) -\widetilde{S}%
_{1}\left( \theta _{0};\alpha ^{\ast }\right) $ with
\begin{equation*}
\widetilde{S}_{1}\left( \theta ,\alpha \right) =\dfrac{1}{n}%
\sum\limits_{i=1}^{n}\psi \left( Y_{i},r\left( Z_{i}^{T}\theta ;\theta
\right) ;\alpha \right) \ I_{A}\left( Z_{i}\right) .
\end{equation*}%
Notice that
\begin{equation*}
\widetilde{S}\left( \theta ,\widetilde{\alpha }_{n}\right) =\left[
\widetilde{S}_{1}\left( \theta ,\widetilde{\alpha }_{n}\right) -\widetilde{S}%
_{1}\left( \theta _{0},\widetilde{\alpha }_{n}\right) \right] +\left[
\widetilde{S}_{1}\left( \theta _{0},\widetilde{\alpha }_{n}\right) -%
\widetilde{S}_{1}\left( \theta _{0},\alpha ^{\ast }\right) \right] ,
\end{equation*}%
where the last difference does not contain $\theta $, so that
\begin{equation*}
\arg \max_{\theta }\widetilde{S}\left( \theta ,\widetilde{\alpha }%
_{n}\right) =\arg \max_{\theta }[\widetilde{S}_{1}\left( \theta ,\widetilde{%
\alpha }_{n}\right) -\widetilde{S}_{1}\left( \theta _{0},\widetilde{\alpha }%
_{n}\right) ].
\end{equation*}%
Furthermore, for any $\widetilde{\alpha }_{n}\rightarrow \alpha ^{\ast },$
in probability, we have $\partial _{\theta \theta ^{T}}^{2}\widetilde{S}%
_{1}\left( \theta _{0},\widetilde{\alpha }_{n}\right) -\partial _{\theta
\theta ^{T}}^{2}\widetilde{S}_{1}\left( \theta _{0},\alpha ^{\ast }\right)
=o_{P}\left( 1\right) .$ Therefore, using the Taylor expansion we can write
\begin{eqnarray*}
&&\!\!\!\!\!\!\!\!\!\!\!\!\widetilde{S}_{1}\left( \theta ,\widetilde{\alpha }%
_{n}\right) -\widetilde{S}_{1}\left( \theta _{0},\widetilde{\alpha }%
_{n}\right) \\
&=&\left( \theta -\theta _{0}\right) ^{T}\,\partial _{\theta }\widetilde{S}%
_{1}\left( \theta _{0},\widetilde{\alpha }_{n}\right) +\left( \theta -\theta
_{0}\right) ^{T}\,\partial _{\theta \theta ^{T}}^{2}\widetilde{S}_{1}\left(
\theta _{0},\widetilde{\alpha }_{n}\right) \,\left( \theta -\theta
_{0}\right) +o_{P}\left( \left\| \theta -\theta _{0}\right\| ^{2}\right) \\
&& \\
&=&\left( \theta -\theta _{0}\right) ^{T}\,\partial _{\theta }\widetilde{S}%
_{1}\left( \theta _{0},\alpha ^{\ast }\right) +\left( \theta -\theta
_{0}\right) ^{T}\,\left[ \partial _{\theta }\widetilde{S}_{1}\left( \theta
_{0},\widetilde{\alpha }_{n}\right) -\partial _{\theta }\widetilde{S}%
_{1}\left( \theta _{0},\alpha ^{\ast }\right) \right] \\
&&+\left( \theta -\theta _{0}\right) ^{T}\,\partial _{\theta \theta ^{T}}^{2}%
\widetilde{S}_{1}\left( \theta _{0},\alpha ^{\ast }\right) \,\left( \theta
-\theta _{0}\right) \\
&&+\left( \theta -\theta _{0}\right) ^{T}\,\left[ \partial _{\theta \theta
^{T}}^{2}\widetilde{S}_{1}\left( \theta _{0},\widetilde{\alpha }_{n}\right)
-\partial _{\theta \theta ^{T}}^{2}\widetilde{S}_{1}\left( \theta
_{0},\alpha ^{\ast }\right) \right] \,\left( \theta -\theta _{0}\right) \\
&&+o_{P}\left( \left\| \theta -\theta _{0}\right\| ^{2}\right) \\
&& \\
&=&\!\!\frac{1}{\sqrt{n}}\ \left( \theta -\theta _{0}\right)
^{T}\,V_{n}+\left( \theta -\theta _{0}\right) ^{T}\left[ \partial _{\theta }%
\widetilde{S}_{1}\left( \theta _{0},\widetilde{\alpha }_{n}\right) -\partial
_{\theta }\widetilde{S}_{1}\left( \theta _{0},\alpha ^{\ast }\right) \right]
\!-\left( \theta -\theta _{0}\right) ^{T}\,W_{n}\,\left( \theta -\theta
_{0}\right) \\
&&+o_{P}\left( \left\| \theta -\theta _{0}\right\| ^{2}\right) ,
\end{eqnarray*}%
uniformly in $\theta $ in $o_{P}\left( 1\right) $ neighborhoods of $\theta
_{0}$, where
\begin{eqnarray*}
V_{n} &=&\dfrac{1}{\sqrt{n}}\sum\limits_{i=1}^{n}\partial _{\theta }\psi
\left( Y_{i},r\left( Z_{i}^{T}\theta _{0};\theta _{0}\right) ;\alpha ^{\ast
}\right) I_{A}\left( Z_{i}\right) , \\
W_{n} &=&-\ \dfrac{1}{n}\sum\limits_{i=1}^{n}\partial _{\theta \theta
^{T}}^{2}\psi \left( Y_{i},r\left( Z_{i}^{T}\theta _{0};\theta _{0}\right)
;\alpha ^{\ast }\right) I_{A}\left( Z_{i}\right)
\end{eqnarray*}%
(here, $\partial _{\theta }\widetilde{S}_{1}$ and $\partial _{\theta }\psi $
are vectors in $\mathbb{R}^{d-1},$ while $\partial _{\theta \theta ^{T}}^{2}%
\widetilde{S}_{1}$ and $\partial _{\theta \theta ^{T}}^{2}\psi $ are $%
(d-1)\times (d-1)$ matrices). Next, write
\begin{equation*}
\partial _{\theta }\widetilde{S}_{1}\left( \theta _{0},\widetilde{\alpha }%
_{n}\right) -\partial _{\theta }\widetilde{S}_{1}\left( \theta _{0},\alpha
^{\ast }\right) =\partial _{\alpha }\partial _{\theta }\widetilde{S}%
_{1}\left( \theta _{0},\overline{\alpha }\right) \left[ \widetilde{\alpha }%
_{n}-\alpha ^{\ast }\right]
\end{equation*}%
($\partial _{\alpha }$ denote the partial derivative with respect to $\alpha
)$ with $\overline{\alpha }$ between $\widetilde{\alpha }_{n}$ and $\alpha
^{\ast }$, and notice that by the definition of $\psi $ as the logarithm of
a LEFN density, for any $\alpha ,$
\begin{equation*}
E\left[ \partial _{\alpha }\partial _{\theta }\widetilde{S}_{1}\left( \theta
_{0},\alpha \right) \right] =E\left[ \partial _{\theta }\partial _{\alpha
}\psi \left( Y,r\left( Z^{T}\theta _{0};\theta _{0}\right) ;\alpha \right)
I_{A}\left( Z\right) \right] =0.
\end{equation*}%
Consequently, $\partial _{\alpha }\partial _{\theta }\widetilde{S}_{1}\left(
\theta _{0},\alpha \right) =O_{P}\left( 1/\sqrt{n}\right) ,$ uniformly in $%
\alpha .$ Deduce that for all $\widetilde{\alpha }_{n}\rightarrow
\alpha ^{\ast },$ in probability, $\partial _{\theta
}\widetilde{S}_{1}\left( \theta _{0},\widetilde{\alpha }
_{n}\right) -\partial _{\theta }\widetilde{S}_{1}\left( \theta
_{0},\alpha ^{\ast }\right) =o_{P}\left( 1/\sqrt{n}\right) .$ Use
this fact and the order of $R\left( \theta ,h;\widetilde{\alpha }
_{n}\right) $ to write
\begin{eqnarray}
\widehat{S}\left( \theta ,h;\widetilde{\alpha }_{n}\right) &=&\dfrac{1}{%
\sqrt{n}}\left( \theta -\theta _{0}\right) ^{T}V_{n}-\frac{1}{2}\left(
\theta -\theta _{0}\right) ^{T}\ W_{n}\ \left( \theta -\theta _{0}\right) \
\label{quad1} \\
&&+o_{P}\left( \left\| \theta -\theta _{0}\right\| /\sqrt{n}\right)
+o_{P}\left( \left\| \theta -\theta _{0}\right\| ^{2}\right) +\left\{ \text{%
terms not depending on }\theta \right\} ,  \notag
\end{eqnarray}%
uniformly in $h\in \mathcal{H}_{n}$, $\theta \in \Theta _{n}$ and $%
\widetilde{\alpha }_{n}$ in $o_{P}\left( 1\right) $ neighborhoods of $\alpha
^{\ast }.$ By little algebra,
\begin{equation*}
I=E\left[ \partial _{\theta }\psi \left( Y,r_{\theta _{0}}\left( Z^{T}\theta
_{0}\right) ;\alpha ^{\ast }\right) \ \partial _{\theta }\psi \left(
Y,r_{\theta _{0}}\left( Z^{T}\theta _{0}\right) ;\alpha ^{\ast }\right)
^{T}I_{A}\left( Z\right) \right]
\end{equation*}%
\begin{equation*}
J=-E\left[ \partial _{\theta \theta }^{2}\psi \left( Y,r_{\theta _{0}}\left(
Z^{T}\theta _{0}\right) ;\alpha ^{\ast }\right) \ I_{A}\left( Z\right) %
\right] .
\end{equation*}%
Deduce from the assumptions that $V_{n}$ converges in distribution to $%
\mathcal{N}(0,I)$ and $W_{n}\rightarrow J,$ in probability. Finally, deduce
that $\widehat{\theta }$ has the same asymptotic distributions as the
maximizer of the quadratic form (\ref{quad1}). More precisely, apply
Theorems 1 and 2 of Sherman (1994a) to obtain first, the $\sqrt{n}-$%
consistency of $\widehat{\theta }$ and next, the asymptotic normality
\begin{equation*}
\sqrt{n}\left( \widehat{\theta }-\theta _{0}\right) \overset{\mathcal{D}}{%
\longrightarrow }\mathcal{N}\left( 0,J^{-1}IJ^{-1}\right) .
\end{equation*}

\emph{Part II : the behavior of }$\widehat{h}.$\emph{\ }By Taylor expansion
we can write
\begin{equation*}
T\left( h;\alpha ^{\ast }\right) =T_{0}+T_{1}(h;\alpha ^{\ast
})+T_{2}(h;\alpha ^{\ast })+\left\{ \text{negligible terms}\right\} ,
\end{equation*}
where $T_{0}$ is independent of $h$,
\begin{equation*}
T_{1}(h;\alpha ^{\ast })=\dfrac{1}{n}\sum\limits_{i=1}^{n}\ \partial
_{2}\psi \left( Y_{i},r\left( Z_{i}^{T}\theta _{0};\theta _{0}\right)
;\alpha ^{\ast }\right) \left[ \hat{r}_{h}^{i}\left( Z_{i}^{T}\theta
_{0};\theta _{0}\right) -r\left( Z_{i}^{T}\theta _{0};\theta _{0}\right) %
\right] \ I_{A}\left( Z_{i}\right) ,
\end{equation*}
\begin{equation*}
T_{2}(h;\alpha ^{\ast })=\dfrac{1}{n}\sum\limits_{i=1}^{n}\frac{1}{2}\
\partial _{22}^{2}\psi \left( Y_{i},r\left( Z_{i}^{T}\theta _{0};\theta
_{0}\right) ;\alpha ^{\ast }\right) \left[ \hat{r}_{h}^{i}\left(
Z_{i}^{T}\theta _{0};\theta _{0}\right) -r\left( Z_{i}^{T}\theta _{0};\theta
_{0}\right) \right] ^{2}\ I_{A}\left( Z_{i}\right) .
\end{equation*}
By Lemma \ref{mise},  $T_{2}(h;\alpha ^{\ast
})=-C_{1}h^{4}-C_{2}/nh+o_{P}(h^{4}+1/nh)$,
uniformly over $\mathcal{H}_{n},$ with $C_{1},$ $C_{2}$ defined in (\ref%
{a1a2}). Moreover, by Lemma \ref{mise2}, $T_{1}(h;\alpha ^{\ast
})=o_{P}(T_{2}(h;\alpha ^{\ast })),$ uniformly over $\mathcal{H}_{n}.$
Finally, recall that
\begin{equation*}
R(\theta ,h;\widetilde{\alpha }_{n})=\left[ R_{1}\left( \theta ,h;\widetilde{%
\alpha }_{n}\right) -R_{1}\left( \theta _{0},h;\widetilde{\alpha }%
_{n}\right) \right] +\left[ R_{1}\left( \theta _{0},h;\widetilde{\alpha }%
_{n}\right) -R_{1}\left( \theta _{0},h;\alpha ^{\ast }\right) \right] .
\end{equation*}
The order of $R_{1}\left( \theta _{0},h;\widetilde{\alpha }_{n}\right) $ and
$R_{1}\left( \theta _{0},h;\alpha ^{\ast }\right) $ can be obtained in the
same way as the order of $T\left( h;\alpha ^{\ast }\right) .$ Taking the
differences $R_{1}\left( \theta _{0},h;\widetilde{\alpha }_{n}\right)
-R_{1}\left( \theta _{0},h;\alpha ^{\ast }\right) $ vanishes the constants
of the dominating terms containing $h$. Thus, up to terms independent of $h$%
, the second bracket is negligible compared with $T\left( h;\alpha ^{\ast
}\right) $, uniformly in $\left\{ \widetilde{\alpha }_{n}\right\} .$ On the
other hand, by Lemma \ref{rr}, the first bracket is of order $%
o_{P}(T_{2}(h;;\alpha ^{\ast }))$, uniformly in $\theta $ in $%
O_{P}(n^{-1/2}) $ neighborhoods of $\theta _{0}$ and $h\in \mathcal{H}_{n}$
and uniformly in $\left\{ \widetilde{\alpha }_{n}\right\} $ in $o_{P}\left(
1\right) $ neighborhood of $\alpha ^{\ast }.$ Since $\widehat{\theta }$ was
shown to be $\sqrt{n}-$consistent, deduce that $\widehat{h}$ is
asymptotically equivalent to the maximizer of $T_{2}(h;\alpha ^{\ast }).$
More precisely, $\widehat{h}/h_{n}^{opt}\rightarrow 1,$ in probability,
where $h_{n}^{opt}=(C_{2}/4C_{1})^{1/5}n^{-1/5}.$

To close the proof it remains to show that $(\widehat{\theta },\widehat{h})$
defined in (\ref{deeff}) is asymptotically equivalent to the maximizer of
the objective function $\left( \theta ,h\right) \rightarrow \widehat{S}%
\left( \theta ,h;\widetilde{\alpha }_{n}\right) $ in equation (\ref{infeasi}%
). For this we use inequality (\ref{ikj}) and Lemma \ref{idic}. Moreover, we
can decompose $\widehat{S}\left( \theta ,h;\widetilde{\alpha }_{n},A^{\delta
}\right) $ in the same way as $\widehat{S}\left( \theta ,h;\widetilde{\alpha
}_{n}\right) $ and obtain the same orders, uniformly over $\Theta _{n}\times
\mathcal{H}_{n},$ uniformly with respect to $\left\{ \widetilde{\alpha }%
_{n}\right\} $ and uniformly in $\delta \in \lbrack 0,\delta
_{0}]$, for some small $\delta _{0}$. Note that $A^{\delta }$
shrinks to the set $\left\{ z:f\left( z^{T}\theta _{0};\theta
_{0}\right) =c\right\} $ as $\delta \rightarrow 0.$ Therefore,
the constants appearing in the dominating terms of the decomposition of $%
\widehat{S}\left( \theta ,h;\widetilde{\alpha }_{n},A^{\delta }\right) $
vanishes as $\delta \rightarrow 0$, provided that $P\left[ f\left(
Z^{T}\theta _{0};\theta _{0}\right) =c\right] =0$. Consequently, the $%
O_{P}\left( \cdot \right) $ orders are transformed in $o_{P}\left( \cdot
\right) $ orders and thus $\widehat{S}\left( \theta ,h;\widetilde{\alpha }%
_{n},A^{\delta }\right) =o_{P}(\widehat{S}\left( \theta ,h;\widetilde{\alpha
}_{n},A\right) ),$ uniformly in $\theta \in \Theta _{n}$, $h\in \mathcal{H}%
_{n}$ and $\widetilde{\alpha }_{n}$ a sequence convergent to $\alpha ^{\ast
},$ in probability, provided that $\delta \rightarrow 0$. The proof is
complete.
\end{proof}

\quad

\begin{center}
REFERENCES
\end{center}

\begin{description}
\item {\footnotesize \textsc{Andrews, D.W.K.} (1988). Chi-Square diagnostic
tests for econometric models: theory. \textsl{Econo\-metrica}, \textbf{56},
1419-1453. }

\item {\footnotesize \textsc{Andrews, D.W.K.} (1995). Nonparametric kernel
estimation for semiparametric models.\ \textsl{Econometric Theory}, \textbf{11}, 560-596. }

\item {\footnotesize \textsc{Cameron, A.C., and Trivedi, P.K. }(2013).\
\textit{Regression Analysis of Count Data. Second Edition}. Econometric Society Monographs,
Cambridge University Press.\ \ }


\item {\footnotesize \textsc{Cui, X., H\"{a}rdle, W. and Zhu, L.} (2011). The EFM approach for single-index models. \textsl{Ann. Statist.}, \textbf{39},  1658--1688. }

\item {\footnotesize \textsc{Delecroix, M., Hristache, M. and
Patilea, V.} (2006). On semiparametric M-estimation in
single-index regression. \textsl{J. Statist. Plann. Inference}, \textbf{136}, 730--769 }


\item {\footnotesize \textsc{Gouri\'{e}roux, C. Monfort, A. and Trognon, A.}
(1984a). Pseudo maximum likelihood methods: theory. \textsl{Econometrica},
\textbf{52}, 681-700. }

\item {\footnotesize \textsc{Gouri\'{e}roux, C. Monfort, A. and Trognon, A.}
(1984b). Pseudo maximum likelihood methods: applications to Poisson models.
\textsl{Econometrica}, \textbf{52}, 701-720. }

\item {\footnotesize \textsc{H\"{a}rdle, W., Hall, P. and Ichimura, H.}
(1993). Optimal smoothing in single-index models. \textsl{Ann. Statist.},
\textbf{21}, 157-178. }

\item {\footnotesize \textsc{H\"{a}rdle, W. and Marron, J.S.} (1985).
Optimal bandwidth selection in nonparametric regression function estimation.
\textsl{Ann. Statist.}, \textbf{13}, 1465-1481. }

\item  {\footnotesize \textsc{H\"{a}rdle, W. }and \textsc{Stoker,
T.M.} (1989). Investigating smooth multiple regression by the
method of average derivatives. \textsl{J. Amer. Statist. Assoc.},
\textbf{84}, 986-995.}

\item {\footnotesize \textsc{Ichimura, H. }(1993). Semiparametric least
squares (SLS) and weighted SLS estimation of single-index models. \textsl{J.
Econometrics}, \textbf{58}, 71-120. }

\item {\footnotesize \textsc{Johnson, N.L., Kotz, S. and Balakrishnan N.}\
(1997).\ \textit{Discrete Multivariate Distributions}. New York, John
Wiley.\ }


\item {\footnotesize \textsc{Newey, W.K. }(1993). Efficient
estimation of models with conditional moment restrictions, in
G.S.\ Maddala, C.R. Rao and H.D. Vinod (eds.) \textit{Handbook of
Statistics, vol.\ 11, }pp. 419- 454, New-York: North-Holland. }

\item  {\footnotesize \textsc{Newey, W.K.} (1994). The asymptotic
variance of semiparametric estimators. \textsl{Econometrica},
\textbf{62}, 1349-1382. }

\item {\footnotesize \textsc{Newey, W.K. and Stoker, T.M. }(1993).
Efficiency of weighted average derivative estimators and index models.
\textsl{Econometrica}, \textbf{61}, 1199-1223. }

\item {\footnotesize \textsc{Newey, W.K. and McFadden, D. }(1994).\ Large
sample estimation and hypothesis testing, in R.F.\ Engle and D.L. McFadden
(eds.) \textit{Handbook of Econometrics, vol.\ IV, }pp. 2111- 2245,
New-York: North-Holland. }

\item {\footnotesize \textsc{Pakes, A. and Pollard, D.} (1989). Simulation
and the asymptotics of optimization estimators. \textsl{Econometrica},
\textbf{57}, 1027-1057. }

\item {\footnotesize \textsc{Powell, J.L., Stock, J.M.
}and\textsc{\ Stoker, T.M.} (1989). Semiparametric estimation of
index coefficients. \textsl{Econometrica}, \textbf{57},
1403-1430.}

\item {\footnotesize \textsc{Picone, G.A. and Butler, J.S.} (2000).\
Semiparametric estimation of multiple equation models.\ \textsl{Econometric
Theory}, \textbf{16}, 551-575. }

\item {\footnotesize \textsc{Sellar, C., Stoll, J.R. and Chavas, J.P. }
(1985). Validation of empirical measures of welfare change: a comparison of
nonmarket techniques. \textsl{Land Economics}, \textbf{61}, 156-175.}

\item {\footnotesize \textsc{Sherman, R.P.} (1994a). Maximal inequalities
for degenerate $U-$processes with applications to optimization estimators.
\textsl{Ann. Statist.}, \textbf{22}, 439-459. }

\item {\footnotesize \textsc{Sherman, R.P.} (1994b). $U$-processes in the
analysis of a generalized semiparametric regression estimator. \textsl{
Econometric Theory}, \textbf{10}, 372-395. }

\item {\footnotesize \textsc{Xia, Y. and Li, W.K. }(1999). On
single-index coefficient regression models. \textsl{J. Amer.
Statist. Assoc.,} \textbf{94}, 1275-1285. }

\item {\footnotesize \textsc{Xia, Y., Tong, H. and Li, \negthinspace
W.K.\negthinspace } (1999). On extended partially linear single-index
models. \textsl{Biometrika}, \textbf{86}, 831-842. }

\item {\footnotesize \textsc{Xia, Y., Tong, H., Li, W.K.} and\ \textsc{Zhu,
L.-X.} (2002) An adaptive estimation of dimension reduction space (with
discussions). \textsl{J. R. Statist. Soc. }B, \textbf{64}, 363--410. }

\item{\footnotesize \textsc{Ziegler, A.} (2011). \textsl{Generalized Estimating Equations.} Lecture Notes in Statistics, Volume 204. New-York: Springer. }

\end{description}

{\footnotesize \ }

\end{document}